\documentclass[pagebackref,colorlinks,citecolor=blue,linkcolor=blue,urlcolor=blue,filecolor=blue]{article}
\pdfoutput=1
\usepackage[margin=1in]{geometry}
\usepackage{tikz-cd}
\usetikzlibrary{calc}
\usepackage{float}
\usepackage{amsmath}
\usepackage{amsfonts}
\usepackage{amsthm}
\usepackage{enumitem}
\usepackage{dsfont}

\usepackage{amssymb}
\usepackage{pifont}
\usepackage{mathtools}
\usepackage{comment}
\usepackage{graphicx, caption} 
\usepackage{float}
\usetikzlibrary{matrix}
\usepackage[T1]{fontenc}
\usepackage[utf8]{inputenc}

\usepackage{pinlabel} 

\usepackage{hyperref}

\newtheorem*{T1}{Theorem~\ref{rep stability for star}}
\newtheorem*{T2}{Theorem~\ref{no new relations}}
\newtheorem*{T3}{Theorem~\ref{infinite presentation for 3}}

\newtheorem{thm}{Theorem}[section]
\newtheorem{lem}[thm]{Lemma}
\newtheorem{prop}[thm]{Proposition}

\newtheorem{cor}[thm]{Corollary}
\newtheorem{exam}[thm]{Example}
\theoremstyle{definition}

\theoremstyle{definition}

\newtheorem{defn}[thm]{Definition}

\newtheorem{remark}[thm]{Remark}

\newtheorem*{claim*}{Claim}
\newtheorem*{quest*}{Question}
\newtheorem*{remark*}{Remark}
\newtheorem*{fact*}{Fact}

\newcommand{\thmtext}{For $k\ge 3$, the sequence of homology groups $H_{i}\big(F_{\bullet}(\Gamma_{k})\big)$ has the structure of a finitely generated FI$_{k,o}$-module.
For $i=1$, this module is generated in degree $4$ for $k=3$, degree $3$ for $k=4$, and degree $2$ for $k\ge 5$; for $i\neq 1$, this module is generated in degree $0$.}

\newcommand{\thmtextone}{For $k\ge 4$, the sequence of homology groups $H_{1}\big(F_{\bullet}(\Gamma_{k})\big)$ has the structure of a finitely presented FI$_{k,o}$-module, presented in degree $6$ for $k=4$, degree $5$ for $k=5$, degree $4$ for $k=6$, and degree $3$ for $k\ge 7$.}

\newcommand{\thmtexttwo}{As an FI$_{3, o}$-module $H_{1}\big(F_{\bullet}(\Gamma_{3})\big)$ is not finitely presentable.}

\newcommand{\Z}{\ensuremath{\mathbb{Z}}}

\newcommand{\R}{\ensuremath{\mathbb{R}}}

\title{Homology generators and relations for the ordered configuration space of a star graph}
\author{Nicholas Wawrykow}
\date{}

\begin{document}
\maketitle
\begin{abstract}
We study the ordered configuration spaces of star graphs.
Inspired by the representation stability results of Church--Ellenberg--Farb for the ordered configuration space of a manifold and the edge stability results of An--Drummond-Cole--Knudsen for the unordered configuration space of a graph, we determine how the ordered configuration space of a star graph with $k$ leaves behaves as we add particles at the leaves.
We show that, as a module over the combinatorial category FI$_{k, o}$, the first homology of this ordered configuration space is finitely generated by $4$ particles for $k=3$, by $3$ particles for $k=4$, and by $2$ particles for $k\ge 5$.
Additionally, we prove that every relation among homology classes can be described by relations on at most $6$ particles for $k=4$, at most $5$ particles when $k=5$, at most $4$ particles when $k=6$, and at most $3$ particles for $k\ge 7$, while proving that adding particles always introduces new relations when $k=3$.
This proves that there is no finite universal presentation for the homology of ordered configuration spaces of graphs.
\end{abstract}

\section{Introduction}

Configuration spaces of particles in manifolds have been studied for nearly a century, though it wasn't until the work of Abrams \cite{abrams2000configuration} and Ghrist \cite{Ghr01} in the late 1990s and early 2000s, that configuration spaces of graphs became a topic of focus for the mathematical community.

Recall that given a space $X$, the \emph{$n^{\text{th}}$-ordered configuration space of particles in $X$} is
\[
F_{n}(X):=\big\{(x_{1}, \dots, x_{n})\in X^{n}|x_{i}\neq x_{j}\mbox{ if }i\neq j\big\},
\]
i.e., the space of ways of embedding $n$ distinct labeled points in $X$.
The \emph{$n^{\text{th}}$-unordered configuration space of particles in $X$}, denoted $C_{n}(X)$,  is the quotient of $F_{n}(X)$ by the natural $S_{n}$-action.

There has been a surge of interest in the topology of configuration spaces in the last decade.
This is especially true in the case $X=\Gamma$ is a graph, that is a $1$-dimensional cellular complex, whose $0$-cells are called \emph{vertices}, and whose $1$-cells are call \emph{edges}.
In particular, there has been considerable effort to understand how graph configurations behave both as the number of particles in the configuration change and as the underlying graphs change.
The papers \cite{an2020edge, an2022asymptotic, aguilar2022farley, an2022second, an2021geometric, chettih2016dancing, chettih2018homology, gonzalez2022cohomology, Lut2017thesis, lutgehetmann2017representation, ramos2020application} are among the many works in these directions.
We continue these efforts, seeking to determine to how the homology of the ordered configuration space of a \emph{star graph} behaves as the number of particles in the configuration increases.

\subsection{Stability in the Particle Direction}
A common technique for determining the structure of the configuration space of particles in $X$ is to study 
how the configuration space of $X$ behaves as the number of particles in the configuration increases.
If $X$ is a connected non-compact manifold of dimension at least $2$, McDuff \cite{mcduff1975configuration} and Segal \cite{segal1979topology} noted that if $n$ is sufficiently large with respect to $i$, there is an isomorphism
\[
H_{i}\big(C_{n}(X)\big)\xrightarrow{\sim} H_{i}\big(C_{n+1}(X)\big).
\]
This \emph{homological stability} arises from stabilization maps
\[
\iota:C_{n}(X)\hookrightarrow C_{n+1}(X)
\]
that ``add a particle at infinity'' to the configuration.

In the case of the unordered configuration space of a graph, the analogous operation of adding a particle at a valence $1$ vertex, i.e., a \emph{leaf}, induces similar maps on configuration space, though such maps rarely yield homological stability results.
To ameliorate this, An--Drummond-Cole--Knudsen \cite{an2020edge} introduced \emph{edge stabilization} maps that add a new particle at the interior of an edge.
These maps induce a $\Z[E]$-module structure on $H_{i}\big(C_{\bullet}(\Gamma)\big)$, where $E$ is the set of edges of $\Gamma$.
They proved that $H_{i}\big(C_{\bullet}(\Gamma)\big)$ is finitely presented as a $\Z[E]$-module \cite[Theorem 1.1]{an2020edge}, and they proved that the rank of $H_{i}\big(C_{n}(\Gamma)\big)$ is a polynomial in $n$ of degree determined by a connectivity invariant of $\Gamma$ \cite[Theorem 1.2]{an2020edge}.

In the case of ordered configuration spaces, the labeling of the particles often precludes a notion of homological stability.
However, this labeling induces an $S_{n}$-representation structure on $H_{i}\big(F_{n}(X)\big)$.
Church--Ellenberg--Farb proved that if one takes this structure into account and $X$ is a connected non-compact finite type manifold of dimension at least $2$, then $\Big\{H_{i}\big(F_{n}(X)\big)\Big\}_{n\ge 0}$ stabilizes as a sequence of symmetric group representations, as it forms a finitely generated free module over the combinatorial category FI \cite[Theorem 6.4.3]{church2015fi}.
Moreover, this \emph{representation stability} also arises from the map on configuration space that ``adds a particle at infinity.''

Inspired by the work of An--Drummond-Cole--Knudsen and Church--Ellenberg--Farb, we approach the problem of representation stability for the family of star graphs.
The \emph{star graph} $\Gamma_{k}$ consists of a central vertex with $k$ surrounding vertices, each connected by a single edge to the central vertex; see Figure \ref{star3} for an example.
In the case of the unordered configuration space of an arbitrary graph $\Gamma$, An--Drummond-Cole--Knudsen proved that as $n$ tends to infinity the vast majority of all homology classes of $F_{n}(\Gamma)$ arise from the inclusions of star graphs into $\Gamma$, i.e., most classes in $H_{i}\big(C_{n}(\Gamma)\big)$ are products of star classes \cite[Theorem 1.1]{an2022asymptotic}.
An argument analogous to the one given by An--Drummond-Cole--Knudsen in \cite{an2020edge} proves that for each $n$, the $k$ maps that push the particles in a configuration lying on the $j^{\text{th}}$-edge towards the central vertex and add a particle labeled $n+1$ at the $j^{\text{th}}$-leaf 
\[
\iota_{n,j}:F_{n}(\Gamma_{k})\hookrightarrow F_{n+1}(\Gamma_{k})
\]
induce maps on homology that give $H_{i}\big(F_{\bullet}(\Gamma_{k})\big)$ the structure of a module over the combinatorial category FI$_{k,o}$.
We prove that $H_{i}\big(F_{\bullet}(\Gamma_{k})\big)$ is finitely generated as an FI$_{k,o}$-module, a relative of the FI$_{d}$-modules studied by Ramos in \cite{ramos2017generalized}, and calculate its generation degree as such.

\begin{thm}\label{rep stability for star}
\thmtext
\end{thm}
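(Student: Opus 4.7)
The plan is to reduce to a $1$-dimensional combinatorial model for $F_n(\Gamma_k)$ and then handle the three cases $i>1$, $i=0$, and $i=1$ in order of difficulty. The key input is Abrams' discretization: since $\Gamma_k$ has a unique essential vertex (the central vertex of valence $k\ge 3$), for every $n$ the space $F_n(\Gamma_k)$ deformation retracts, after a suitable subdivision, onto an ordered discrete configuration space $D_n(\Gamma_k)$ which is a CW complex of dimension at most $1$. This immediately forces $H_i\big(F_n(\Gamma_k)\big)=0$ for $i>1$, so in those degrees the FI$_{k,o}$-module is zero and is generated in degree $0$ trivially. For $i=0$, inspection of the discrete model shows $D_n(\Gamma_k)$ is path-connected for all $n\ge 1$, so $H_0\cong\Z$ and the stabilization maps $i_{n,j}$ are isomorphisms on $H_0$; the generator at $n=1$ therefore propagates to every higher degree, and $H_0\big(F_\bullet(\Gamma_k)\big)$ is generated in degree $1$ as an FI$_{k,o}$-module.

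The substantive work is the case $i=1$. I would fix an explicit $1$-dimensional chain model for $D_n(\Gamma_k)$ (a \emph{Świątkowski}-style complex is convenient) and identify a finite family of $1$-cycles whose images span $H_1$. For a star, the only $1$-cells in the discrete model that are not obviously boundaries are those that transport a labeled particle across the central vertex from one branch to another, so every $1$-cycle can be read as a round-trip motion in which a small number of ``active'' particles execute a loop at the center while the remaining ``passive'' particles sit at leaves. Such a cycle is the image, under an FI$_{k,o}$-map that adjoins passive particles at unused leaves, of the analogous cycle involving only the active particles together with whatever particles are forced by the combinatorics of the move. The theorem then reduces to two claims: (a) there exists a finite set of $1$-cycles generating $H_1\big(F_\bullet(\Gamma_k)\big)$, each supported on at most $m_k$ particles, where $m_3=4$, $m_4=3$, $m_{\ge 5}=2$; and (b) every $1$-cycle on $F_n(\Gamma_k)$ is homologous to a combination of FI$_{k,o}$-images of these basic loops. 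The values $m_k$ come from a direct count of how many branches at the central vertex must be occupied or kept free in order to make a loop non-null-homologous: the more branches are available, the fewer particles are required.

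The step I expect to be the main obstacle is the chain-level reduction in (b), particularly for $k=3$. When $k$ is small there are very few free branches at the central vertex, so moving a passive particle out of the way risks changing the homology class of the cycle, and a priori one cannot simply ``de-stabilize'' a parked particle. I would handle $k=3$ by hand, using the explicit combinatorics of the discrete model on $\Gamma_3$ to produce an inductive reduction that peels off a parked particle while controlling the resulting difference in $H_1$, arriving at the bound $4$. For $k\ge 4$, the presence of additional branches makes the reduction uniform: one can always park a spare particle at an otherwise unused leaf and then de-stabilize it, yielding the sharper bounds $m_4=3$ and $m_{\ge 5}=2$.
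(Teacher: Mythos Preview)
Your overall strategy is different from the paper's and plausible in outline, but the decisive step---your claim (b)---is not actually carried out, and sharpness is not addressed at all. Let me compare and then point to the gaps.

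\textbf{Comparison with the paper.} The paper does not work at the chain level. Instead it covers $F_{n+1}(\Gamma_k)$ by the $k(n+1)$ open sets $U'_{i,j}$ consisting of configurations in which particle $j$ is outermost on edge $i$; each $U'_{i,j}$ deformation retracts to a copy of $F_n(\Gamma_k)$. The Mayer--Vietoris spectral sequence for this cover has $E^1_{0,1}$ equal to a direct sum of copies of $H_1\big(F_n(\Gamma_k)\big)$, so the assertion ``$H_1\big(F_{n+1}(\Gamma_k)\big)$ is generated by stabilizations of $H_1\big(F_n(\Gamma_k)\big)$'' is exactly the vanishing of $E^2_{1,0}$. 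Since all nonempty intersections of the cover are connected, $E^2_{1,0}\cong H_1\big(N(\mathcal{U}_{k,n+1})\big)$, the first homology of the nerve. The paper then proves a purely combinatorial lemma: every length-$3$ path in the $1$-skeleton of this nerve is homotopic rel endpoints to a length-$2$ path once $n\ge 4$ (for $k=3$), $n\ge 3$ (for $k=4$), or $n\ge 2$ (for $k\ge 5$). This collapses all $1$-cycles and gives the upper bounds uniformly. Sharpness is then checked separately via Ghrist's rank formula for $\pi_1\big(F_n(\Gamma_k)\big)$ and an Euler-characteristic count of the nerve. The payoff of this route is twofold: it converts the reduction into a finite check on five explicit path-types, and the same nerve machinery immediately feeds into the presentation-degree Theorems~\ref{no new relations} and~\ref{infinite presentation for 3} by looking at $H_2$ of the nerve.

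\textbf{Gaps in your proposal.} Your step (b) is precisely where the content lies, and you have not supplied an argument---only the intuition that more branches mean fewer obstructions. The difficulty you flag for $k=3$ is real: with only three branches, peeling off a parked particle can genuinely alter the homology class, and ``handle by hand'' is not a proof. You would need, at minimum, a normal form for $1$-cycles in the \'Swi\k{a}tkowski model together with an explicit list of rewriting moves that reduce the number of active particles, and a verification that these moves terminate at the claimed $m_k$. The paper's nerve argument is exactly such a systematic rewriting procedure, packaged differently. Second, you do not address sharpness: the theorem asserts generation \emph{in} degree $m_k$, not merely \emph{at most} $m_k$, and this requires exhibiting classes in $H_1\big(F_{m_k}(\Gamma_k)\big)$ that are not in the image of the stabilization maps. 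The paper does this by rank counts; your sketch has no analogue.
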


Notably, this representation stability result proves that for $n=3,4$, there are classes in $H_{1}\big(F_{n}(\Gamma_{3})\big)$ that cannot be viewed as a sum of a product of a class of $H_{1}\big(F_{n-1}(\Gamma_{3})\big)$ with a copy of the fundamental class of $H_{0}\big(F_{1}(\R)\big)$.
This differs from the case of the first homology of the unordered configuration space of $\Gamma_{3}$, which An--Drummond-Cole--Knudsen's results prove is freely generated by the fundamental class of $H_{1}\big(C_{2}(\Gamma_{3})\big)$ and the $\Z[E]$-module structure on $H_{1}\big(C_{\bullet}(\Gamma_{3})\big)$.
Thus, we see that ordered configuration spaces of graphs are more complex than their unordered counterparts, even in the simplest case.

Our proof of Theorem \ref{rep stability for star} also provides an upper bound of the presentation degree of $H_{1}\big(C_{\bullet}(\Gamma_{k})\big)$ as an FI$_{k,o}$-module.

\begin{thm}\label{no new relations}
\thmtextone
\end{thm}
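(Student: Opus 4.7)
The plan is to upgrade the generation bound of Theorem \ref{rep stability for star} to a presentation bound by exhibiting an explicit set of relations and checking that it generates the kernel of the natural surjection from the free FI$_{k,o}$-module on the generators onto $H_{1}\big(F_{\bullet}(\Gamma_{k})\big)$. Since the generation degree is $3$ for $k=4$ and $2$ for $k\ge 5$, the free module is easy to describe, and the work is entirely in bounding the degree in which the relation module is itself generated as an FI$_{k,o}$-module.

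My approach would be to model $F_{n}(\Gamma_{k})$ by Abrams's discretized configuration space and choose a Farley--Sabalka-style discrete Morse function compatible with the FI$_{k,o}$-action of adding particles at leaves. The critical $1$-cells correspond to Y-exchanges at the central vertex together with a distribution of the remaining particles among the leaves; these are precisely the cycles witnessing Theorem \ref{rep stability for star}. The critical $2$-cells then provide a natural candidate list of relations. I expect two families to appear: \textbf{commutation relations} between Y-exchanges involving disjoint labels, which are already visible on three particles, and \textbf{spectator relations} describing how a Y-exchange is affected by an extra particle pinned at a third leaf. When $k\ge 6$, there are always enough empty leaves that the FI$_{k,o}$-action can reposition such a spectator onto a fresh leaf, so every relation descends from one on three particles; this yields the stated bound of~$3$.

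For $k=4,5$ the number of leaves is too small to absorb spectators freely: particles can occupy every leaf simultaneously, and a relation involving three Y-exchanges interacting with a common background may genuinely require up to five particles to write down. The main obstacle will be the verification in these low-$k$ cases. I would enumerate the critical $2$-cells appearing for $n\le 5$, check by direct chain-level computation that together with the FI$_{k,o}$-translates of the lower-degree relations they span the kernel, and then invoke Noetherianity of finitely generated FI$_{k,o}$-modules (paralleling Ramos's treatment of FI$_{d}$-modules in \cite{ramos2017generalized}) to rule out hidden relations creeping in at higher $n$. The bulk of the labor will be the combinatorial bookkeeping ensuring no critical $2$-cell on five particles is overlooked and that the proposed relations really do close up under the FI$_{k,o}$-action to account for everything in the kernel.
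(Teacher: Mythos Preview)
Your proposal has a fatal gap: the appeal to Noetherianity of finitely generated FI$_{k,o}$-modules is false. The paper's Proposition~\ref{not Noetherian} shows that FI$_{1,o}$ is not Noetherian, and more directly Theorem~\ref{infinite presentation for 3} exhibits $H_{1}\big(F_{\bullet}(\Gamma_{3})\big)$ as a finitely generated but \emph{not} finitely presented FI$_{3,o}$-module; if Noetherianity held, finite generation would force finite presentation. Ramos's argument for FI$_{d}$ does not transfer: the ordering on each color class inflates the morphism sets from $\binom{n}{m}d^{\,n-m}$ to $n!\binom{n-m+d-1}{d-1}$ (Proposition~\ref{size of a free FIOk}), and this destroys the Gr\"obner machinery. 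Thus verifying that your relations span the kernel for $n\le 5$ says nothing about $n>5$; indeed your argument, if it worked, would apply verbatim to $k=3$ and contradict Theorem~\ref{infinite presentation for 3}. There is also a secondary problem: for a star graph the discretized configuration space (Abrams's or L\"{u}tgehetmann's) is $1$-dimensional, since only one particle can interact with the single essential vertex at a time, so there are no $2$-cells and hence no critical $2$-cells to enumerate. The FI$_{k,o}$-relations you are after are identities among stabilization images across different $n$, not $2$-cells in any fixed $F_{n}(\Gamma_{k})$.

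The paper avoids Noetherianity entirely by proving the relevant vanishing \emph{for all large $n$ simultaneously}. It identifies the source of new relations with $E^{2}_{2,0}[\Gamma_{k}](n+1)=H_{2}\big(N(\mathcal{U}_{k,n+1})\big)$ in the Mayer--Vietoris spectral sequence for the cover of $F_{n+1}(\Gamma_{k})$ by the $k(n+1)$ subspaces $U'_{i,j}\simeq F_{n}(\Gamma_{k})$, and then shows this $H_{2}$ vanishes by covering the nerve itself by stars of the vertices along a fixed edge and running a second Mayer--Vietoris argument (Lemma~\ref{second homology of nerve}); the intersections of these stars are lower nerves $N(\mathcal{U}_{k-1,m})$ whose $H_{1}$ is already known to vanish by Lemma~\ref{contract 1-cycles}. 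The method succeeds precisely because it gives a uniform structural reason for the vanishing rather than a degree-by-degree check backed by a Noetherianity principle that is unavailable here.
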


This finite presentability result for the ordered configuration space of a star graph on at least $4$ edges is in line with An--Drummond-Cole--Knudsen's edge stability results for the unordered configuration space of a graph \cite[Theorem 1.1]{an2020edge} and Church--Ellenberg-Farb's representation stability results for the ordered configuration space of a connected non-compact finite type manifold of dimension at least 2 \cite[Theorem 6.4.3]{church2015fi}.
We prove that the case $k=3$ is special, deviating from this trend.
Namely, we show that $H_{1}\big(F_{\bullet}(\Gamma_{3})\big)$ is not finitely presentable as an FI$_{3,o}$-module.

\begin{thm}\label{infinite presentation for 3}
\thmtexttwo
\end{thm}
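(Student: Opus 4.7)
My plan is to exhibit an explicit infinite family of relations in $H_{1}\big(F_{\bullet}(\Gamma_{3})\big)$ viewed as an FI$_{3,o}$-module, and to argue that no finite subset of them generates the full kernel of any presentation, thereby contradicting finite presentability.

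First, I would make concrete the discrete chain-complex model of An--Drummond-Cole--Knudsen, specialized to the ordered configuration space of the tripod. Since $F_{n}(\Gamma_{3})$ deformation retracts onto a $1$-complex (so $H_{i}$ vanishes for $i\ge 2$ and $H_{1}$ is free abelian), this yields a transparent description of $H_{1}\big(F_{n}(\Gamma_{3})\big)$: the generators are ``Y-classes'' at the central vertex, indexed by a choice of two particles to swap together with a distribution of the remaining $n-2$ particles among the three leaves, and the relations among them are recorded by explicit boundary $2$-chains in the model.

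Second, I would construct a relation $\rho_{n}\in H_{1}\big(F_{n}(\Gamma_{3})\big)$ for each $n\ge N$ arising from a $2$-chain that genuinely involves all $n$ particles. The geometric intuition is that with only three arms there is very little room to maneuver once $n$ is large; any $2$-cell whose boundary is a non-trivial relation among Y-classes must then coordinate the positions of every particle in the configuration. I expect a suitable $\rho_{n}$ to be supported on configurations where the leaves are densely packed, so that the relation cannot be separated from the specific labels of the particles on those leaves.

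Third, and this is the main obstacle, I need to show each $\rho_{n}$ is not an FI$_{3,o}$-consequence of $\{\rho_{m}:m<n\}$ together with any other finite collection of relations of bounded degree. My plan is to define an FI$_{3,o}$-equivariant invariant of each relation --- its ``essential support'' after reduction --- and prove the invariant of $\rho_{n}$ strictly exceeds that of every FI$_{3,o}$-induction from a lower degree. The invariant must be compatible with both the operation of adding particles at a chosen leaf and with permutations within each leaf. Carrying out this non-derivability argument is the heart of the proof; I expect the tripod's restricted geometry to play an essential role, since for $k\ge 4$ an extra arm provides enough room that the analogous relations reduce to ones already visible at lower degrees, which is precisely why Theorem \ref{no new relations} holds there but fails here.
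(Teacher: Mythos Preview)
Your proposal is a strategy outline rather than a proof, and the outline has two genuine gaps.

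First, the heart of your argument---step three---is not carried out. You say you will define an FI$_{3,o}$-equivariant ``essential support'' invariant and use it to show that $\rho_{n}$ is not a consequence of relations of lower degree, but no such invariant is defined and no non-derivability argument is given. This is precisely the hard part: the FI$_{3,o}$-structure maps add particles at leaves and permute labels, so any invariant must be stable under both operations, and it is not at all clear what ``essential support after reduction'' should mean or why it would distinguish $\rho_{n}$ from FI$_{3,o}$-pushforwards of lower-degree relations. Without this, nothing has been proved.

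Second, even granting that the kernel of \emph{your} chosen surjection $M^{\text{FI}_{3,o}}(W)\twoheadrightarrow H_{1}\big(F_{\bullet}(\Gamma_{3})\big)$ is not finitely generated, this does not by itself preclude a finite presentation with a \emph{different} finite generating set $W'$. One needs a Schanuel-type lemma: if some finite generating set has finitely generated kernel, then every finite generating set does. This is Proposition~\ref{all presentations are the same} in the paper, and it relies on projectivity of free FI$_{3,o}$-modules. Your proposal does not address this step.

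The paper's proof sidesteps both difficulties. Rather than constructing explicit relations and an ad hoc invariant, it reuses the Mayer--Vietoris spectral sequence for the cover $\mathcal{U}_{3,n+1}$ already set up in the proof of Theorem~\ref{rep stability for star}. The relations at level $n+1$ not inherited from level $n$ are exactly measured by $E^{2}_{2,0}[\Gamma_{3}](n+1)=H_{2}\big(N(\mathcal{U}_{3,n+1})\big)$ via the $d^{2}$-differential, and a straightforward Euler-characteristic computation on the nerve (using Lemma~\ref{contract 1-cycles} for $H_{1}$) gives $E^{2}_{2,0}[\Gamma_{3}](n)\cong\Z^{n^{3}-6n^{2}+8n-1}$, nonzero for all large $n$. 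The spectral sequence thus replaces your undefined invariant entirely: it already separates ``relations inherited from degree $n$'' (recorded in $E^{2}_{0,1}$) from ``genuinely new relations'' (coming from $E^{2}_{2,0}$). Proposition~\ref{kernel is not finitely generated} packages this into the statement that the kernel for the natural generating set is not finitely generated, and Proposition~\ref{all presentations are the same} then transfers this to every finite generating set.
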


This is in stark contrast to previous results, suggesting that the limited $1$-dimensional cell structure of $\Gamma_{3}$ greatly increases the complexity of its ordered configuration space.
This proves that there is no finite universal presentation for the homology of the ordered configuration space of graphs, an open problem in the unordered case; see \cite[Remark 3.14]{an2020edge} for more on this problem.

\begin{figure}[h]
\centering
\captionsetup{width=.8\linewidth}
\includegraphics[width = 2cm]{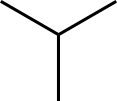}
\caption{The star graph $\Gamma_{3}$.
}
\label{star3}
\end{figure}

\subsection{Structure of the Paper}
In section \ref{FIOk} we recall the categories FI and FI-mod, as well as the representation stability results of Church--Ellenberg--Farb for ordered configuration spaces of points in manifolds.
We introduce the category FI$_{d,o}$, an ordered version of the category FI$_{d}$, that we will use to formalize our representation stability results.
Next, in section \ref{cube complex}, we recall the definition of a cubical complex defined by L\"{u}tgehetmann that is homotopy equivalent to the ordered configuration space of a graph.
After that, we recall the Mayer--Vietoris spectral sequence in section \ref{mayer vietoris}, which, along with the cubical complex of section \ref{cube complex}, will be central to our generation and presentation arguments.
In section \ref{rep stab section}, we consider generation of $H_{1}\big(F_{\bullet}(\Gamma_{k})\big)$ as an FI$_{k, o}$-module, and we prove Theorem \ref{rep stability for star} by finding a ``good'' cover for L\"{u}tgehetmann's cube complex and analyzing the first row of the Mayer--Vietoris spectral sequence with respect to this cover.
Next, in section \ref{presentability} we prove that if $k\ge 4$, then $H_{1}\big(F_{\bullet}(\Gamma_{k})\big)$ is a finitely presentable FI$_{k,o}$-module, i.e., Theorem \ref{no new relations}, whereas $H_{1}\big(F_{\bullet}(\Gamma_{3})\big)$ is not finitely presentable as an FI$_{3, o}$-module, i.e., Theorem \ref{infinite presentation for 3}.
Finally, the paper concludes with Section \ref{appendix}, a short appendix in which we illustrate the homology generators of $H_{1}\big(F_{\bullet}(\Gamma_{k})\big)$.\\

\subsection{Acknowledgements}
The author would like to thank Shmuel Weinberger for posing a question that lead the author to consider graph configuration spaces.
The author would like to thank Benson Farb, Ben Knudsen, Andrew Snowden, and Jennifer Wilson for insightful conversations, as well as Jes\'{u}s Gonz\'{a}lez and Tomi Rossini for comments on a draft of this paper. 
Additionally, the comments of an anonymous referee were of great help in improving the clarity and correctness of the paper.

\section{FI$_{d,o}$}\label{FIOk}

Church--Ellenberg--Farb used the combinatorial category FI to state their representation stability results for connected non-compact finite type manifolds of dimension at least $2$. 
In this section we recall the definitions of the category FI and of FI-modules, and introduce generalizations, which we will call FI$_{d,o}$ and FI$_{d,o}$-modules, that we will use to state our representation stability results.

Let \emph{FB} denote the category whose objects are finite sets and whose morphisms are bijections.
An \emph{FB-module} over a commutative ring $R$ is a covariant functor from FB to the category of $R$-modules, and an \emph{FB-space} is a covariant functor from FB to the category of spaces.
Since every finite set $A$ is isomorphic to a set of the form $[n]:=\{1, \dots, n\}$ for some $n\in \Z_{\ge 0}$, it follows that FB is equivalent to its skeleton, which has one object $[n]$ for each $n\in \Z_{\ge 0}$.
As such, we can interpret an FB-module $W$ as a sequence $(W_{n})_{n\in \Z_{\ge0}}$ of symmetric group representations.

If we allow injections between sets of different cardinalities in FB, we get the category \emph{FI} of finite sets and injections.
An \emph{FI-module} over a commutative ring $R$ is a covariant functor from FI to the category of $R$-modules, and an \emph{FI-space} is a covariant functor from FI to the category of topological spaces.
As is the case for FB, the category FI is equivalent to its skeleton, which has one object $[n]$ for each $n\in \Z_{\ge 0}$.

\begin{exam}
If $X$ is a non-compact finite type manifold of dimension at least $2$, then $F_{\bullet}(X)$ is an FI-space, where the standard inclusion $\iota_{n}:[n]\hookrightarrow[n+1]$ induces a map that sends $F_{n}(X)$ to $F_{n+1}(X)$ by ``adding a particle labeled $n+1$ at infinity''; see Figure \ref{torusconfiginclusion}.
It follows that $H_{i}\big(F_{\bullet}(X)\big)$ has the structure of an FI-module where the inclusion map $\iota_{n}$ induces a map from $H_{i}\big(F_{n}(X)\big)$ to $H_{i}\big(F_{n+1}(X)\big)$ that tensors a class in $H_{i}\big(F_{n}(X)\big)$ with the fundamental class of $H_{0}\big(F_{1}(\R^{n})\big)$.
\end{exam}

\begin{figure}[h]
\centering
\captionsetup{width=.8\linewidth}
\includegraphics[width = 12cm]{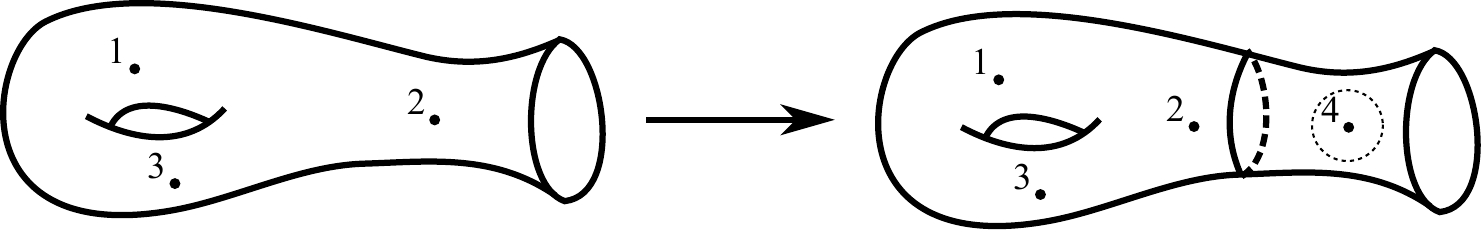}
\caption{The action of the inclusion map on a point in $F_{3}(T^{\circ})$, where $T^{\circ}$ is the once-punctured torus.
This map is induced by a map on the torus that retracts the torus away from the puncture; note that in the coimage of this retraction, which is a tubular neighborhood of the puncture, one can embed a copy of $\R^{2}$ (see the dotted circle).
To get a point in $F_{4}(T^{\circ})$ from a point in $F_{3}(T^{\circ})$ first apply this retraction to the three particles constituting this point; then add a new particle centered at the image of the origin of the embedded $\R^{2}$.}
\label{torusconfiginclusion}
\end{figure}

We say that an FI-module $V$ is \emph{generated} by a set $S\subseteq \sqcup_{n\ge 0}V_{n}$ if $V$ is the smallest FI-submodule containing $S$. 
If there is some finite set $S$ that generates $V$, then $V$ is \emph{finitely generated}, and if $V$ is generated by $\bigsqcup_{0\le n\le m}V_{n}$, then $V$ is said to be \emph{generated in degree at most $m$}.

Church--Ellenberg--Farb proved that the homology groups of the ordered configuration space of a connected non-compact manifold of dimension at least $2$ stabilize in a representation theoretic sense as they are finitely generated FI-modules.

\begin{thm}\label{manifold rep stab}
(Church--Ellenberg--Farb \cite[Theorem 6.4.3]{church2015fi} for oriented manifolds, Miller--Wilson \cite[1.1]{miller2019higher} for the general case)
Let $M$ be a connected non-compact finite type manifold of dimension at least $2$, then $H_{i}\big(F_{\bullet}(M)\big)$ has the structure of a finitely generated free FI-module generated in degree at most $2i$.
\end{thm}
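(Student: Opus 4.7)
The plan is to establish the finite generation of $H_{i}\big(F_{\bullet}(M)\big)$ as an FI-module by analyzing a spectral sequence converging to this homology whose $E^{2}$-page admits an explicit description in terms of $H_{\ast}(M^{n})$ and the homology of configuration spaces in $\R^{d}$, where $d=\dim M$. First I would pin down the FI-module structure: since $M$ is connected, non-compact and $d\ge 2$, I can choose a proper ray $\gamma:[0,\infty)\hookrightarrow M$ and define the stabilization $F_{n}(M)\hookrightarrow F_{n+1}(M)$ by isotoping the existing $n$ particles into a compact subset of $M$ and placing the new labeled particle at a far-out point of $\gamma$; a routine check shows these maps assemble, up to homotopy, into a functor from FI to spaces, so applying $H_{i}(-)$ yields the claimed FI-module structure on $H_{i}\big(F_{\bullet}(M)\big)$.

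The main technical tool is the Totaro/Fulton--MacPherson-type homology spectral sequence associated to the open inclusion $F_{n}(M)\hookrightarrow M^{n}$ (equivalently, the appropriate Leray-style spectral sequence in homology). Its $E^{2}$-page decomposes as a direct sum, indexed by set partitions $\pi$ of $[n]$, of terms of the form $H_{p}\big(M^{[n]/\pi}\big)\otimes H_{q}\big(F_{\pi}^{\mathrm{loc}}\big)$, where $F_{\pi}^{\mathrm{loc}}$ is a product of configuration spaces in $\R^{d}$ recording the local collision data and $p+q=i$ on the relevant anti-diagonal. The stabilization acts on the $E^{2}$-page by adjoining a singleton block to $\pi$, which tensors the $M^{[n]/\pi}$-factor with $H_{0}(M)=\Z$ and the local factor with $H_{0}(F_{1}(\R^{d}))=\Z$; this lifts the FI-module structure from the abutment to the $E^{2}$-page compatibly with the differentials. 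By Arnold's calculation, $H_{q}\big(F_{\bullet}(\R^{d})\big)$ is a free FI-module generated in degree at most $2q$, so each $E^{2}$-summand inherits a finitely generated free FI-module structure in degree at most $2q\le 2i$.

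Summing these contributions across partitions and bidegrees with $p+q=i$, and invoking closure of finitely generated FI-modules under subquotients, I would conclude that $H_{i}\big(F_{\bullet}(M)\big)$ is finitely generated in degree at most $2i$. For the freeness claim, in the rational oriented case one uses the weight/Hodge-type argument of Church--Ellenberg--Farb showing that the relevant differentials vanish on the pieces that contribute in total degree $i$; for the integral and non-orientable case, one substitutes the refined cellular filtration of Miller--Wilson, which keeps the FI-equivariance and identifies the surviving subquotients as induced representations of the form $M(W)$.

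The main obstacle is making the FI-module structure on the spectral sequence truly natural and tracking freeness through the edge effects: the stabilization must be shown to land, up to differential, in the summand where the new label forms its own singleton block, so that the sharp bound $2i$ is achieved rather than a weaker bound such as $2i+d$. A secondary obstacle is the non-orientable case, since one must then work with twisted coefficients on $M^{n}$ whose interaction with the $S_{n}$-action is delicate; this is precisely the gap closed by Miller--Wilson's higher representation stability machinery, which I would invoke as a black box.
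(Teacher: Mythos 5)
The paper does not prove this theorem: it is stated as a citation of Church--Ellenberg--Farb and Miller--Wilson, and no argument appears in the text. So there is no in-paper proof against which to compare your sketch; what follows is an assessment of the sketch on its own terms and against the cited sources.

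Your outline is a plausible route, but it is genuinely different from what the cited papers do. Church--Ellenberg--Farb establish finite generation by showing that $F_{\bullet}(M)$ is a \emph{homotopy finitely generated FI-space} and then invoking the Noetherian property of FI-modules over a Noetherian ring; Miller--Wilson work with an explicit cellular/arc-complex filtration of a configuration space model (not the Totaro/Fulton--MacPherson compactification) to obtain higher-order stability and, in particular, freeness without orientability hypotheses. Neither proof goes through the Leray/Totaro spectral sequence for $F_{n}(M)\hookrightarrow M^{n}$. That spectral sequence is a reasonable engine in the compact oriented setting, and your observation that the local factor $H_{q}\bigl(F_{\bullet}(\R^{d})\bigr)$ is a free FI-module generated in degree at most $2q$ is correct; but you should flag that the clean partition-indexed $E^{2}$-description you invoke uses orientation (or a choice of twisting), and that for non-compact $M$ the construction and convergence of this spectral sequence need care that Totaro's original setup does not supply.

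Two more substantive gaps. First, your reduction for the non-orientable and integral case to "Miller--Wilson's machinery as a black box" is circular in this context, since Miller--Wilson is precisely one of the two references being cited for the theorem itself; if you are invoking it, you have not proved the statement but merely re-cited it. Second, and more importantly, the freeness claim does not follow from what you have laid out. Knowing that each $E^{2}$-summand is a free (even finitely generated free) FI-module, and then passing to subquotients via the differentials, only gives finite generation: subquotients of free FI-modules need not be free. The weight/Hodge-type degeneration argument you gesture at is the right idea in the rational compact setting, but you would need to actually prove that the relevant differentials vanish FI-equivariantly and that the abutment splits as a free module, not merely that each page is a sum of free pieces. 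The edge-effects issue you raise is real but secondary compared to this. So your sketch is a reasonable heuristic for why a theorem of this shape should hold, but it is neither the argument of the cited sources nor, as written, a complete proof.
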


The homology of the ordered configuration space of particles on a graph rarely has the structure of a finitely generated FI-module.
Unlike the ordered configuration space of a manifold of dimension at least $2$, the order in which we add particles ``at infinity'' to the ordered configuration space of a graph matters.
Moreover, for graphs, the ``infinity,'' i.e., leaf, of a graph to which one adds new particles matters: One can check that all the classes arising from adding particles on different edges to the star class on two particles described in subsection \ref{star class on 2 particles} are linearly independent, whereas the analogous statement for manifolds of dimension at least $2$ is false.
Therefore, we need a new combinatorial category that takes into account the order in which we add particles and the various places we can add them.

For fixed $d\ge 1$, let $\emph{FI}_{d}$ denote the category whose objects are finite sets and whose morphisms $(f, c)$ are injections $f$ with a $d$-coloring $c$ of the complement of the image of $f$.
This category, which was studied by Ramos in \cite{ramos2017generalized}, allows one to take into account the different ways (edges) in which we add a new particle; however, it does not remember the order in which we added them.
To include this information, we introduce a new combinatorial category.

\begin{defn}
Let \emph{FI$_{d,o}$} denote the category whose objects are finite sets and whose morphisms $(f,c,o)$ are injections $f$ with a $d$-coloring $c$ of the complement of the image of $f$ and an ordering $o$ for each set of elements of the same color.
We can compose FI$_{d,o}$-morphisms in the following way: if the codomain of an injection $f$ is the domain of an injection $f'$, we compose $(f,c,o):[n]\to [m]$ and $(f', c', o'):[m]\to [l]$ by setting
\[
(f', c', o')\circ (f, c, o)=(f'\circ f, c'', o''),
\]
where for $x\notin\text{Im}(f'\circ f)$
\[
c''(x)=
\begin{cases} c(x)&\mbox{if }x\in \text{Im}f'\\
c'(x)&\mbox{if }x\notin \text{Im}f'
\end{cases}\indent\text{and}\indent
o''(x)=\begin{cases}o(x)&\mbox{if }x\in \text{Im}f'\\
o'(x)+(n-m)&\mbox{if }x\notin \text{Im}f'.
\end{cases}
\]
See Figure \ref{compFIO2morph} for an example.

Additionally, an \emph{FI$_{d,o}$-module} over a ring $R$ is a covariant functor from FI$_{d,o}$ to $R$-mod, and an \emph{FI$_{d,o}$-space} is a covariant function from FI$_{d,o}$ to the category of topological spaces.
\end{defn}

\begin{figure}[h]
\centering
\captionsetup{width=.8\linewidth}
\includegraphics[width = 8cm]{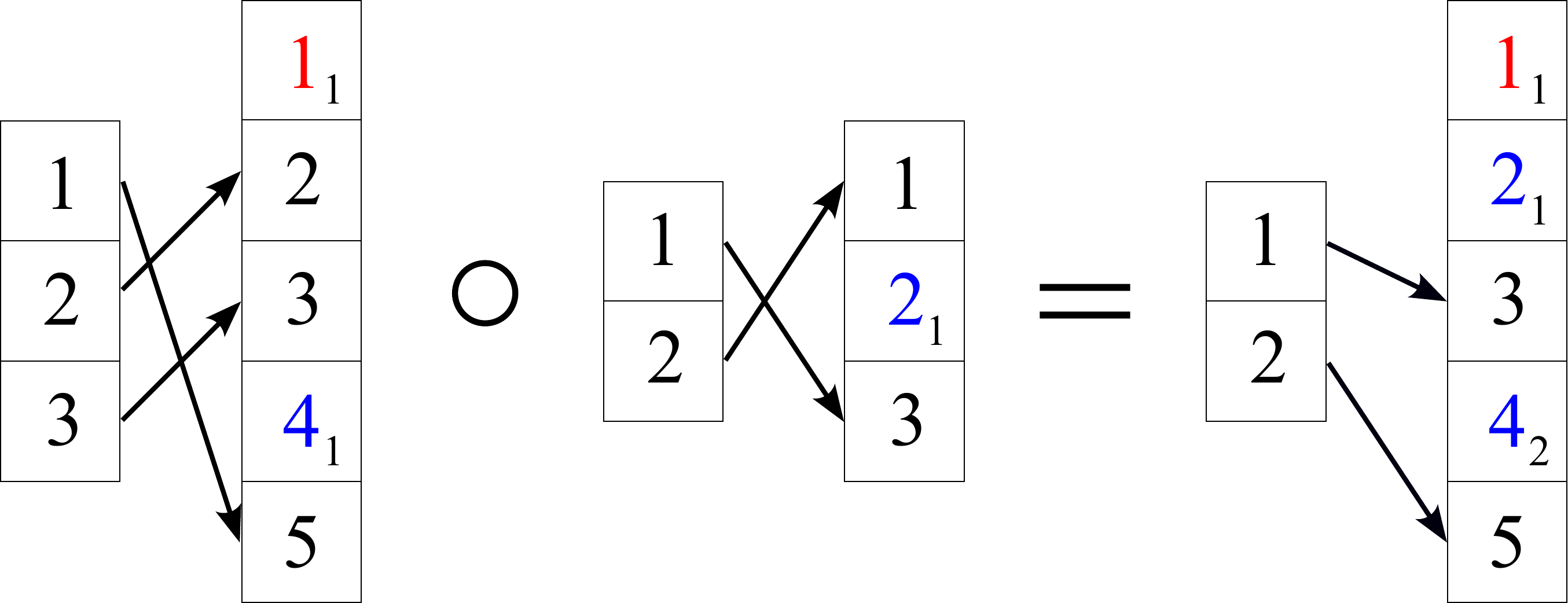}
\caption{The composition of two FI$_{2,o}$-morphisms.
Here the ordering is denoted by the subscripts.
}
\label{compFIO2morph}
\end{figure}

Note that every morphism $(f, c, o):[n]\to [m]$ can be written
\[
(f, c, o)=\sigma\circ (\iota_{m-1}, c_{j_{m}}, o_{j_{m}})\circ\cdots\circ (\iota_{n}, c_{j_{n+1}}, o_{j_{n+1}}),
\]
where $\sigma\in S_{m}$, $\iota_{l}:[l]\to [l+1]$ is the standard inclusion of $[l]$ into $[l+1]$, $c_{j_{l+1}}$ colors the element $l+1$ color $j_{l+1}$, and $o_{j_{l+1}}$ orders the element $l+1$ first among the elements colored $j_{l+1}$ as it is the only element in the complement; see Figure \ref{FIO2decompmorph}.

\begin{figure}[h]
\centering
\captionsetup{width=.8\linewidth}
\includegraphics[width = 12cm]{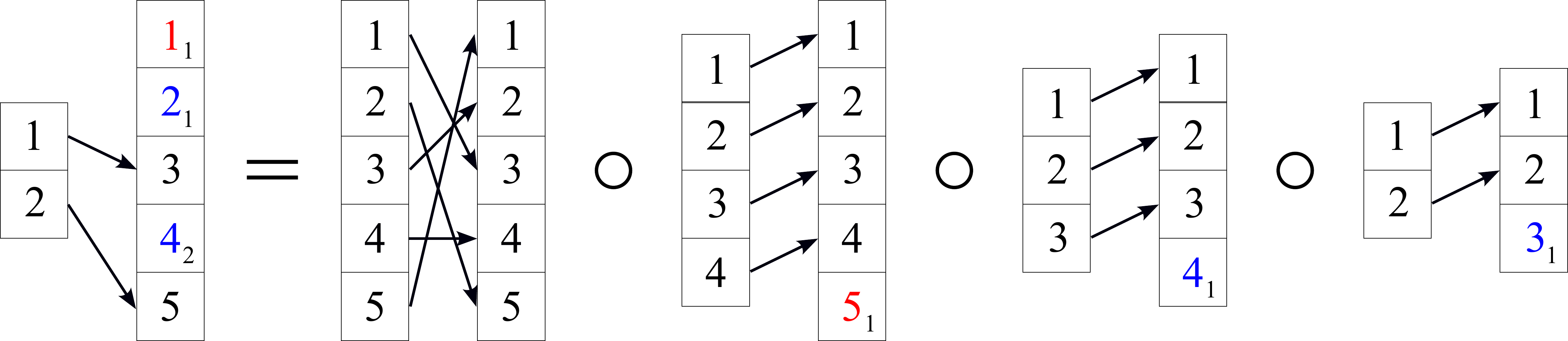}
\caption{A decomposition of the FI$_{2,o}$-morphism depicted on the right of Figure \ref{compFIO2morph} into FI$_{2,o}$-morphisms of the form $\sigma\in S_{5}$ and $(\iota_{l-1}, c_{j_{l}}, o_{j_{l}})$. 
}
\label{FIO2decompmorph}
\end{figure}

Like FB and FI, the category FI$_{d, o}$ is equivalent to its skeleton, which has one object $[n]$ for each $n\in \Z_{\ge0}$, and we will think of FI$_{d,o}$ as this skeleton.
Given an FI$_{d,o}$-module $U$, we write $U_{n}$ for $U\big([n]\big)$. 
The set of FI$_{d,o}$-morphisms from $[n]$ to itself can be identified with the symmetric group $S_{n}$, making $U_{n}$ an $S_{n}$-representation.

Given $U$, an \emph{FI$_{d,o}$-submodule} $U'$ of $U$ is a sequence of symmetric group representations $U'_{n}\subseteq U_{n}$ that is closed under the induced actions of the FI$_{d,o}$-morphisms.
We say that $U$ is \emph{generated} by a set $S\subseteq \bigsqcup_{n\ge 0}U_{n}$ if $U$ is the smallest FI$_{d,o}$-submodule of itself containing $S$. 
If there is some finite set $S$ that generates $U$, then we say that $U$ is \emph{finitely generated}.
If $U$ is generated by $S\subseteq \bigsqcup_{0\le n\le m}U_{n}$, we say that $U$ is \emph{generated in degree at most $m$}.

Next, we provide a pair of necessary and sufficient conditions for an FB-space $X_{\bullet}$ with $d$ maps $\iota_{n, j}:X_{n}\to X_{n+1}$ for all $n$ to be an FI$_{d,o}$-space.
We will use this to prove that $H_{i}\big(F_{\bullet}(\Gamma_{k})\big)$ has the structure of an FI$_{k,o}$-module.

Given an FB-space $X$ with $d$ \emph{insertion maps} $\iota_{n, j}:X_{n}\to X_{n+1}$ for every $n$, we say that \emph{insertion maps commute with permutations} if for every $j$ and $n$ and every $\sigma\in S_{n}$, we have
\[
\iota_{n, j}\circ\sigma=\tilde{\sigma}\circ \iota_{n, j}
\]
where $\tilde{\sigma}\in S_{n+1}$ is the image in $S_{n+1}$ of $\sigma$ under the standard inclusion $\iota_{n}:[n]\hookrightarrow[n+1]$.
Additionally, if for every $j\neq l$ and for every $n$, we have that 
\[
(n+1\, n+2)\circ \iota_{n+1, l}\circ \iota_{n, j}=\iota_{n+1, j}\circ \iota_{n, l},
\]
then we say that \emph{distinct insertions are unordered}.

\begin{prop}\label{conditions for FIOk space}
Let $X$ be an FB-space such that for all $n$ there exist $d$ insertion maps $\iota_{n, j}:X_{n}\to X_{n+1}$.
If these insertion maps commute with permutations and distinct insertions are unordered, then $X$, along with these insertion maps, has the structure of an FI$_{d,o}$-space.
\end{prop}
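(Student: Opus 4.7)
The plan is to construct the claimed FI$_{d,o}$-space structure on $X$ by specifying the action on a generating set of morphisms and then verifying that the two hypotheses imply all relations among these generators. The generators, as observed in the excerpt, are the permutations $\sigma\in S_{n}$, which already act on $X$ via its FB-space structure, together with the basic insertions $(i_{n}, c_{j}, o_{j}):[n]\to [n+1]$, which I would let act by the given map $i_{n,j}:X_{n}\to X_{n+1}$. Using the stated factorization
\[
(f, c, o) = \sigma \circ (i_{m-1}, c_{j_{m}}, o_{j_{m}})\circ\cdots\circ (i_{n}, c_{j_{n+1}}, o_{j_{n+1}}),
\]
I would extend this assignment to all FI$_{d,o}$-morphisms by composing the actions of the factors.

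The heart of the argument is to show that this extension is independent of the chosen factorization. Any two factorizations of a given morphism can be connected by a finite sequence of two elementary moves: (i) sliding a transposition past a basic insertion, and (ii) swapping two consecutive basic insertions of distinct colors, in each case absorbing a compensating transposition into the leading $\sigma$. Move (i) preserves the resulting map on $X$ by the hypothesis that insertion maps commute with permutations, and move (ii) does so by the hypothesis that distinct insertions are unordered. Two consecutive insertions of the same color never need to be swapped, since their relative order is fixed by the ordering datum $o$, so these two moves suffice to connect any pair of factorizations.

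With well-definedness in hand, functoriality reduces to a direct combinatorial check: writing both $(f, c, o):[n]\to[m]$ and $(f', c', o'):[m]\to[l]$ in the normal form above, concatenating, and using moves (i) and (ii) to push the middle permutation to the top of the resulting expression, one recovers a factorization whose coloring and ordering data agree with the composite coloring $c''$ and ordering $o''$ prescribed in the definition of FI$_{d,o}$. Since by the previous step the induced map on $X$ is independent of these manipulations, it agrees with the action of the composite morphism, giving functoriality.

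I expect the main obstacle to be the bookkeeping in the well-definedness step, namely tracking precisely how the leading permutation is adjusted under each elementary move and confirming that for any two equivalent factorizations the cumulative adjustment produces the same overall map on $X$. In particular, care is required when the insertions being swapped sit near the top of the factorization, so that the coloring and ordering labels on the complement of the image are consistently reproduced; verifying this amounts to matching the explicit formulas for $c''$ and $o''$ against the data one reads off after applying the moves.
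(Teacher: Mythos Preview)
Your proposal is correct and follows essentially the same approach as the paper. The only organizational difference is that the paper fixes a canonical factorization (specifying $\sigma'$ uniquely by a rule tied to the coloring and ordering) so that the definition is automatic, and then packages what you call well-definedness into the functoriality check via an explicit induction on $m-n$; your version instead treats well-definedness as a separate step and phrases the argument in terms of elementary moves, but the substantive content---pushing permutations past insertions and swapping distinct-color insertions---is identical.
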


\begin{proof}
For each morphism $(f, c, o):[n]\to [m]$ in FI$_{d,o}$, we need to define a map $(f, c, o)_{*}:X_{n}\to X_{m}$.
When $n=m$, we have $f=\sigma$ for some $\sigma\in S_{n}$, and $c=\cdot$ and $o=\cdot$ are the trivial coloring and ordering, respectively.
In this case $(f, c, o)_{*}=\sigma$, where we also use $\sigma$ to denote its action on $X_{n}$.
The insertion maps $\iota_{l,j_{l+1}}$ describe what happens when $f$ is the standard inclusion $\iota_{l}:[l]\hookrightarrow [l+1]$, $c_{j_{l+1}}$ colors the element $l+1$ color $j_{l+1}$, and $o_{j_{l+1}}$ orders the element $l+1$ the first among the elements colored $j=j_{l+1}$, i.e., $(f, c, o)=(i_{l}, c_{j_{l+1}}, o_{j_{l+1}})$ and $(f, c, o)_{*}=\iota_{l, j_{l+1}}$.

Given an arbitrary morphism $(f, c, o):[n]\to[m]$ in FI$_{d,o}$ we can write $f:[n]\to [m]$ as $\sigma'\circ \iota_{m-1, j_{m}}\circ\cdots\circ \iota_{n, j_{n+1}}$ for $\sigma'\in S_{m}$.
Moreover, there is a unique such $\sigma'$ that sends $1$ to $f(1)$, $2$ to $f(2)$, etc., while sending $n+1$ to the first element in complement of the image of $f$ colored $j_{n+1}$, $n+2$ to the first element colored $j_{n+2}$ complement of the image $f$ if $j_{n+2}\neq j_{n+1}$ and the second element colored $j_{n+1}$ if $j_{n+2}=j_{n+1}$, and so on and so forth.
It follows that 
\[
(f, c, o)=(\sigma', \cdot, \cdot)\circ(\iota_{m-1}, c_{j_{m}}, o_{j_{m}})\circ\cdots\circ (\iota_{n}, c_{j_{n+1}}, o_{j_{n+1}}),
\]
so we should define
\[
(f, c, o)_{*}:=\sigma'\circ\iota_{m-1, j_{m}}\circ\cdots\circ\iota_{n, j_{n+1}}.
\]

To check functoriality, we must show that if we have another sequence of permutations and insertions that compose to $(f, c, o)$ in FI$_{d,o}$, then the corresponding maps on the various spaces $X_{l}$ compose to $(f, c, o)_{*}$.
Given an arbitrary sequence of permutations and insertion maps, the property that insertion maps commute with permutations implies that we can push all the permutations to the left of the insertion maps without changing the composition map, obtaining a composition of insertions followed by a composition of permutations.
Since there is an $S_{n}$-action on $X_{n}$, we can replace the composition of permutations by a single permutation.
Therefore, it suffices to show that if
\[
(f, c, o)=(\sigma'', \cdot, \cdot)\circ (\iota_{m-1}, c_{j'_{m}}, o_{j'_{m}})\circ \cdots\circ(\iota_{n}, c_{j'_{n+1}}, o_{j'_{n+1}}),
\]
then
\[
\sigma''\circ\iota_{m-1, j'_{m}}\circ \cdots\circ\iota_{n, j'_{n+1}}=\sigma'\circ\iota_{m-1, j_{m}}\circ \cdots\circ\iota_{n, j_{n+1}}.
\]

We proceed by induction on $m-n$, noting that the above proves that this is true if $m-n=0$ or $1$.
Note that $\sigma'$ and $\sigma''$ take the same values on $[n]$ as $f$, so we can write $\sigma''=\sigma'\circ \tilde{\sigma}$ where $\tilde{\sigma}$ only permutes $[m]\backslash[n]$.
Therefore, after canceling $\sigma'$ from both sides, it suffices to show that 
\[
\tilde{\sigma}\circ\iota_{m-1, j'_{m}}\circ\cdots\circ\iota_{n, j'_{n+1}}=\iota_{m-1, j_{m}}\circ \cdots\circ\iota_{n, j_{n+1}}.
\]
This follows from the fact that distinct insertions are unordered.
We see this by induction on $m-n$.
If $m-n=1$, this is trivial.
Otherwise, let $n+k=(\tilde{\sigma})^{-1}(n+1)$, i.e., in the alternate composition, element $n+k$ gets inserted with color $j'_{n+k}=j_{n+1}$ and then $\tilde{\sigma}$ changes it to $n+1$.
Since distinct insertions are unordered if $j'_{n+k}\neq j'_{n+k-1}$, we can write
\[
\iota_{n+k-1, j'_{n+k}}\circ\iota_{n+k-2, j'_{n+k-1}}=(n+k\;\;\;n+k-1)\circ \iota_{n+k-1, j'_{n+k-1}}\circ\iota_{n+k-2, j'_{n+k}}.
\]
Moreover, we need not worry about the case $j'_{n+k}=j'_{n+k-1}$ as this would imply that the two compositions have a different first element of color $j'_{n+k}$ in the ordering given by $o$, a contradiction.
By applying the fact that insertion maps commute with permutations, we can move all the transpositions to the left to make the composition
\[
\tilde{\sigma}\circ (n+k\;\;\; n+k-1)\circ\cdots\circ (n+2\;\;\; n+1).
\]
This is equal to 
\[
\tilde{\sigma}\circ(n+k\;\;\;n+k-1\;\;\;\cdots\;\;\; n+2\;\;\; n+1),
\]
which fixes $n+1$.
Denote this composition by $\tilde{\sigma}'$.
It follows that
\[
\tilde{\sigma}\circ\iota_{m-1, j'_{m}}\circ\cdots\circ\iota_{n, j'_{n+1}}=\tilde{\sigma}'\circ \iota_{m-1, j'_{m}}\circ\cdots\circ\iota_{n+k, j'_{n+k+1}}\circ \iota_{n+k-1, j'_{n+k-1}}\circ\cdots\circ\iota_{n+1, j'_{n+1}}\circ\iota_{n, j'_{n+k}},
\]
and $j'_{n+k}=j_{n+1}$.
By the induction hypothesis, we have that 
\[
\tilde{\sigma}'\circ \iota_{m-1, j'_{m}}\circ\cdots\circ\iota_{n+k, j'_{n+k+1}}\circ\iota_{n+k-1, j'_{n+k-1}}\circ\cdots\circ\iota_{n+1, j'_{n+1}}
\]
and
\[
\iota_{m-1, j_{m}}\circ\cdots\circ\iota_{n+1, j_{n+2}}
\]
induce the same maps from $[n+1]$ to $[m]$ as they are both induced by $(f, c, o)$---here $n+1$ is not colored or ordered, rather it is sent to the element that is the first colored $j_{n+1}$ by  $(f, c, o)$---so by composing both of these on the right with $\iota_{n, j'_{n+k}}=\iota_{n, j_{n+1}}$ gives the desired equality, completing the inductive step and the proof of the proposition.
\end{proof}

Note that there exist $k$ insertion maps $\iota_{n, j}:F_{n}(\Gamma_{k})\to F_{n+1}(\Gamma_{k})$ that give $F_{\bullet}(\Gamma_{k})$ the structure of an FI$_{k,o}$-space.

\begin{lem}\label{Fstar is an FIOk space}
Let $s_{j}:\Gamma_{k}\to \Gamma_{k}$ be the continuous map that scales edge $j$ by a factor of $\frac{4}{5}$ and is the identity on the other edges, and let 
\[
\iota_{n, j}: F_{n}(\Gamma_{k}) \to F_{n+1}(\Gamma_{k})
\]
be the map induced by $s_{j}$ that adds a particle labeled $n+1$ at the end of the $j^{\text{th}}$-leaf of $\Gamma_{k}$.
See Figure \ref{inclusiongraphconf}.
Then, for all $k\ge 3$, the ordered configuration space of particles in the star graph with $k$ leaves, along with these insertion maps, has the structure of an FI$_{k,o}$-space.
\end{lem}

\begin{proof}
By Proposition \ref{conditions for FIOk space} it suffices to check that our insertion maps $\iota_{n,j}$ are unordered and commute with permutations.
This follows immediately from the definition of the $\iota_{n,j}$.
\end{proof}

\begin{figure}[h]
\centering
\captionsetup{width=.8\linewidth}
\includegraphics[width = 6cm]{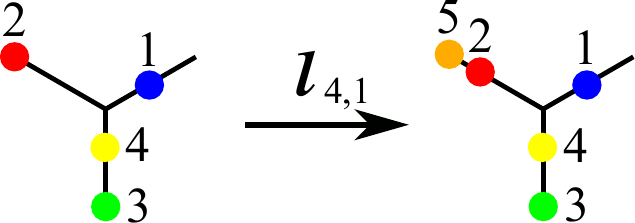}
\caption{The action of $\iota_{4,1}$ on a point in $F_{4}(\Gamma_{3})$. 
From now on we will suppress the naming of the particles by natural numbers in our figures, and instead use colors to identify them.
These are not the colors associated to the FI$_{k,o}$-modules, which are the edges of $\Gamma_{k}$.
}
\label{inclusiongraphconf}
\end{figure}

Since homology with $R$-coefficients is a covariant functor from the category of topological spaces to the category of $R$-modules, the following corollary is immediate.

\begin{cor}\label{homology is an FIOk mod}
For all $k\ge 3$ and all $i$, the sequence of homology groups $H_{i}\big(F_{\bullet}(\Gamma_{k})\big)$ has the structure of an FI$_{k,o}$-module.
\end{cor}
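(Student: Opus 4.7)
The plan is essentially to invoke functoriality. By Lemma \ref{Fstar is an FIOk space}, $F_\bullet(\Gamma_k)$ is an FI$_{k,o}$-space, that is, a covariant functor $F_\bullet(\Gamma_k) : \mathrm{FI}_{k,o} \to \mathrm{Top}$. Singular homology with $R$-coefficients is itself a covariant functor $H_i(-;R) : \mathrm{Top} \to R\text{-mod}$. The composition of these two functors is a covariant functor from $\mathrm{FI}_{k,o}$ to $R\text{-mod}$, which is exactly the definition of an FI$_{k,o}$-module.

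Concretely, I would first recall that for each morphism $(f,c,o):[m]\to[n]$ in $\mathrm{FI}_{k,o}$, Lemma \ref{Fstar is an FIOk space} (together with the construction in the proof of Proposition \ref{conditions for FIOk space}) supplies a continuous map $(f,c,o)_*:F_m(\Gamma_k)\to F_n(\Gamma_k)$ built by composing permutations with the insertion maps $i_{n,j}$. I would then define the action of $(f,c,o)$ on $H_i\bigl(F_\bullet(\Gamma_k)\bigr)$ to be the induced map $\bigl((f,c,o)_*\bigr)_* : H_i\bigl(F_m(\Gamma_k)\bigr)\to H_i\bigl(F_n(\Gamma_k)\bigr)$.

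To verify this gives an FI$_{k,o}$-module, I would note that $H_i$ sends identities to identities and respects composition, so the functoriality of the FI$_{k,o}$-space structure already established in Lemma \ref{Fstar is an FIOk space} transfers directly to functoriality at the level of homology. There is no real obstacle here; all of the substantive combinatorial work — namely checking that insertion maps commute with permutations and that distinct insertions are unordered — has already been carried out in the proofs of Proposition \ref{conditions for FIOk space} and Lemma \ref{Fstar is an FIOk space}. The corollary is simply the formal consequence of post-composing with the homology functor.
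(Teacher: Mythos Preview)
Your proposal is correct and matches the paper's approach exactly: the paper simply notes that homology with $R$-coefficients is a covariant functor from spaces to $R$-modules, so the corollary is immediate from Lemma~\ref{Fstar is an FIOk space}. Your write-up just spells this out in slightly more detail.
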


This structure comes from noting that $\iota_{n,j}$ induces a map
\[
(\iota_{n, j})_{*}:H_{i}\big(F_{n}(\Gamma_{k})\big)\to H_{i}\big(F_{n+1}(\Gamma_{k})\big)
\]
that tensors a class in $H_{i}\big(F_{n}(\Gamma_{k})\big)$ with the fundamental class of $H_{0}\big(F_{1}(\R)\big)$ along the $j^{\text{th}}$-edge. 

To determine the generation and presentation degrees of our FI$_{d,o}$-modules we introduce the notion of free FI$_{d,o}$-modules.
The \emph{free} FI$_{d,o}$-module generated in degree $n$ is
\[
M^{\text{FI}_{d,o}}(n)_{\bullet}:=\text{Hom}_{\text{FI}_{d,o}}\big([n], \bullet\big),
\]
i.e., $M^{\text{FI}_{d,o}}(n)_{m}:=\text{Hom}_{\text{FI}_{d,o}}\big([n], [m]\big)$ and the FI$_{d,o}$-module morphisms from $M^{\text{FI}_{d,o}}(n)_{m}$ to $M^{\text{FI}_{d,o}}(n)_{l}$ arise from the FI$_{d,o}$-morphisms from $[m]$ to $[l]$.
If $W_{n}$ is an $S_{n}$-representation, we set
\[
M^{\text{FI}_{d,o}}(W_{n})_{\bullet}=W_{n}\otimes_{S_{n}}M^{\text{FI}_{d,o}}(n)_{\bullet}.
\]
We extend this definition to FB-modules by setting
\[
M^{\text{FI}_{d,o}}(W)_{\bullet}=\bigoplus_{n=0}M^{\text{FI}_{d,o}}(W_{n})_{\bullet}.
\]

Free $\text{FI}_{d,o}$-modules allow us to reformulate the notion of finite-generation of an $\text{FI}_{d,o}$-module.
Namely, an $\text{FI}_{d,o}$-module $U$ is finitely generated if and only if there is a surjection
\[
M^{\text{FI}_{d,o}}(W)\twoheadrightarrow U,
\]
where $W$ is a finitely generated FB-module, i.e., an FB-module such that $W_{n}=0$ for all but finitely many $n$, and if $W_{n}\neq 0$, then $W_{n}$ is a finite dimensional $S_{n}$-representation.
This also allows us to define finitely presented $\text{FI}_{d,o}$-modules.
We say that $U$ is a \emph{finitely presented} $\text{FI}_{d,o}$-module if there exist finitely generated FB-modules $W$ and $V$ such that the sequence
\[
M^{\text{FI}_{d,o}}(V)\to M^{\text{FI}_{d,o}}(W)\to U\to 0
\]
is exact.
The \emph{presentation degree} of $W$ is the minimum of the generation degrees of $V$ and $W$ as FB-modules.

\subsection{Dimension and non-Noetherianity}
While we will not need the following information about FI$_{d,o}$-modules in the rest of the paper, we include it as it is of independent interest.

Note that there are $(m)\cdots(m-n+1)$ injections $\phi:[n]\hookrightarrow[m]$.
For the remaining $m-n$ elements of the image of $[m]\backslash\phi\big([n]\big)$ and each weak $d$-composition $(a_{1}, \dots, a_{d})$ of $m-n$ there are $\binom{m-n}{a_{1}!\cdots a_{d}!}$ ways of coloring $a_{1}$ of the elements color $1$, coloring $a_{2}$ of the elements $2$, so on and so forth.
There are $a_{1}!$ ways of ordering the color $1$ elements, $a_{2}!$ ways of ordering the color $2$ elements, etc.
Since there are $\binom{m-n+d-1}{d-1}$ weak $d$-compositions of $m-n$, we get the following proposition on the dimension of free FI$_{d, o}$-modules.

\begin{prop}\label{size of a free FIOk}
For $m\ge n$
\[
\dim\big(M^{\text{FI}_{d,o}}(n)_{m}\big)=m!\binom{m-n+d-1}{d-1}
\]
\end{prop}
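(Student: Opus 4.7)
The plan is to count the morphism set $\mathrm{Hom}_{\mathrm{FI}_{d,o}}\big([m],[n]\big)$ directly from the definition of an $\mathrm{FI}_{d,o}$-morphism, which consists of an injection $f\colon[m]\hookrightarrow[n]$, a $d$-coloring $c$ of $[n]\setminus f([m])$, and a total ordering of each color class. The preceding paragraph of the paper already enumerates the relevant sub-counts; what remains is to multiply them together correctly.

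My preferred approach is bijective. Given a morphism $(f,c,o)$, I would form the sequence
\[
\big(f(1),\ldots,f(m),\, y_{1},\ldots,y_{n-m}\big),
\]
where $y_{1},\ldots,y_{n-m}$ lists the elements of $[n]\setminus f([m])$ by first writing out the color-$1$ elements in the order prescribed by $o$, then the color-$2$ elements in order, and so on up to color $d$. This sequence is a permutation of $[n]$. To recover the coloring and orderings from the sequence, it suffices to know, for each color $j$, how many of the $y_i$ have color $j$, i.e. a weak $d$-composition $(a_{1},\ldots,a_{d})$ of $n-m$. The construction is visibly reversible: a permutation of $[n]$ recovers $f$ from its first $m$ entries, and partitioning the remaining $n-m$ entries into consecutive blocks of sizes $a_{1},\ldots,a_{d}$ determines $c$ and $o$. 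Thus morphisms are in bijection with $S_{n}\times\{\text{weak $d$-compositions of }n-m\}$, and the latter set has size $\binom{n-m+d-1}{d-1}$ by stars-and-bars, yielding $n!\binom{n-m+d-1}{d-1}$.

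As a consistency check, I would also carry out the direct multiplicative count sketched in the paragraph before the proposition: there are $n!/(n-m)!$ injections; for a fixed weak $d$-composition $(a_{1},\ldots,a_{d})$ of $n-m$, there are $\tfrac{(n-m)!}{a_{1}!\cdots a_{d}!}$ colorings of the complement with that profile and $a_{1}!\cdots a_{d}!$ orderings within color classes, whose product is $(n-m)!$ independently of the composition; summing over the $\binom{n-m+d-1}{d-1}$ weak compositions and multiplying by $n!/(n-m)!$ gives the same total $n!\binom{n-m+d-1}{d-1}$. There is no serious obstacle: the statement is a bookkeeping identity, and the only points requiring care are phrasing the bijection unambiguously and invoking the stars-and-bars count for weak $d$-compositions of $n-m$.
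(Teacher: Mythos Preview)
Your proposal is correct and your ``consistency check'' is exactly the paper's argument: the paper counts injections, then for each weak $d$-composition of $n-m$ multiplies the multinomial count of colorings by the product $a_1!\cdots a_d!$ of orderings, and sums over compositions. Your bijective repackaging with $S_n\times\{\text{weak $d$-compositions of }n-m\}$ is a clean extra perspective but not a genuinely different route.
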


In fact, more can be said: As $S_{m}$-representations
\[
M^{\text{FI}_{d,o}}(W_{n})_{m}=\bigoplus_{(a_{1}, \dots, a_{d})}\big(\text{Ind}^{S_{m}}_{S_{n}\times S_{a_{1}}\times\cdots\times S_{a_{d}}}W_{n}\boxtimes \text{Reg}_{a_{1}}\boxtimes\cdots\boxtimes \text{Reg}_{a_{d}}\big),
\]
where $\text{Reg}_{a_{i}}$ is the regular representation of $S_{a_{i}}$.

\begin{remark}
The results in this section could be restated in the language of twisted algebras.
In particular, an FI$_{d,o}$-module can be thought of as module over the twisted algebra version of the $d$-fold tensor product of the tensor algebra on a $1$-dimensional vector space, i.e., the twisted algebra generated by the tensor product of $d$ copies of the twisted tensor algebra generated by the trivial representation in degree $1$, where the product is the Day convolution.

When $d=1$, this is the twisted algebra $H_{0}\big(F_{\bullet}(\R)\big)$.
We prove that this twisted algebra is not Noetherian, i.e., a submodule of finitely generated module over $H_{0}\big(F_{\bullet}(\R)\big)$ need not be finitely generated.

\begin{prop}\label{not Noetherian}
As a twisted algebra, $H_{0}\big(F_{\bullet}(\R)\big)$ is not Noetherian.
\end{prop}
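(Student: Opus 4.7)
The plan is to exhibit a finitely generated FI$_{1,o}$-module---equivalently, a module over the twisted algebra $A := H_{0}\big(F_{\bullet}(\R)\big)$---that admits a non-finitely-generated submodule. Since $F_{n}(\R)$ has one connected component per linear ordering of $n$ points on $\R$, we have $A_{n}\cong \Z[S_{n}]$, and in fact $A\cong M^{\text{FI}_{1,o}}(0)$ as an FI$_{1,o}$-module; in particular $A$ is finitely generated (in degree $0$). The augmentation $\epsilon\colon A\to \Z_{\bullet}$ sending each $\sigma\in S_{n}$ to $1$ is a morphism of FI$_{1,o}$-modules, because it is $S_{n}$-invariant and commutes with the insertion map $A_{n}\to A_{n+1}$, $\sigma\mapsto\bar\sigma$, where $\bar\sigma\in S_{n+1}$ extends $\sigma$ by fixing $n+1$. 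Its kernel $J\subseteq A$, with $J_{n}=\{\sum_{\sigma}c_{\sigma}\sigma\,:\, \sum_{\sigma}c_{\sigma}=0\}$, is therefore an FI$_{1,o}$-submodule of $A$, and I claim $J$ is not finitely generated.

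For each $n\ge 2$, consider the test element $w_{n}:=\mathrm{id}-(n-1,n)\in J_{n}$---the difference of the identity and the transposition of the last two labels---and the linear functional $L_{n}\colon A_{n}\to \Z$ assigning to $\sigma\in S_{n}$ the value $+1$ if $\sigma(n-1)=n-1$ and $\sigma(n)=n$, the value $-1$ if $\sigma(n-1)=n$ and $\sigma(n)=n-1$, and the value $0$ otherwise; then $L_{n}(w_{n})=2$. Under the identification of $\mathrm{Hom}_{\text{FI}_{1,o}}([m],[n])$ with $S_{n}$ in the proof of Proposition~\ref{conditions for FIOk space}, a morphism $\phi\colon[m]\to[n]$ corresponds to some $\sigma\in S_{n}$, and for $v=\sum_{\tau}c_{\tau}\tau\in A_{m}$ one has $\phi_{*}(v)=\sum_{\tau}c_{\tau}\,\sigma\bar\tau$, where $\bar\tau$ denotes the extension of $\tau$ that fixes $m+1,\ldots,n$. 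When $m<n-1$ each product $\sigma\bar\tau$ satisfies $(\sigma\bar\tau)(n-1)=\sigma(n-1)$ and $(\sigma\bar\tau)(n)=\sigma(n)$ independently of $\tau$, so $L_{n}(\sigma\bar\tau)$ depends only on $\sigma$ and
\[
L_{n}\bigl(\phi_{*}(v)\bigr)=L_{n}(\sigma\bar\tau)\cdot\epsilon(v).
\]
In particular, $L_{n}$ annihilates $\phi_{*}(v)$ whenever $v\in J_{m}$ with $m<n-1$, and hence annihilates the entire FI$_{1,o}$-submodule of $A$ generated by elements of $J$ of degree at most $n-2$.

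If $J$ were finitely generated in degree at most $M$, taking $n=M+2$ would force $L_{n}$ to vanish on all of $J_{n}$, contradicting $L_{M+2}(w_{M+2})=2$. Thus $J$ is not finitely generated, and $A$ is not Noetherian as a twisted algebra. The main obstacle is constructing a linear functional that is simultaneously nonzero on $w_{n}$ and forced to vanish on every FI$_{1,o}$-image of a $J$-element of strictly lower degree; the construction succeeds precisely because FI$_{1,o}$-insertion rigidly places each new element at the fixed last position (with no FI-style symmetrization over positions), so a functional that reads only the values of $\sigma$ at the last two positions remains invisible to every lower-degree element of $J$.
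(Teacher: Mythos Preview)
Your proof is correct, and it takes a genuinely different route from the paper's. The paper constructs a submodule $U\subset A$ by choosing, for each even degree $2m$, an element $[x_{2m}]$ whose $S_{2m}$-span is a single irreducible representation not contained in the submodule generated by the earlier $[x_{2m'}]$; the existence of such elements is guaranteed by a dimension count using the bound $\dim V\le\sqrt{(2m)!}$ for irreducible $S_{2m}$-representations together with the inequality $\sum_{m'\ge 1}1/\sqrt{(2m')!}<1$. Your argument instead exhibits the augmentation ideal $J=\ker\epsilon$ and shows directly, via the functionals $L_n$, that $w_n=\mathrm{id}-(n-1,n)$ can never lie in the span of images of $J_m$ with $m\le n-2$.

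Your approach is more elementary and more explicit: it avoids any representation-theoretic input, names a concrete submodule (the augmentation ideal) rather than asserting the existence of one, and the separating functionals $L_n$ make the obstruction completely transparent. The key observation you isolate---that under any FI$_{1,o}$-morphism $[m]\to[n]$ with $m\le n-2$, the last two positions of the resulting permutation are determined solely by the outer $\sigma$ and not by the input $\tau$---is exactly what distinguishes FI$_{1,o}$ from FI and is the combinatorial heart of the non-Noetherianity. The paper's argument, by contrast, is more in the spirit of a pigeonhole/density argument and would adapt more readily to other twisted algebras where one has control on growth rates, but it is less self-contained here.
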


We prove Proposition \ref{not Noetherian} by constructing a non-finitely generated $H_{0}\big(F_{\bullet}(\R)\big)$-submodule of a finitely generated $H_{0}\big(F_{\bullet}(\R)\big)$-module.

\begin{proof}
As module over itself, $H_{0}\big(F_{\bullet}(\R)\big)$ is finitely generated in degree $0$.
We prove there is an $H_{0}\big(F_{\bullet}(\R)\big)$-submodule of $H_{0}\big(F_{\bullet}(\R)\big)$ with a generator in each positive even degree that cannot be finitely generated as an $H_{0}\big(F_{\bullet}(\R)\big)$-module.
 
Note that for all $m$
\[
\sum_{n=1}^{\frac{m}{2}}\frac{1}{\sqrt{(2n)!}}<1.
\]
Since the maximum dimension of an irreducible $S_{2n}$-representation is at most $\sqrt{(2n)!}$, it follows that there is an $H_{0}\big(F_{\bullet}(\R)\big)$-submodule $U$ of $H_{0}\big(F_{\bullet}(\R)\big)$ with one generator $[x_{2n}]$ in each of the positive even degrees, such that the $S_{2n}$-span of the degree $2n$ generator is a single irreducible $S_{2n}$-representation, and such that each generator is not in the $H_{0}\big(F_{\bullet}(\R)\big)$-submodule of $H_{0}\big(F_{\bullet}(\R)\big)$ generated by the lower degree generators.

Let $W$ be a finite FB-module such that there exists a surjection
\[
M^{H_{0}\big(F_{\bullet}(\R)\big)}(W)\twoheadrightarrow U\subset H_{0}\big(F_{\bullet}(\R)\big),
\] 
where $M^{H_{0}\big(F_{\bullet}(\R)\big)}(W)$ is the free $H_{0}\big(F_{\bullet}(\R)\big)$-module generated by $W$---this is equivalent to $M^{\text{FI}_{1,o}}(W)$---and let $m$ be largest value such that $W_{m}\neq 0$.
By definition, the map
\[
M^{H_{0}\big(F_{\bullet}(\R)\big)}(W)_{m}\to U_{m}\subset H_{0}\big(F_{n}(\R)\big),
\] 
is a surjection.
Let $l>m$ be the next even number bigger than $m$.
Then the maps
\[
M^{H_{0}\big(F_{\bullet}(\R)\big)}(W)_{l}\to U_{l}\subset H_{0}\big(F_{l}(\R)\big)
\]
and
\[
M^{H_{0}\big(F_{\bullet}(\R)\big)}(U_{m})_{l}\to U_{l}\subset H_{0}\big(F_{l}(\R)\big)
\]
have the same image in $U_{l}$, namely the classes that can be obtained by adding $l-m$ points to classes in $U_{m}$ and permuting the labels.
Since the classes in $U_{m}$ can be obtained by adding $m-2n$ points to the various $[x_{2n}]$, it follows that the image of $M^{H_{0}\big(F_{\bullet}(\R)\big)}(W)_{l}$ does not include the class $[x_{l}]$.
Therefore $W$ does generate $U$, a contradiction, so $U$ cannot be finitely generated.
\end{proof}

It follows from Proposition \ref{not Noetherian} that one cannot use a direct analogue of the technique used by An--Drummond-Cole--Knudsen to prove their Theorem 1.1 \cite[Theorem 1.1]{an2020edge} to prove that $H_{1}\big(F_{\bullet}(\Gamma_{k})\big)$ is a finitely generated FI$_{k,o}$-module.

For more on modules over a twisted algebra see \cite{sam2012introduction}.
\end{remark}

Having proved that $H_{i}\big(F_{\bullet}(\Gamma_{k})\big)$ is an FI$_{k,o}$-module, we turn to calculating its generation and presentation degrees.
In order to do so, we recall the definition of a cubical complex defined by L\"{u}tgehetmann that is homotopy equivalent to $F_{n}(\Gamma_{k})$.

\section{$F_{n}(\Gamma)$ and $K_{n}(\Gamma)$}\label{cube complex}

The focus of this paper is the first homology of the ordered configuration space of particles in the star graph $\Gamma_{k}$.
Instead of directly studying $F_{n}(\Gamma_{k})$, we study a combinatorial model $K_{n}(\Gamma)$ of ordered graph configuration spaces due to L\"{u}tgehetmann \cite{lu14}, which generalizes a model for unordered configuration spaces due to {\'S}wi{\k{a}}tkowski \cite{swikatkowski2001estimates}.
This combinatorial model has several nice properties that we will leverage through a spectral sequence argument.

Given a graph $\Gamma$, we describe the cube complex $K_{n}(\Gamma)$. 
The $0$-cells of $K_{n}(\Gamma)$ correspond to configurations of $n$ particles in $\Gamma$ such that every particle in the configuration is either on a vertex of $\Gamma$ or the interior of an edge of $\Gamma$ with the following caveat: if $e$ is an edge of $\Gamma$ connected to a leaf $v$, then a particle can only be on $e$ if one is already on $v$, i.e., we ensure the outermost particle on $e$ is moved to the leaf if there is not already a particle on the leaf.

The $1$-cells of $K_{n}(\Gamma)$ correspond to the movement of the closest particle on an edge of $\Gamma$ to a vertex to that vertex, if it is unoccupied.
Such a $1$-cell connects the two $0$-cells where this particle is at the vertex and where this particle is the particle on that edge closest to that vertex.
Additionally, if $v$ is a leaf and there is no particle on the edge connecting $v$ to the rest of the graph and no particle at the neighboring vertex, there is $1$ cell in $K_{n}(\Gamma)$ connecting the corresponding $0$-cells.

In general, the $m$-cells of $K_{n}(\Gamma)$ correspond to the movement of $m$ particles in $\Gamma$, closest on their edge to the unoccupied essential vertex to which they are moving.
Moreover, no two particles are allowed to go to the same vertex.
See Figure \ref{K2Gamma3} for an example of a $K_{n}(\Gamma)$.

We recall the following theorem of L\"{u}tgehetmann, which will allow us to study $K_{n}(\Gamma)$ in place of $F_{n}(\Gamma)$.

\begin{figure}[h]
\centering
\captionsetup{width=.8\linewidth}
\includegraphics[width = 10cm]{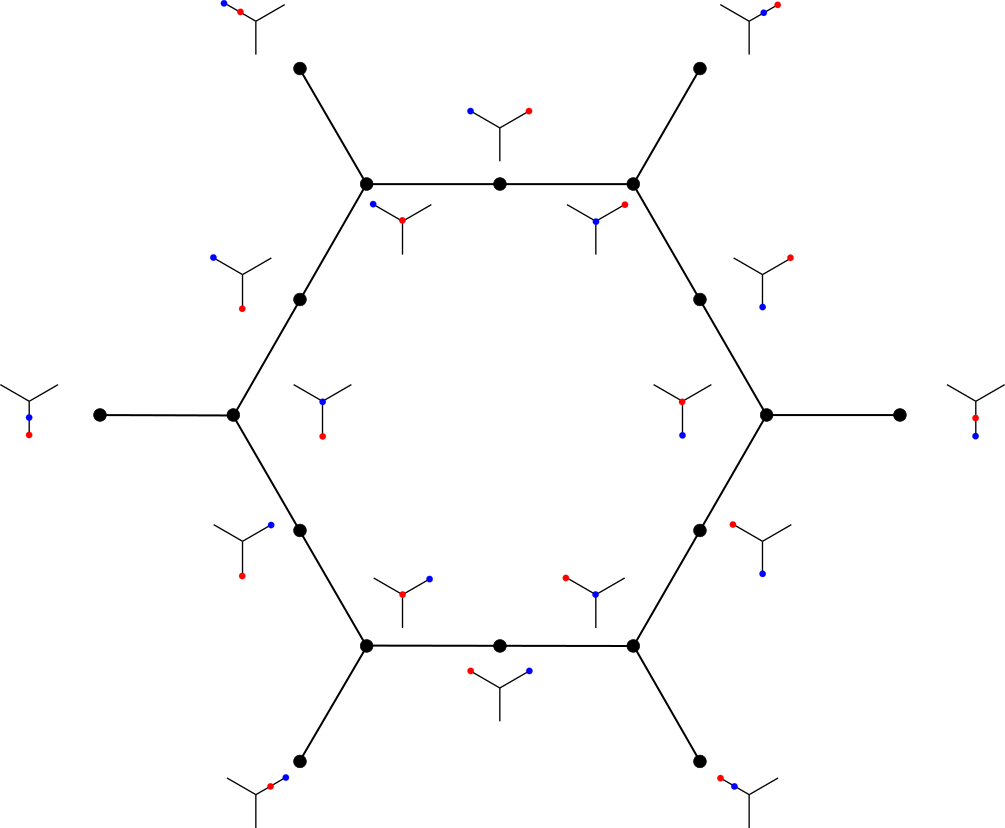}
\caption{The cubical complex $K_{2}(\Gamma_{3})$, which is an equivariant deformation retract of $F_{2}(\Gamma_{3})$.
}
\label{K2Gamma3}
\end{figure}

\begin{thm}\label{cube complex for configuration space}
(L\"{u}tgehetmann \cite[Theorem 2.3]{lu14}) Let $\Gamma$ be a locally finite graph, then the finite-dimensional cube complex $K_{n}(\Gamma)$ is an $S_{n}$-equivariant deformation retract of $F_{n}(\Gamma)$.
If $\Gamma$ is finite, then $K_{n}(\Gamma)$ consists of finitely many cells.
\end{thm}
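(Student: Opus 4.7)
The plan is to construct an explicit $S_n$-equivariant strong deformation retraction from $F_n(\Gamma)$ onto an embedded copy of $K_n(\Gamma)$. First I would embed $K_n(\Gamma) \hookrightarrow F_n(\Gamma)$ by fixing a length-one metric on each edge together with a distinguished interior point of each edge, and then interpreting each $m$-cube of $K_n(\Gamma)$ as parameterizing the positions of the $m$ particles that are simultaneously moving between their canonical interior points and adjacent unoccupied vertices; the remaining particles stay at their canonical positions. This embedding is manifestly equivariant under the $S_n$-action because the cube parameterizations ignore particle labels.

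The retraction $r \colon F_n(\Gamma) \to K_n(\Gamma)$ would then be built edge by edge. Divide each edge $e$ into a central ``storage'' segment and two short ``attraction'' collars near its endpoints. For a configuration $\mathbf{x}\in F_n(\Gamma)$, sort the particles on each edge by position: slide particles in the storage segment monotonically to their canonical interior positions, while the particle closest to an endpoint $v$, when it lies in the attraction collar at $v$, is pulled the rest of the way to $v$ if $v$ is unoccupied, and otherwise is pushed into the storage segment. The leaf caveat is implemented by treating every leaf as always ``attracting,'' so that the outermost particle on the incident edge is always pulled to the leaf. The deformation retraction is then the straight-line homotopy $H(\mathbf{x},t)=(1-t)\mathbf{x}+t\,r(\mathbf{x})$ carried out along each edge; monotonicity of the slide in each particle's coordinate ensures no two particles collide, so $H$ lands in $F_n(\Gamma)$, and equivariance is automatic since the rules are expressed entirely in terms of geometric positions.

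The main obstacle will be continuity of $r$ at configurations of borderline combinatorial type --- for instance, when a particle sits exactly at a vertex, or when a particle on one edge approaches a vertex at the same instant another particle leaves it, so that the target combinatorial cell of $K_n(\Gamma)$ flips. The fix is to define the boundary between storage and attraction via a single continuous threshold and to check that, on the overlap where a vertex is marginally (un)occupied, the ``pull to the vertex'' and the ``push into storage'' formulas agree. Once continuity, equivariance, and collision-freeness are in hand, verifying that $r$ restricts to the identity on $K_n(\Gamma)$ is routine, and the finiteness statement for finite $\Gamma$ is immediate from the fact that the combinatorial data indexing cells --- a choice of vertex or ordered edge-slot for each particle, subject to the leaf condition --- is finite.
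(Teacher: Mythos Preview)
The paper does not give its own proof of this theorem; it is quoted verbatim from L\"utgehetmann and used as a black box. So there is nothing to compare your argument against within this paper.

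As for the proposal itself, the overall shape is reasonable, but there is a genuine gap in the continuity argument that your proposed fix does not cover. The problem is not the case you single out (a particle sitting exactly at a vertex while another approaches), but the case of an \emph{essential} vertex $v$ with two particles $p_1, p_2$ lying on distinct incident edges $e_1, e_2$, both inside their respective attraction collars near $v$, with $v$ itself unoccupied. Your rule says each of them, being the closest particle on its edge to the unoccupied vertex $v$, should be pulled to $v$; but they cannot both go there. Any tie-breaking rule based on which is closer produces a discontinuous $r$: along a path in $F_n(\Gamma)$ that continuously swaps which of $p_1,p_2$ is nearer to $v$, the image $r(\mathbf{x})$ jumps between two different $0$-cells of $K_n(\Gamma)$. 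There is no $2$-cell of $K_n(\Gamma)$ to absorb this jump, since by definition higher cubes require the moving particles to be headed to \emph{distinct} vertices. A single continuous threshold between storage and attraction does nothing to resolve this; the discontinuity lives in the competition between edges, not in the occupied/unoccupied dichotomy.

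L\"utgehetmann's actual construction (following \'Swi\k{a}tkowski's unordered version) avoids this by organizing the retraction around the combinatorial cell structure of $F_n(\Gamma)$ itself rather than by an edge-by-edge pull/push rule, so that the target cell of $K_n(\Gamma)$ varies continuously with the configuration. If you want to salvage your approach, you would need to replace the discrete ``pull if unoccupied'' decision at each essential vertex by something that depends continuously on the positions of \emph{all} particles near that vertex and still lands in $K_n(\Gamma)$; this is exactly the nontrivial content of the theorem.
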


In particular, if $\Gamma=\Gamma_{k}$ it follows from the definition that $K_{n}(\Gamma_{k})$ is a $1$-dimensional complex, i.e., a graph; we parametrize it so that its edges have length $1$.
We state for convenience the following well-known result that will limit our work to two cases.

\begin{prop}\label{only homology in degree 0,1}
If $i\neq 0, 1$, then $H_{i}\big(F_{n}(\Gamma_{k})\big)=0$.
\end{prop}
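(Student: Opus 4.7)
The plan is a direct reduction to the combinatorial model $K_n(\Gamma_k)$. By Theorem~\ref{cube complex for configuration space} the space $F_n(\Gamma_k)$ is homotopy equivalent to $K_n(\Gamma_k)$, so by homotopy invariance of singular homology it suffices to show $H_i\big(K_n(\Gamma_k)\big)=0$ for $i\geq 2$. As recorded in the paragraph immediately preceding the proposition, for the star graph the complex $K_n(\Gamma_k)$ is $1$-dimensional, i.e.\ an ordinary graph. Its cellular chain complex therefore vanishes above degree $1$, which gives the conclusion at once (the case $i<0$ being vacuous).

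I do not expect any real obstacle, since the proposition is a bookkeeping consequence of two facts already assembled. If one wished to make the dimensionality claim self-contained, the only content would be checking that no $2$-cell can appear in $K_n(\Gamma_k)$, which reduces to the observation that in $\Gamma_k$ only the central vertex can serve as the target of a particle move: the caveat in the definition of $K_n$ forces any edge incident to an unoccupied leaf to be empty, ruling out every edge-interior-to-leaf move, and the remaining leaf-to-neighbor $1$-cells likewise have the central vertex as their target. Since every candidate move competes for the same vertex, no two can be performed simultaneously, and hence $K_n(\Gamma_k)$ is at most $1$-dimensional.
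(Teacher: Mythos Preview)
Your argument is correct and follows exactly the route the paper itself indicates: the paper states the proposition as a well-known consequence of the sentence immediately preceding it, namely that $K_n(\Gamma_k)$ is a $1$-dimensional complex, together with Theorem~\ref{cube complex for configuration space}. Your supplementary paragraph verifying the $1$-dimensionality directly from the definition (all moves target the central vertex, so no two can be performed simultaneously) is more detail than the paper supplies, but it is accurate and matches the intended reasoning.
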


Next, we state a result due to Ghrist \cite{Ghr01} describing the structure of $\pi_{1}\big(F_{n}(\Gamma_{k})\big)$.
Later, we will use an immediate corollary of this proposition to find a bound on the generation degree of $H_{1}\big(F_{\bullet}(\Gamma_{k})\big)$ as an FI$_{k,o}$-module.

\begin{prop}\label{pi1 of conf}
(Ghrist \cite[Proposition 4.1]{Ghr01})
The braid group $\pi_{1}\big(F_{n}(\Gamma_{k})\big)$ is isomorphic to a free group on $Q$ generators, where
\[
Q=1+(nk-2n-k+1)\frac{(n+k-2)!}{(k-1)!}.
\]
\end{prop}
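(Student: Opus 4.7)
The plan is to apply Theorem \ref{cube complex for configuration space} to replace $F_n(\Gamma_k)$ by the homotopy-equivalent cube complex $K_n(\Gamma_k)$, which, as observed after that theorem, is $1$-dimensional and hence a finite graph. Because every allowed $1$-cell involves the central vertex $v_0$ as either the source or target of the moving particle, any configuration can be brought by a sequence of $1$-cells to a fixed reference one (say, all $n$ particles stacked on the first arm, ordered by label from the leaf inward), so $K_n(\Gamma_k)$ is connected. Since the fundamental group of a connected graph is free of rank $1 - \chi$, the proposition reduces to verifying
\[
1 - V + E = 1 + (nk - 2n - k + 1)\tfrac{(n+k-2)!}{(k-1)!},
\]
where $V$ and $E$ are the numbers of $0$- and $1$-cells of $K_n(\Gamma_k)$.

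For the $0$-cell count I would parameterize a configuration by an indicator $\epsilon \in \{0,1\}$ for occupancy of $v_0$ together with, for each of the $k$ arms, an ordered list of the particles on that arm (leaf-first, interior positions moving inward). A stars-and-bars argument, arranging $n$ labeled particles and $k-1$ arm-dividers in a line, gives $(n+k-1)!/(k-1)!$ configurations with $\epsilon = 0$ and $n \cdot (n+k-2)!/(k-1)!$ with $\epsilon = 1$, so $V = \tfrac{(n+k-2)!}{(k-1)!}(2n+k-1)$. The $1$-cells split into two families indexed by an arm $j$: \emph{leaf-to-center} moves, where a particle at the leaf $v_j$ with both the interior of $e_j$ and $v_0$ empty transits directly to $v_0$; and \emph{interior-to-center} moves, where the innermost interior particle on $e_j$ slides onto an empty $v_0$. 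A direct count gives $k \cdot n \cdot (n+k-3)!/(k-2)!$ for the first family. For the second, summing over the number of particles residing on arm $j$ and applying the hockey-stick identity
\[
\sum_{m=0}^{n-2}\binom{m+k-2}{k-2} = \binom{n+k-3}{k-1}
\]
yields $k \cdot n! \binom{n+k-3}{k-1}$. Combining and refactoring the two contributions gives $E = \tfrac{kn(n+k-2)!}{(k-1)!}$.

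A direct subtraction then produces $V - E = -(nk-2n-k+1)\tfrac{(n+k-2)!}{(k-1)!}$, so $1 - V + E = Q$, as required. The main obstacle is the bookkeeping for the interior-to-center $1$-cells: one must separate the enumeration by the number of particles occupying the active arm in order to put the count in hockey-stick form, and the final cancellation involves combining two terms with slightly different falling-factorial denominators into a single product. Sanity checks at $(n,k) = (2,3)$, where $V = E = 18$ gives $Q = 1$, and at $(n,k) = (3,3)$, where $V = 96$ and $E = 108$ gives $Q = 13$, confirm both the combinatorial counts and the closed form.
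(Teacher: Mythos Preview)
Your argument is correct. The paper does not supply its own proof of this proposition; it is stated as a citation to Ghrist \cite[Proposition 4.1]{Ghr01} and used only through its corollary on the rank of $H_{1}$. What you have done is reprove Ghrist's formula internally to the paper, using the L\"{u}tgehetmann cube complex $K_{n}(\Gamma_{k})$ of Theorem \ref{cube complex for configuration space}, which the paper records but exploits only for the spectral-sequence arguments in later sections.

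Your vertex and edge counts are correct. The stars-and-bars enumeration of $0$-cells and the two-family decomposition of $1$-cells (leaf-to-center versus interior-to-center) are the natural way to organize the count for the star, and the hockey-stick reduction cleanly collapses the interior-to-center sum. The two sanity checks you ran match both the cell counts and the closed form. Ghrist's original argument also proceeds by reducing to a $1$-dimensional combinatorial model and computing the Euler characteristic, so your approach is in the same spirit, though filtered through a model that postdates his paper. Within the internal logic of this paper the argument is entirely self-contained and valid; there is nothing in the paper's treatment to compare it against beyond the citation itself.
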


This proposition immediately implies that
\[
\text{rk}\Big(H_{1}\big(F_{n}(\Gamma_{k})\big)\Big)=1+(nk-2n-k+1)\frac{(n+k-2)!}{(k-1)!}.
\]

In the next section we recall the Mayer--Vietoris spectral sequence which we will use to analyze $K_{n+1}(\Gamma_{k})$ to get our representation stability results.

\section{The Mayer--Vietoris Spectral Sequence}\label{mayer vietoris}

We recall the Mayer--Vietoris spectral sequence, which we will use to prove our representation stability results.
After choosing a sufficiently nice cover of our configuration spaces, the vanishing of the $E^{2}_{1,0}$-entry of this spectral sequence will correspond to $H_{1}\big(F_{n+1}(\Gamma_{k})\big)$ being generated by the classes of $H_{1}\big(F_{n}(\Gamma_{k})\big)$ tensored with the fundamental class of $H_{0}\big(F_{1}(\R)\big)$ in $k$ different ways, i.e., $H_{1}\big(F_{n+1}(\Gamma_{k})\big)$ is generated by $H_{1}\big(F_{n}(\Gamma_{k})\big)$ and the FI$_{k,o}$-module structure of $H_{1}\big(F_{\bullet}(\Gamma_{k})\big)$.
Moreover, the vanishing of the $E^{2}_{2,0}$-entry of this spectral sequence will correspond to $H_{1}\big(F_{\bullet}(\Gamma_{k})\big)$ being a finitely presented FI$_{k,o}$-module.

Let $I$ be a countable index set, and let $\mathcal{U}=\{U_{i}\}_{i\in I}$ be an open cover of a space $X$.
The Mayer--Vietoris spectral sequence has $E^{1}$-page
\[
E^{1}_{p,q}=\bigoplus_{\{i_{0}, \dots, i_{p}\}}H_{q}(U_{i_{0}}\cap\cdots\cap U_{i_{p}})\implies H_{p+q}(X)
\]
with boundary map $d_{1}:E^{1}_{p,q}\to E^{1}_{p-1, q}$ given by the alternating sum of the face maps induced by
\[
U_{i_{0}}\cap\cdots\cap U_{i_{p}}\hookrightarrow U_{i_{0}}\cap\cdots\cap \widehat{U_{i_{j}}}\cap\cdots\cap U_{i_{p}}.
\]

\begin{figure}[h]
\centering
\begin{tikzpicture} \footnotesize
  \matrix (m) [matrix of math nodes, nodes in empty cells, nodes={minimum width=3ex, minimum height=5ex, outer sep=2ex}, column sep=3ex, row sep=3ex]{
 3    &  \bigoplus_{\{i_{0}\}} H_{3}(U_{i_{0}})  &\bigoplus_{\{i_{0}, i_{1}\}} H_{3}(U_{i_{0}}\cap U_{i_{1}}) &  \bigoplus_{\{i_{0}, i_{1}, i_{2}\}} H_{3}(U_{i_{0}}\cap U_{i_{1}}\cap U_{i_{2}})  & \bigoplus_{\{i_{0}, \dots, i_{3}\}} H_{3}(U_{i_{0}}\cap \cdots\cap U_{i_{3}})& \\  
 2    &  \bigoplus_{\{i_{0}\}} H_{2}(U_{i_{0}})  &\bigoplus_{\{i_{0}, i_{1}\}} H_{2}(U_{i_{0}}\cap U_{i_{1}}) &  \bigoplus_{\{i_{0}, i_{1}, i_{2}\}} H_{2}(U_{i_{0}}\cap U_{i_{1}}\cap U_{i_{2}})  & \bigoplus_{\{i_{0}, \dots, i_{3}\}} H_{2}(U_{i_{0}}\cap \cdots\cap U_{i_{3}})&  \\          
 1    &  \bigoplus_{\{i_{0}\}} H_{1}(U_{i_{0}})  &\bigoplus_{\{i_{0}, i_{1}\}} H_{1}(U_{i_{0}}\cap U_{i_{1}}) &  \bigoplus_{\{i_{0}, i_{1}, i_{2}\}} H_{1}(U_{i_{0}}\cap U_{i_{1}}\cap U_{i_{2}})  & \bigoplus_{\{i_{0}, \dots, i_{3}\}} H_{1}(U_{i_{0}}\cap \cdots\cap U_{i_{3}})& \\             
  0    &  \bigoplus_{\{i_{0}\}} H_{0}(U_{i_{0}})  &\bigoplus_{\{i_{0}, i_{1}\}} H_{0}(U_{i_{0}}\cap U_{i_{1}}) &  \bigoplus_{\{i_{0}, i_{1}, i_{2}\}} H_{0}(U_{i_{0}}\cap U_{i_{1}}\cap U_{i_{2}})  & \bigoplus_{\{i_{0}, \dots, i_{3}\}} H_{0}(U_{i_{0}}\cap \cdots\cap U_{i_{3}})& \\       
 \quad\strut &  0  &  1  & 2  &3&\\}; 

\draw[thick] (m-1-1.east) -- (m-5-1.east) ;
\draw[thick] (m-5-1.north) -- (m-5-5.north east) ;
\end{tikzpicture}
\caption{The $E^{1}$-page of the Mayer--Vietoris spectral sequence.}
\label{MayerVietoris}
\end{figure}

The $E^{2}_{1,0}$- and $E^{2}_{2,0}$-entries of this spectral sequence will be of great interest to us.
As such, we relate them to a simplicial complex arising from the open cover of configuration space.

Given a set of indices $I$ and $\mathcal{U}=\{U_{i}\}_{i\in I}$ a family of sets, the \emph{pseudo-nerve complex} $N(\mathcal{U})$ of $\mathcal{U}$ is the abstract $\Delta$-complex whose $p$-simplices are the path-components of the intersection of $p+1$ distinct elements of $\mathcal{U}$, i.e., $U_{i_{0}}\cap \cdots\cap U_{i_{p}}$.
By ordering $I$, we can define a boundary map on this simplicial complex by setting
\[
d(U_{i_{0}}\cap \cdots\cap U_{i_{p}})=\sum_{j=0}^{p}(-1)^{k-1}U_{i_{0}}\cap\cdots\cap \widehat{U_{i_{j}}}\cap\cdots\cap U_{i_{p}},
\]
for each component of $U_{i_{0}}\cap \cdots\cap U_{i_{p}}$.
If all of the intersections are empty or path-connected, we call the resulting simplicial complex the \emph{nerve complex} of $\mathcal{U}$.

The following proposition immediately follows from the definitions, and relates the Mayer--Vietoris spectral sequence to the homology of the (pseudo)-nerve complex given that the cover is sufficiently nice.

\begin{prop}\label{E2 of mayer--vietoris}
Let $J$ be a countable index set and $\mathcal{U}=\{U_{j}\}_{j\in J}$ an open cover of $X$ such that each intersection $U_{i_{0}}\cap\cdots\cap U_{i_{p}}$ is either empty or path-connected.
Then the $E^{2}_{p,0}$-entry of the Mayer--Vietoris spectral sequence is the $p^{\text{th}}$-homology of $N(\mathcal{U})$.
\end{prop}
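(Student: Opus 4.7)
The plan is to identify the bottom row $(E^1_{\bullet,0}, d_1)$ of the Mayer--Vietoris spectral sequence directly with the augmented simplicial chain complex of the nerve $N(\mathcal{U})$, and then take homology. The proof should be almost entirely bookkeeping, so I expect no serious obstacle; the only point requiring a moment of care is that the sign convention of the Mayer--Vietoris $d_1$ matches the simplicial boundary, and this is where the connectivity hypothesis does real work (mere non-emptiness of the intersections would not suffice).

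First, I would unpack $E^1_{p,0}$. By the definition of the spectral sequence,
\[
E^1_{p,0} = \bigoplus_{\{i_0,\ldots,i_p\}} H_0\bigl(U_{i_0}\cap\cdots\cap U_{i_p}\bigr),
\]
where the sum runs over $(p+1)$-element subsets of $J$ (after fixing an ordering of $J$). By hypothesis each such intersection is either empty (contributing $0$) or connected (contributing a canonical copy of $\mathbb{Z}$ generated by the class of a point). Thus $E^1_{p,0}$ is canonically the free abelian group on the set of $(p+1)$-element index sets whose intersection is nonempty, which is exactly $C_p\bigl(N(\mathcal{U})\bigr)$.

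Next, I would check that $d_1$ coincides with the simplicial boundary under this identification. The inclusion $U_{i_0}\cap\cdots\cap U_{i_p}\hookrightarrow U_{i_0}\cap\cdots\cap\widehat{U_{i_j}}\cap\cdots\cap U_{i_p}$ sends the unique $H_0$-generator on the left to the unique $H_0$-generator on the right (this is where connectivity is used: both sides have a single component, so the map on $H_0$ is the identification $\mathbb{Z}\to\mathbb{Z}$, or the zero map if the source is empty). The alternating sum of these maps therefore acts on a generator by
\[
d_1\bigl(U_{i_0}\cap\cdots\cap U_{i_p}\bigr) = \sum_{j=0}^p (-1)^j\, U_{i_0}\cap\cdots\cap\widehat{U_{i_j}}\cap\cdots\cap U_{i_p},
\]
with terms whose intersection is empty interpreted as zero, which is precisely the simplicial boundary operator of $N(\mathcal{U})$.

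Finally, taking homology in the $p$-direction gives
\[
E^2_{p,0} = \ker(d_1)/\operatorname{im}(d_1) = H_p\bigl(N(\mathcal{U})\bigr),
\]
which is the claimed identification.
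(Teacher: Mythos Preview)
Your proof is correct and is precisely the natural elaboration of what the paper intends: the paper does not give a proof at all, stating only that the proposition ``immediately follows from the definitions,'' and your argument is exactly that unpacking. The one substantive observation you make---that connectivity (not just non-emptiness) is needed so that each $H_0$ is a single copy of $\mathbb{Z}$ and the face maps become identities on generators---is the right place to locate the hypothesis.
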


For more on the Mayer--Vietoris spectral sequence see, for example, \cite[Chapter 7.4]{brown2012cohomology}.

In the next section we will find a good cover of $K_{n+1}(\Gamma_{k})$ by sets that are homotopy equivalent to $K_{n}(\Gamma_{k})$ and calculate the first homology of the resulting nerve complex.
This will allow us to use Proposition \ref{E2 of mayer--vietoris} to determine when the first homology of $F_{n+1}(\Gamma_{k})$ is governed by the $E^{1}_{0,1}$-term, a condition equivalent to $H_{1}\big(F_{n+1}(\Gamma_{k})\big)$ being generated by $H_{1}\big(F_{n}(\Gamma_{k})\big)$ and the FI$_{k,o}$-module structure of $H_{1}\big(F_{\bullet}(\Gamma_{k})\big)$.

\section{Representation Stability for $H_{i}\big(F_{\bullet}(\Gamma_{k})\big)$}\label{rep stab section}

We prove that the homology of $F_{\bullet}(\Gamma_{k})$ satisfies a notion of representation stability, namely that $H_{i}\big(F_{\bullet}(\Gamma_{k})\big)$ is a finitely generated FI$_{k,o}$-module, and calculate its generation degree as such.
When $i\neq 1$, this problem is easy.
Proposition \ref{only homology in degree 0,1} shows that if $i>1$, then $H_{i}\big(F_{\bullet}(\Gamma_{k})\big)$ is the trivial FI$_{k,o}$-module.
Since $F_{n}(\Gamma_{k})$ is connected and adding a new point to configuration space yields a nontrivial homology class, it follows that $H_{0}\big(F_{\bullet}(\Gamma_{k})\big)$ is finitely generated in degree $0$ as an FI$_{k,o}$-module.
We recall the statement of our representation stability results for all $i$ and spend the rest of the section proving it in the case $i=1$.

\begin{T1}
  \thmtext
\end{T1} 

In order to prove Theorem \ref{rep stability for star} for $i=1$, we take advantage of the Mayer--Vietoris spectral sequence to compute the first homology of $F_{n+1}(\Gamma_{k})$ by covering $F_{n+1}(\Gamma_{k})$ with $k(n+1)$ copies of $F_{n}(\Gamma_{k})$, each of which corresponds to adding a point at one of the $k$ leaves of $\Gamma_{k}$.
We prove that the vanishing of the $E^{2}_{1,0}$-entry of this spectral sequence for $n$ sufficiently large is equivalent to $H_{1}\big(F_{\bullet}(\Gamma_{k})\big)$ being a finitely generated FI$_{k,o}$-module.
Throughout this section we will use $F_{n}(\Gamma_{k})$ interchangeably with its $1$-dimensional cube complex $K_{n}(\Gamma_{k})$.

\begin{prop}\label{retract to get good cover}
For $i\in [n+1]$ and $j\in [k]$, let $U_{i,j}$ be the subspace of $F_{n+1}(\Gamma_{k})$ consisting of configurations where particle $i$ is the outermost particle on edge $j$.
Fix $0<\epsilon<\frac{1}{2}$, and extend $U_{i,j}$ to $U_{i,j}'$ by taking the open $\epsilon$-ball around $U_{i,j}$.
Then $U_{i,j}$ is a deformation retract of $U_{i,j}'$, and $U_{i,j}$ is homotopy equivalent to $F_{n}(\Gamma_{k})$.
\end{prop}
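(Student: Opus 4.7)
The proposition has two claims. The first, that $U_{i,j}'$ deformation retracts onto $U_{i,j}$, I would prove by exploiting the $1$-dimensional cube complex structure on $F_{n+1}(\Gamma_k)$ given by L\"utgehetmann's Theorem \ref{cube complex for configuration space}. In this model $U_{i,j}$ is a closed subcomplex, and its open $\epsilon$-neighborhood $U_{i,j}'$ is obtained by attaching ``spikes'' of length $\epsilon$ along each $1$-cell of $K_{n+1}(\Gamma_k)$ adjacent to, but not contained in, $U_{i,j}$. The condition $\epsilon < \tfrac{1}{2}$ guarantees the far end of every spike is strictly interior to its $1$-cell, so the spikes deformation retract linearly onto their basepoints in $U_{i,j}$.

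For the second claim, $U_{i,j} \simeq F_n(\Gamma_k)$, I would proceed in three steps. First, I would deformation retract $U_{i,j}$ onto its closed subspace $V_{i,j} := \{x \in U_{i,j} : x_j = v_i\}$ by sliding particle $j$ outward along edge $i$ to the leaf $v_i$. Because $j$ is outermost on edge $i$ by definition of $U_{i,j}$, no other particle lies strictly between $x_j$ and $v_i$, so the slide avoids collisions and stays inside $U_{i,j}$ throughout. Second, the forgetful map $(x_1,\ldots,x_{n+1}) \mapsto (x_\ell)_{\ell \ne j}$ is a homeomorphism $V_{i,j} \cong F_n(\Gamma_k \setminus \{v_i\})$.

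Third, I would exhibit a homotopy equivalence $F_n(\Gamma_k \setminus \{v_i\}) \simeq F_n(\Gamma_k)$. Parameterize edge $i$ as $[0,1]$ with $v_i = 1$, let $\iota: \Gamma_k \setminus \{v_i\} \hookrightarrow \Gamma_k$ be the inclusion, and let $g: \Gamma_k \to \Gamma_k \setminus \{v_i\}$ linearly compress edge $i$ onto $[0,\tfrac{1}{2}]$ and be the identity on the other edges. Both compositions $\iota \circ g$ and $g \circ \iota$ are homotopic to the identity via the straight-line homotopy $h_t(x) = x(1 - t/2)$ on edge $i$ (extended by the identity on other edges). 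The key point is that $h_t$ is injective for every $t$, so it lifts coordinatewise (and $S_n$-equivariantly) to a homotopy on $F_n$, exhibiting $g_*$ and $\iota_*$ as mutually inverse homotopy equivalences between $F_n(\Gamma_k)$ and $F_n(\Gamma_k \setminus \{v_i\})$.

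The main subtlety is ensuring particles remain distinct throughout every deformation. A naive collapse of the half-open edge in the last step would cause particles in the collapsed region to coincide, which is precisely why the homotopy $h_t$ must be injective at each time rather than an honest strong deformation retract fixing a compact sub-star pointwise; the threshold $\epsilon < \tfrac{1}{2}$ in the first step arises for the analogous reason of preventing neighboring spike retractions from interfering with each other along a shared $1$-cell.
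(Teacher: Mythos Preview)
Your proof is correct and follows essentially the same approach as the paper's: both use the $1$-dimensional cube-complex structure to retract the $\epsilon$-neighborhood, then push particle $j$ out to the leaf and forget it. Your treatment is in fact more careful than the paper's, which asserts that forgetting the leaf ``yields a space homeomorphic to $\Gamma_{k}$'' without justifying why the resulting homotopy equivalence $\Gamma_k \setminus \{v_i\} \simeq \Gamma_k$ lifts to configuration spaces; your explicit injective isotopy $h_t$ fills exactly that gap.
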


\begin{proof}
That $U_{i,j}$ is a deformation retract of $U_{i,j}'$ is immediate from the fact that $F_{n+1}(\Gamma_{k})$ can be viewed as a graph with edges of length $1$.

To see that $U_{i,j}$ is homotopy equivalent to $F_{n}(\Gamma_{k})$, note that forgetting the particle labeled $i$ in $U_{i,j}$, yields a configuration space on $n$ points in $\Gamma_{k}$.
Since particle $i$ is the outermost particle on its edge we may assume that it is at the leaf.
Thus, forgetting it yields a space homeomorphic to $\Gamma_{k}$, implying that $U_{i,j}$ is homotopy equivalent to $F_{n}(\Gamma_{k})$.
\end{proof}

Note that the $U_{i,j}$ correspond to the image of $F_{n}(\Gamma_{k})$ in $F_{n+1}(\Gamma_{k})$ under the insertion map $\iota_{n, j}$ and an action of $S_{n+1}$.
See Figure \ref{U11} for an example of a $U_{i,j}$ in $F_{2}(\Gamma_{3})$.

\begin{figure}[h]
\centering
\captionsetup{width=.8\linewidth}
\includegraphics[width = 12cm]{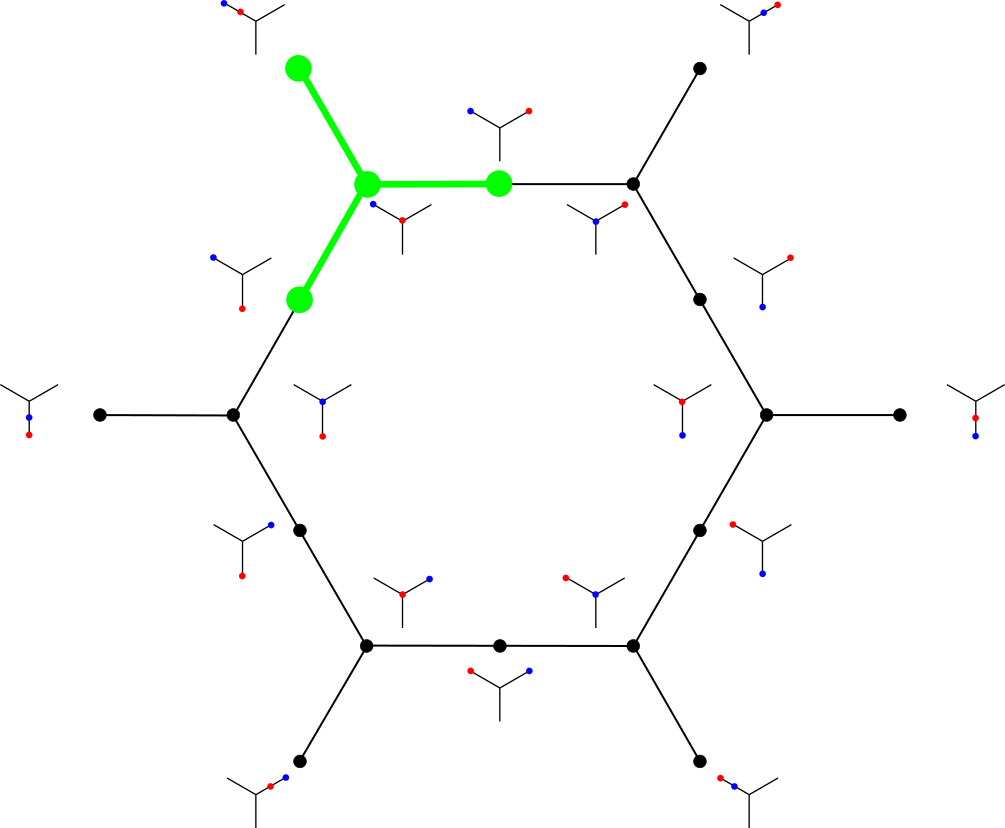}
\caption{The green part of $F_{2}(\Gamma_{3})$ is the region $U_{1, 1}$, where blue particle can be thought of as having label $1$.
Note that this region is homotopy equivalent to $F_{1}(\Gamma_{3})$.
}
\label{U11}
\end{figure}

\begin{prop}\label{good cover}
Let $U_{i,j}'$ be the open sets described in Proposition \ref{retract to get good cover}.
Then, for $n\ge 1$, the collection $\mathcal{U}_{n+1, k}:=\{U'_{i,j}\}_{i\in [n+1], j\in [k]}$ is an open cover of $F_{n+1}(\Gamma_{k})$.
\end{prop}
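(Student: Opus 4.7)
The plan is to show that every configuration $\mathbf{x}\in F_{n+1}(\Gamma_{k})$ already lies in some $U_{i,j}$, so that a fortiori $\mathbf{x}\in U'_{i,j}\supseteq U_{i,j}$. The $\epsilon$-fattening is needed only to guarantee that the $U'_{i,j}$ are open (as metric neighborhoods in the cube complex $K_{n+1}(\Gamma_{k})$); it plays no role in witnessing the covering. With this reduction, the proof collapses to a pigeonhole observation about the central vertex of $\Gamma_{k}$.

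The main step is as follows. The star graph $\Gamma_{k}$ has a unique central vertex $c$, and in any ordered configuration of distinct particles at most one particle can sit at $c$. Since $n+1\ge 2$, at least one particle therefore lies on a half-open arm $(c,v_{i}]$ for some leaf $v_{i}$ — equivalently, at the leaf $v_{i}$ itself or in the interior of the edge $e_{i}$. For each such edge carrying at least one particle, the outermost particle on $e_{i}$ (the one closest to $v_{i}$) is well defined. Choosing such an edge $e_{i}$ and letting $j$ denote that outermost particle produces a pair $(i,j)$ with $\mathbf{x}\in U_{i,j}$, as required.

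I do not expect any real obstacle, but two minor points deserve a sentence of care. First, one should check the pigeonhole uniformly across both $0$-cells and interiors of $1$-cells of $K_{n+1}(\Gamma_{k})$; since a $1$-cell of the cube complex moves only a single particle between a vertex and an edge-interior position while fixing the other particles, the bound ``at most one particle at $c$'' persists throughout the $1$-cell, so a common $(i,j)$ can be chosen along it. Second, the hypothesis $n\ge 1$ is essential: for $n+1=1$ the single configuration consisting of a lone particle at $c$ fails to lie in any $U_{i,j}$, so the covering genuinely breaks down at $n=0$, confirming that the stated range is sharp.
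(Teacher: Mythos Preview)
Your proof is correct and follows essentially the same approach as the paper: at most one particle can occupy the central vertex, so for $n+1\ge 2$ some particle lies on an edge, and taking the outermost particle $j$ on that edge $i$ places the configuration in $U_{i,j}\subseteq U'_{i,j}$. Your additional remarks about uniformity across the cells of $K_{n+1}(\Gamma_{k})$ and the sharpness of the bound $n\ge 1$ are more detailed than what the paper records, but the core argument is identical.
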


\begin{proof}
Since $n\ge 1$, every point in $F_{n+1}(\Gamma_{k})$ corresponds to a configuration where a particle is on some edge of $\Gamma_{k}$ as only one particle can be at the central vertex.
Given such an edge $j$, there is an outermost particle $i$ on $j$.
This configuration is in $U'_{i,j}$, so $\mathcal{U}$ is an open cover of $F_{n+1}(\Gamma_{k})$.
\end{proof}

From now on we will write $E^{r}_{p,q}[\Gamma_{k}](n+1)$ for the Mayer--Vietoris spectral sequence arising from the open cover $\mathcal{U}_{n+1, k}$ of $F_{n+1}(\Gamma_{k})$.

Every non-trivial intersection $U'_{i_{0}, j_{0}}\cap\cdots \cap U'_{j_{l}, i_{l}}$ is connected, as it is homotopy equivalent to $F_{n-l}(\Gamma_{k})$: It is the space of configurations where $l+1$ distinct particles $i_{0}, \dots, i_{l}$ are the outermost particles on $l+1$ distinct edges $j_{0}, \dots, j_{l}$, and the remaining $n-l$ particles may be anywhere else in $\Gamma_{k}$ as long as they are not past any of the particles $i_{l}$, i.e., they can move about a graph homeomorphic to $\Gamma_{k}$.
It follows that $N(\mathcal{U}_{n+1, k})^{(1)}$ is a \emph{simple} graph, i.e., there is at most one edge between any two vertices and there are no loops.
With that in mind, we note the following.

\begin{prop}\label{3 cycles are contractible}
For all $k\ge 3$ and all $n\ge 2$, every $1$-cycle of length $3$ in $N(\mathcal{U}_{n+1, k})$ is null-homotopic.
\end{prop}

\begin{proof}
For $(i,j)\neq(i', j')$, intersection of $U_{i,j}'$ and $U'_{i',j'}$ is non-trivial if and only if $i\neq i'$ and $j\neq j'$, as if $j=j'$, then the outermost particle on edge $j$ of $\Gamma_{k}$ would have to be labeled both $i$ and $i'$, and if $i=i'$, then the particle labeled $i$ would have to be the outermost particle on edges $j$ and $j'$.
Therefore, a length $3$ cycle in $N(\mathcal{U}_{n+1, k})$ must be of the form
\[
U_{i,j}'\cap U_{i',j'}'-U_{i',j'}'\cap U_{i'',j''}'+U_{i'',j''}'\cap U_{i,j}',
\]
where $i$, $i'$, and $i''$ are distinct particles and $j$, $j'$, and $j''$ are distinct edges.
This is the case in any $1$-cycle of length $3$, so since $k\ge 3$ and $n+1\ge 3$, we have that $U_{i,j}'\cap U_{i',j'}'\cap U_{i'',j''}'\neq \emptyset$, as the space of configurations where particle $i$ is the outermost particle on edge $j$, particle $i'$ is the outermost particle on edge $j'$, and particle $i''$ is the outermost particle on edge $j''$ is non-empty, see Figure \ref{length3cycleinnerve}.
Moreover,
\[
U_{i,j}'\cap U_{i',j'}'-U_{i',j'}'\cap U_{i'',j''}'+U_{i'',j''}'\cap U_{i,j}'=d(U_{i,j}'\cap U_{i',j'}'\cap U_{i'',j''}'),
\]
in $N(\mathcal{U}_{n+1, k})$, so this $1$-cycle of length $3$ is null-homotopic in $N(\mathcal{U}_{n+1, k})$.
\end{proof}

\begin{figure}[h]
\centering
\captionsetup{width=.8\linewidth}
\includegraphics[width = 8cm]{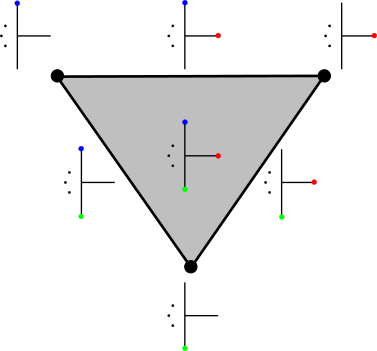}
\caption{The $2$-cell in $N(\mathcal{U}_{n+1, k})$ that makes $1$-cycles of length $3$ null-homotopic.
}
\label{length3cycleinnerve}
\end{figure}

Next, we note that there are only five types of length $3$ paths in $N(\mathcal{U}_{n+1, k})^{(1)}$ that are not immediately homotopy equivalent to a length $2$ path in $N(\mathcal{U}_{n+1, k})$. 
We will show that each of these five paths are in fact homotopy equivalent to a path of length $2$, allowing us to apply an inductive argument to show that for large enough $n$ all $1$-cycles are null-homotopic in $N(\mathcal{U}_{n+1, k})$.

\begin{prop}\label{5 types of paths}
For all $k\ge 3$, there are five types of paths of length $3$ in $N(\mathcal{U}_{n+1, k})$ such that no three consecutive distinct vertices correspond to open sets $U'_{i,j}$, $U'_{i',j'}$, and $U'_{i'',j''}$ with non-trivial intersection $U_{i,j}'\cap U_{i',j'}'\cap U_{i'',j''}'$.
\end{prop}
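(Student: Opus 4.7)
The plan is to identify each vertex of $N(\mathcal{U}_{k,n+1})^{(1)}$ with a pair $(i,j) \in [k] \times [n+1]$ and reduce the classification to a finite combinatorial bookkeeping problem. Two facts, already implicit in the proof of Proposition \ref{3 cycles are contractible}, will drive the entire argument: an edge joins $(i,j)$ to $(i',j')$ precisely when $i \neq i'$ and $j \neq j'$, and a triple intersection $U'_{i,j} \cap U'_{i',j'} \cap U'_{i'',j''}$ is non-empty if and only if the three edge-indices are distinct and the three particle-indices are distinct.

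Given a length-$3$ path $v_0 v_1 v_2 v_3$ with $v_a=(i_a,j_a)$, the edge-existence condition already forces $i_a \neq i_{a+1}$ and $j_a \neq j_{a+1}$ for $a=0,1,2$. The failure of the triple-intersection condition on $\{v_0,v_1,v_2\}$ then demands $i_0=i_2$ or $j_0=j_2$, and the failure on $\{v_1,v_2,v_3\}$ demands $i_1=i_3$ or $j_1=j_3$. Vertex distinctness, in the form $v_0 \neq v_2$ and $v_1 \neq v_3$, makes these alternatives exclusive within each triple, so exactly four coarse profiles arise:
\[
(i_0=i_2,\ i_1=i_3),\quad (i_0=i_2,\ j_1=j_3),\quad (j_0=j_2,\ i_1=i_3),\quad (j_0=j_2,\ j_1=j_3).
\]

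The fifth type will emerge by splitting one of the ``homogeneous'' profiles above according to an auxiliary coincidence that is not forced by the triple conditions. Concretely, in the profile $(j_0=j_2,\,j_1=j_3)$---or, symmetrically, in $(i_0=i_2,\,i_1=i_3)$---the remaining coordinates $i_0,i_1,i_2,i_3$ need only be pairwise distinct in consecutive positions, leaving the possibility $i_0=i_3$ (respectively $j_0=j_3$) open. Separating this from the generic subcase yields a geometrically distinct path template (a ``four-cycle'' template rather than an ``open path'' template) and accounts for the fifth entry in the list.

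The main obstacle is organizational rather than mathematical: one must verify exhaustiveness (every qualifying length-$3$ path falls into one of the five templates) and mutual exclusivity (no two templates describe the same path after the natural relabelings). Both checks reduce to running through the four base cases above and, in each, tabulating the admissible further equalities among the $(i_a,j_a)$ subject to $v_a \neq v_b$, $i_a \neq i_{a+1}$, and $j_a \neq j_{a+1}$. No geometric input is required beyond the edge and triple-intersection criteria already established; the role of this proposition is simply to set up the short finite list of cases to be treated individually in the homotopy argument that follows.
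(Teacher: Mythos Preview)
Your proposal is correct and mirrors the paper's approach; in fact the paper offers no argument beyond listing the five types and pointing to a figure, so your explicit derivation (four base profiles from the two exclusive alternatives on each consecutive triple, then one further split) is more thorough than what appears there. Two small points of alignment: the paper splits the profile $(i_0=i_2,\,i_1=i_3)$ according to whether $j_0=j_3$ (your parenthetical ``symmetric'' option), not the $j$-profile you lead with; and in either homogeneous profile, vertex distinctness already forces $i_0\neq i_2$, $i_1\neq i_3$ (resp.\ $j_0\neq j_2$, $j_1\neq j_3$) beyond the consecutive-edge constraints, so it is only the single pair $\{0,3\}$ in the free coordinate that remains unconstrained---exactly as your conclusion uses, though your phrase ``pairwise distinct in consecutive positions'' understates this.
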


The five types of paths of length 3 in $N(\mathcal{U}_{n+1, k})^{(1)}$ that are not immediately homotopy equivalent to a path of length $2$ in $N(\mathcal{U}_{n+1, k})$ are of the following forms:
\begin{enumerate}
\item The third vertex $U'_{i_{3}, j_{3}}$ is such that $i_{3}=i_{1}$ and the fourth vertex $U'_{i_{4}, j_{4}}$ is such that $i_{4}=i_{2}$, Moreover, $j_{1},j_{2},$ and $j_{3}$ are distinct and so are $j_{2}$, $j_{3}$, $j_{4}$.
\item The third vertex $U'_{i_{3}, j_{3}}$ is such that $i_{3}=i_{1}$, the fourth vertex $U'_{i_{4}, j_{4}}$ is such that $i_{2}$, $i_{3}$, and $i_{4}$ are all distinct.
Additionally, $j_{1}$, $j_{2}$, and $j_{3}$ are distinct and $j_{2}=j_{4}$.
\item The third vertex $U'_{i_{3}, j_{3}}$ is such that $i_{3}\neq i_{1}$ and $j_{3}=j_{1}$, and the fourth vertex $U'_{i_{4}, j_{4}}$ is such that $i_{4}=i_{2}$ and $j_{2}$, $j_{3}$, and $j_{4}$ are distinct.
\item The third vertex $U'_{i_{3}, j_{3}}$ is such that $i_{3}\neq i_{1}$ and $j_{3}=j_{1}$, and the fourth vertex $U'_{i_{4}, j_{4}}$ is such that $i_{4}=i_{1}$ and $j_{4}=j_{2}$.
\item The third vertex $U'_{i_{3}, j_{3}}$ is such that $i_{3}\neq i_{1}$ and $j_{3}=j_{1}$, and the fourth vertex $U'_{i_{4}, j_{4}}$ is such that $i_{4}$ is distinct from  $i_{1}$, $i_{2}$, and $i_{3}$, which are all distinct, and $j_{4}=j_{2}$.
\end{enumerate}
See Figure \ref{typesoflength4pathinnerve} for a visual representation of these five paths.

\begin{figure}[h]
\centering
\captionsetup{width=.8\linewidth}
\includegraphics[width = 12cm]{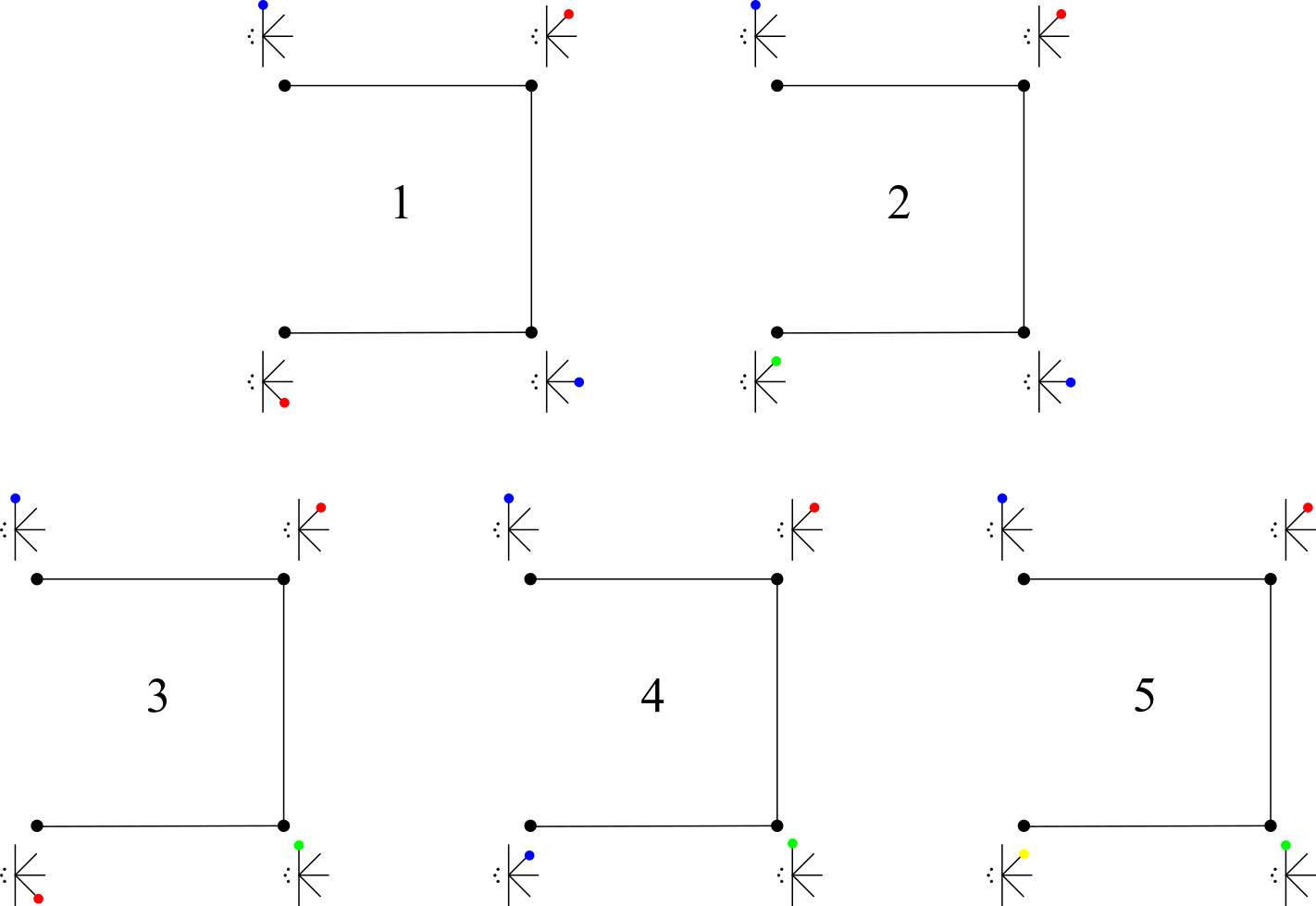}
\caption{The five possible paths of length $3$ in $N(\mathcal{U}_{n+1, k})^{(1)}$ that are not immediately homotopy equivalent to a path of length $2$ in $N(\mathcal{U}_{n+1, k})$.
There are several more types of paths of length $3$ in $N(\mathcal{U}_{n+1, k})^{(1)}$, though for each such path either the first three vertices or the last three vertices lie in the same $2$-simplex in $N(\mathcal{U}_{n+1, k})$, yielding a shorter homotopy equivalent path.
}
\label{typesoflength4pathinnerve}
\end{figure}

Next, we determine for what values of $k$ and $n$ can we shorten paths of length $3$ in $N(\mathcal{U}_{n+1, k})$ to paths of length $2$.
This will show us when $1$-cycles in $N(\mathcal{U}_{n+1, k})$ are null-homotopic.

\begin{prop}\label{shorten paths of length 3}
For $k=3$ and $n\ge 4$, $k=4$ and $n\ge 3$, and $k\ge 5$ and $n\ge 2$, every path of length $3$ in $N(\mathcal{U}_{n+1, k})$ is homotopy equivalent to a path of length $2$ in $N(\mathcal{U}_{n+1, k})$.
\end{prop}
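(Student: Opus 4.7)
The plan is to handle the five types of length-$3$ paths from Proposition~\ref{5 types of paths} in turn. For each type, the goal is to produce a disc in $N(\mathcal{U}_{k,n+1})$ whose boundary is the concatenation of $\gamma$ with the reverse of a length-$2$ path having the same endpoints. Since a $p$-simplex of the nerve is a set of vertices with pairwise distinct $i$-coordinates and pairwise distinct $j$-coordinates, producing such a disc amounts to triangulating a polygon by such triples of $2$-simplices.

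The workhorse is the pentagon fan: given $\gamma = v_1 v_2 v_3 v_4$, one seeks a witness vertex $w = (i_w, j_w)$ such that $i_w$ avoids every $i$-coordinate of $v_1, \ldots, v_4$ and $j_w$ avoids every $j$-coordinate. Each triple $\{w, v_\ell, v_{\ell+1}\}$ is then a $2$-simplex, fanning the pentagon $v_1 v_2 v_3 v_4 w$ and realizing the homotopy $\gamma \simeq v_1 w v_4$. Tallying the distinct $i$- and $j$-coordinates in each type --- Type~1 uses up to $4$ and $2$; Types~2 and 3 use $3$ and $3$; Type~4 uses $2$ and $3$; Type~5 uses $2$ and $4$ --- yields the pentagon-fan ranges: Type~1 when $k \ge 5$; Types~2 and 3 when $k \ge 4$ and $n \ge 3$; Type~4 when $k \ge 3$ and $n \ge 3$; and Type~5 when $k \ge 3$ and $n \ge 4$. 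The remaining cases arise exactly when the $i$- or $j$-coordinates of $\gamma$ saturate $[k]$ or $[n+1]$; these are treated by introducing a second auxiliary vertex and triangulating a hexagon, the extra edge or particle label provided by the hypothesis being precisely the slack that makes each triangle of the triangulation a genuine $2$-simplex.

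The main technical obstacle is Type~1 for $k = 3$, where (after relabelling) $\gamma = (i_1, j_1)(i_2, j_2)(i_3, j_1)(i_1, j_2)$ uses all three edges of $\Gamma_3$. No vertex of the nerve is joined to all four of $v_1, \ldots, v_4$, so neither the pentagon fan nor any star-filling from an interior vertex applies. Under the hypothesis $n \ge 4$ there are at least three particle labels outside $\{j_1, j_2\}$, and one constructs a subdivided filling using two auxiliary vertices whose $i$-coordinates reuse $\{i_1, i_2, i_3\}$ but whose $j$-coordinates are fresh; a direct bookkeeping then verifies that each triangle of the subdivision is a valid $2$-simplex of the nerve. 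The sharpness of the bound $n \ge 4$ mirrors the failure of simple connectivity of the chessboard complex $\Delta_{3, n+1}$ for $n \le 3$.
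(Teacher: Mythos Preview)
Your approach is correct and is, at bottom, the same as the paper's: both arguments exhibit, for each of the five path types of Proposition~\ref{5 types of paths}, an explicit $2$-chain in $N(\mathcal{U}_{k,n+1})$ whose boundary is $\gamma$ minus a length-$2$ path with the same endpoints. The paper carries this out entirely pictorially, giving a separate figure of filled $2$-simplices for each of the three regimes $k=3$, $k=4$, $k\ge 5$; you instead organize the construction by a uniform pentagon-fan strategy, falling back on a two-vertex subdivision exactly when one coordinate is saturated, and singling out Type~1 at $k=3$ as the one case requiring three fresh particle labels. Your recognition of $N(\mathcal{U}_{k,n+1})$ as the chessboard complex $\Delta_{k,n+1}$ is a genuine addition not made explicit in the paper, and it explains conceptually why the thresholds in the statement match the known simple-connectivity range for chessboard complexes. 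The main thing your write-up leaves implicit is the actual triangulation in the saturated cases (``a direct bookkeeping then verifies\ldots''); since the nerve is a flag complex this bookkeeping is routine, but to make the proof self-contained you should display at least one of these hexagon fillings---for instance the six-triangle disc for Type~1 at $k=3$ with auxiliary vertices $(i_2,c)$, $(i_3,d)$, $(i_1,e)$---just as the paper does via its figures.
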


\begin{proof}
We only need to consider paths $\gamma$ of length $3$ whose vertices are distinct such that for any three consecutive vertices corresponding to sets $U'_{i,j}$, $U'_{i',j'}$, and $U'_{i'', j''}$ we have that $U'_{i,j}\cap U'_{i',j'}\cap U'_{i'', j''}=\emptyset$, as if $U'_{i,j}\cap U'_{i',j'}\cap U'_{i'', j''}\neq\emptyset$, then Proposition \ref{3 cycles are contractible} proves we can shorten our path.

It follows that we only need to consider the $5$ types of paths following Proposition \ref{5 types of paths}.
We prove that for each such $\gamma$ there is a path $\gamma'$ of length $2$ in $N(\mathcal{U}_{n+1, k})$ such that $\gamma-\gamma'$ is the boundary of a $2$-chain in $N(\mathcal{U}_{n+1, k})$.
We do so for $k=3$ and $n\ge 4$ in Figure \ref{homologouspathnerveGamma3}, for $k=4$ and $n\ge 3$ in Figure \ref{homologouspathnerveGamma4}, and for $k\ge 5$ and $n\ge 2$ in Figure \ref{homologouspathnerveGamma5}.

\begin{figure}[h]
\centering
\captionsetup{width=.8\linewidth}
\includegraphics[width = 12cm]{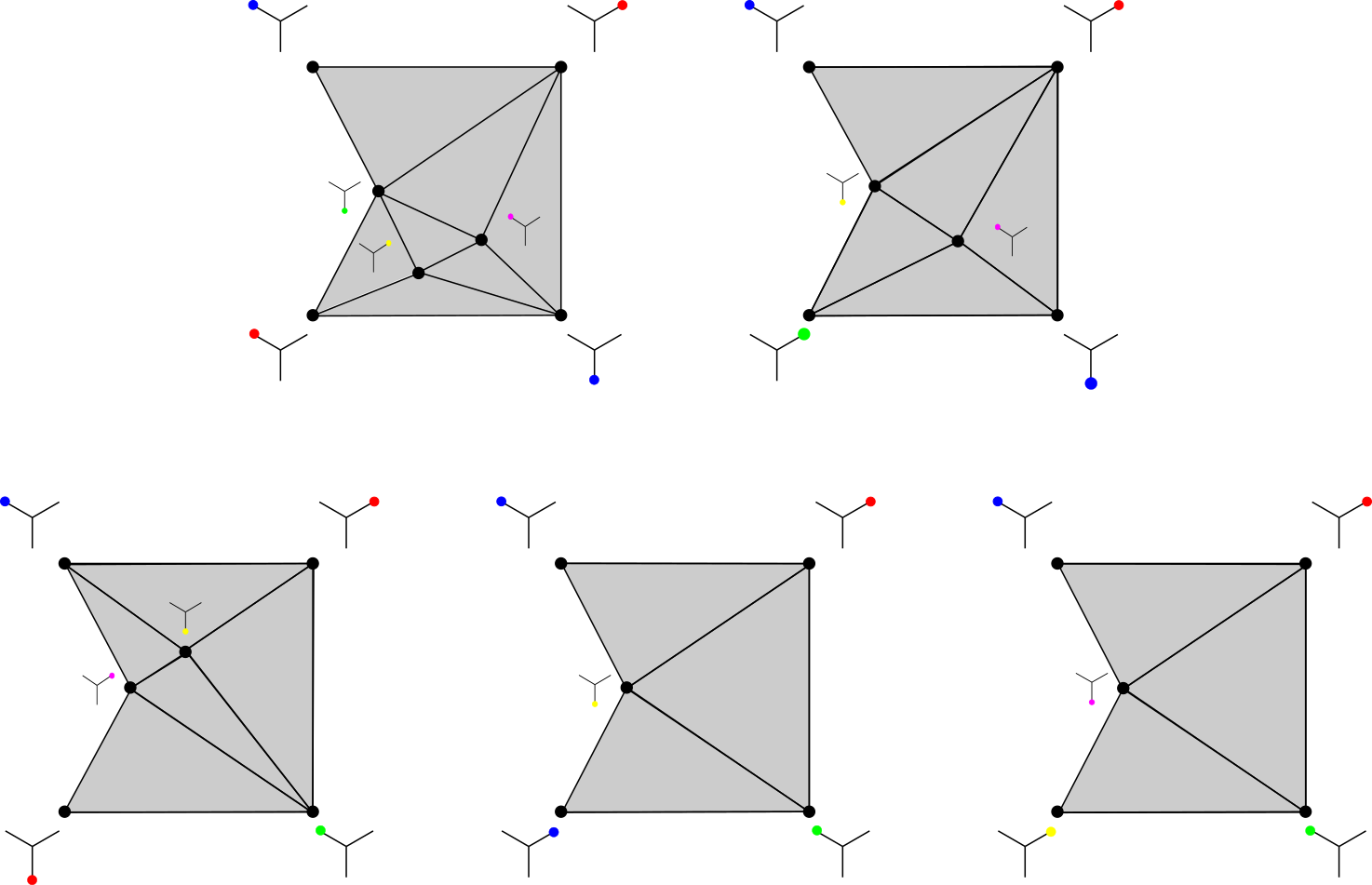}
\caption{The collections of $2$-cells in $N(\mathcal{U}_{n+1, k})$ that allow us to shorten the $5$ paths of Proposition \ref{5 types of paths} when $k=3$ and $n\ge4$.
}
\label{homologouspathnerveGamma3}
\end{figure}

\begin{figure}[h]
\centering
\captionsetup{width=.8\linewidth}
\includegraphics[width = 12cm]{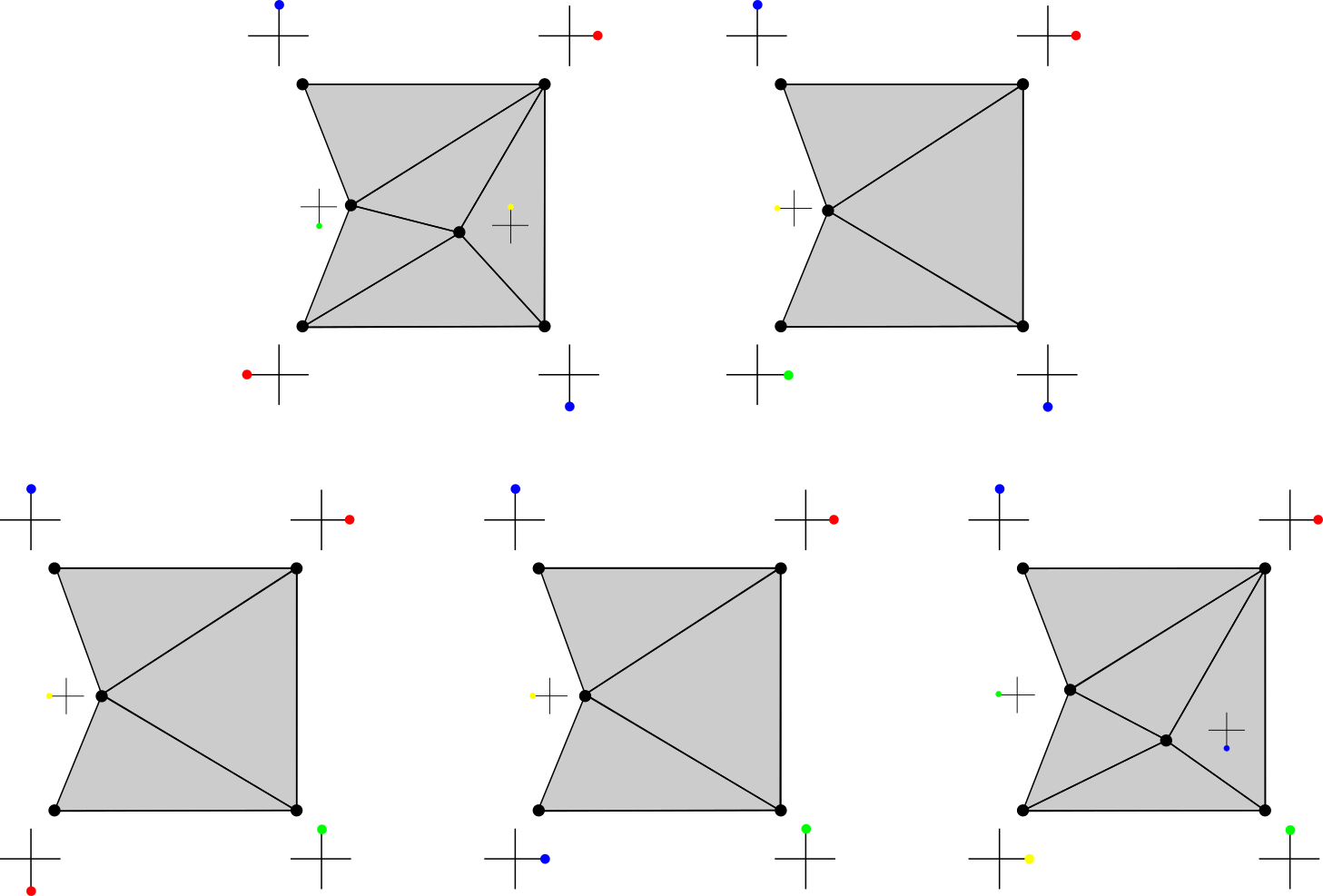}
\caption{The collections of $2$-cells in $N(\mathcal{U}_{n+1,4})$ that allow us to shorten the $5$ paths of Proposition \ref{5 types of paths} when $k=4$ and $n\ge3$.
}
\label{homologouspathnerveGamma4}
\end{figure}

\begin{figure}[h]
\centering
\captionsetup{width=.8\linewidth}
\includegraphics[width = 12cm]{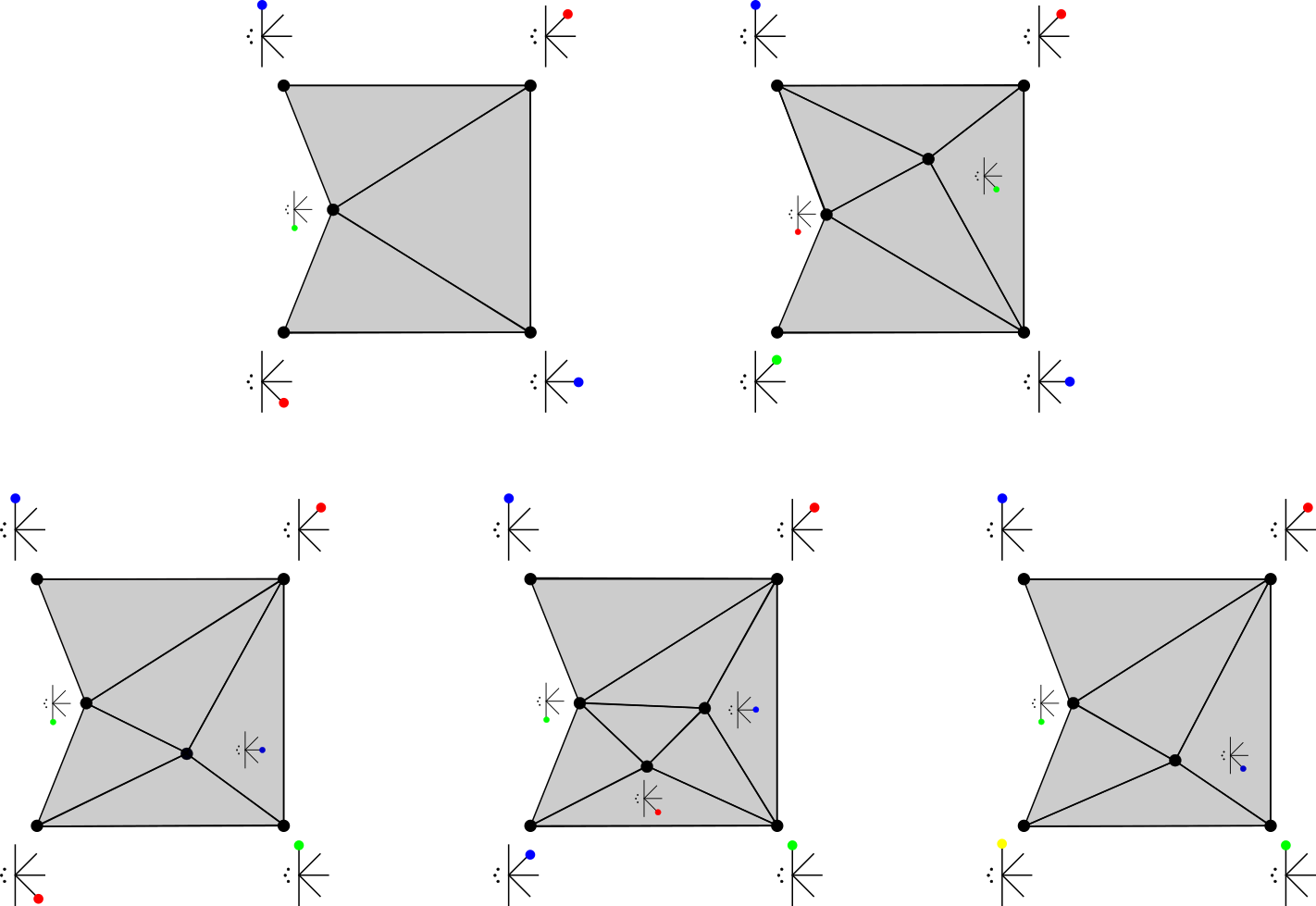}
\caption{The collections of $2$-cells in $N(\mathcal{U}_{n+1, k})$ that allow us to shorten the $5$ paths of Proposition \ref{5 types of paths} when $k\ge 5$ and $n\ge2$.
}
\label{homologouspathnerveGamma5}
\end{figure}

\end{proof}

We use Proposition \ref{shorten paths of length 3} to provide conditions for $N(\mathcal{U}_{n+1, k})$ to have trivial first homology.
This will allow us to use Proposition \ref{E2 of mayer--vietoris} to show that the $E^{2}_{1,0}[\Gamma_{k}](n+1)$-entry of the Mayer--Vietoris spectral sequence is $0$, proving our finite generation results.

\begin{lem}\label{contract 1-cycles}
If $k=3$ and $n\ge 4$, or $k=4$ and $n\ge 3$, or $k\ge 5$ and $n\ge 2$, then $H_{1}\big(N(\mathcal{U}_{n+1, k})\big)=0$.
\end{lem}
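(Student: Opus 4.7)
The plan is to show that every $1$-cycle in $N(\mathcal{U}_{k,n+1})$ is a $1$-boundary, via induction on cycle length. The building blocks are already in place: Proposition~\ref{3 cycles are contractible} kills every length-$3$ cycle (each such triangle bounds a $2$-simplex of the nerve), and Proposition~\ref{shorten paths of length 3} lets us replace any length-$3$ subpath of a longer cycle by a length-$2$ subpath with the same endpoints at the cost of a $2$-chain boundary. Since $N(\mathcal{U}_{k,n+1})^{(1)}$ is a simple graph, the minimum possible cycle length is $3$, which is exactly our base case.

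First I would reduce the problem to simple edge-loops in the $1$-skeleton: standard graph theory says that $Z_1$ of a graph is spanned by the fundamental simple cycles coming from any spanning tree, so it suffices to show that every simple closed edge-loop $\gamma = v_0 v_1 \cdots v_{\ell-1} v_0$ in $N(\mathcal{U}_{k,n+1})^{(1)}$ represents $0$ in $H_1\!\left(N(\mathcal{U}_{k,n+1})\right)$.

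Next I would induct on $\ell$. For $\ell = 3$, Proposition~\ref{3 cycles are contractible} directly supplies a $2$-simplex bounding $\gamma$. For $\ell \geq 4$, focus on the length-$3$ subpath $v_0 v_1 v_2 v_3$. Under the stated hypotheses on $k$ and $n$, Proposition~\ref{shorten paths of length 3} produces a length-$2$ path $v_0 w v_3$ in $N(\mathcal{U}_{k,n+1})^{(1)}$ together with a $2$-chain $c$ satisfying
\[
(v_0 v_1 + v_1 v_2 + v_2 v_3) - (v_0 w + w v_3) = \partial c.
\]
Substituting the length-$2$ piece for the length-$3$ piece in $\gamma$ produces a homologous $1$-cycle whose total edge count is $\ell - 1$. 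Decomposing this shorter $1$-cycle back into simple loops (each of length at most $\ell - 1$) and applying the inductive hypothesis to each summand finishes the argument.

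The one bookkeeping subtlety will be that after substitution the resulting $1$-chain need not be a single simple loop — it may contain repeated or cancelling edges. The clean way to handle this is to measure progress not by the length of a single loop but by the maximum length of a simple loop appearing in any decomposition of the $1$-cycle into simple loops, and to argue that the chain-level shortening above strictly decreases this quantity (possibly at the expense of creating more, but strictly shorter, summands). Beyond this, the argument is purely combinatorial and uses no further geometric input than Propositions~\ref{3 cycles are contractible} and~\ref{shorten paths of length 3}.
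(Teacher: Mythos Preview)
Your proposal is correct and follows essentially the same route as the paper: induction on cycle length, with Proposition~\ref{3 cycles are contractible} handling the base case and Proposition~\ref{shorten paths of length 3} giving the inductive shortening. The only cosmetic difference is that the paper folds the ``reduce to simple loops'' step into the induction (by observing that a self-intersecting cycle splits into strictly shorter ones), whereas you isolate it up front via a spanning-tree argument and then note the same decomposition subtlety after the shortening step.
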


\begin{proof}
We proceed by induction on the length of a $1$-cycle in $N(\mathcal{U}_{n+1, k})$. 
Since $N(\mathcal{U}_{n+1, k})^{(1)}$ is a simple graph there are no $1$-cycles of length $1$ in $N(\mathcal{U}_{n+1, k})$ and all $1$-cycles of length $2$ are trivially null-homotopic.
By Proposition \ref{3 cycles are contractible} all $1$-cycles of length $3$ in $N(\mathcal{U}_{n+1, k})$ are null-homotopic.

Assume all $1$-cycles of length at most $m$ are null-homotopic in $N(\mathcal{U}_{n+1, k})$.
Let $\kappa$ be a $1$-cycle of length $m+1$ in $N(\mathcal{U}_{n+1, k})$.
We may assume that $\kappa$ does not self intersect as if it does, we can decompose $\kappa$ into two strictly shorter $1$-cycles, which are null-homotopic by assumption. 
Let $\gamma$ be a subpath of $\kappa$ of length $3$.
By Proposition \ref{shorten paths of length 3}, $\gamma$ is homotopy equivalent to a path $\gamma'$ of length $2$, implying that $\kappa$ is homotopy equivalent to a $1$-cycle $\kappa'$ of length $m$. 
Since $\kappa'$ is null-homotopic by the induction hypothesis, it follows that $\kappa$ is null-homotopic and $H_{1}\big(N(\mathcal{U}_{n+1, k})\big)=0$.
\end{proof}

Since the $E^{2}_{1,0}[\Gamma_{k}](n+1)$-entry of the Mayer--Vietoris spectral sequence is the first homology of $N(\mathcal{U}_{n+1, k})$ we get the following corollary.

\begin{cor}\label{E210 is 0}
If $k=3$ and $n\ge 4$, or $k=4$ and $n\ge 3$, or $k\ge 5$ and $n\ge 2$, then $E^{2}_{1,0}[\Gamma_{k}](n+1)$-entry of the Mayer--Vietoris spectral sequence is $0$.
\end{cor}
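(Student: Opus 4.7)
The plan is to derive this as an essentially immediate consequence of Proposition \ref{E2 of mayer--vietoris} combined with Lemma \ref{contract 1-cycles}. Concretely, I would first verify that the cover $\mathcal{U}_{k,n+1}$ of $F_{n+1}(\Gamma_{k})$ satisfies the hypothesis of Proposition \ref{E2 of mayer--vietoris}, then use that proposition to identify $E^{2}_{1,0}[\Gamma_{k}](n+1)$ with $H_{1}\big(N(\mathcal{U}_{k,n+1})\big)$, and finally invoke Lemma \ref{contract 1-cycles} to conclude that this group vanishes in each of the stated ranges.

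For the hypothesis check, I would point to the observation made in the paragraph just before Proposition \ref{3 cycles are contractible}: a finite intersection $U'_{i_{0},j_{0}}\cap\cdots\cap U'_{i_{m},j_{m}}$ is nonempty exactly when the edge indices $i_{\ell}$ are pairwise distinct (no two particles can be outermost on the same edge) and the particle labels $j_{\ell}$ are pairwise distinct (a single particle cannot simultaneously be outermost on two edges). In that case, the intersection deformation retracts to the subspace in which particle $j_{\ell}$ sits at leaf $i_{\ell}$ for each $\ell$ while the remaining $n-m$ particles are configured on the complementary subgraph, and this subspace is homotopy equivalent to $F_{n-m}(\Gamma_{k})$ and in particular connected. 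Thus every nonempty intersection arising from $\mathcal{U}_{k,n+1}$ is connected.

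With this input secured, Proposition \ref{E2 of mayer--vietoris} applies to yield the identification
\[
E^{2}_{1,0}[\Gamma_{k}](n+1)\;\cong\;H_{1}\big(N(\mathcal{U}_{k,n+1})\big).
\]
Under each of the stated hypotheses on $k$ and $n$, Lemma \ref{contract 1-cycles} gives $H_{1}\big(N(\mathcal{U}_{k,n+1})\big)=0$, completing the argument.

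There is no genuine obstacle at this stage: all of the combinatorial work was already carried out in Propositions \ref{3 cycles are contractible}, \ref{5 types of paths}, and \ref{shorten paths of length 3}, and assembled in Lemma \ref{contract 1-cycles}. The corollary merely reinterprets that computation in the language of the Mayer--Vietoris spectral sequence so it can be fed into the representation-stability argument of the next section.
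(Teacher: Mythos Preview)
Your proposal is correct and follows essentially the same approach as the paper: the paper states this corollary as an immediate consequence of the identification $E^{2}_{1,0}[\Gamma_{k}](n+1)=H_{1}\big(N(\mathcal{U}_{k,n+1})\big)$ together with Lemma \ref{contract 1-cycles}. Your version is slightly more explicit in checking the connectivity hypothesis of Proposition \ref{E2 of mayer--vietoris}, which the paper establishes in the paragraph preceding Proposition \ref{3 cycles are contractible}.
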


Finally, we use this to complete the proof of Theorem \ref{rep stability for star}.

\begin{proof}
First we prove that $H_{1}\big(F_{\bullet}(\Gamma_{k})\big)$ can be generated as an FI$_{k, o}$-module in the claimed degrees.
Then we confirm that these degrees are optimal.

It suffices to determine when all the classes in $H_{1}\big(F_{n+1}(\Gamma_{k})\big)$ can be written as sums of classes in $H_{1}\big(F_{n+1}(\Gamma_{k})\big)$ that arise from the $k$ ways of tensoring classes in $H_{1}\big(F_{n}(\Gamma_{k})\big)$ with the fundamental class of $H_{0}\big(F_{1}(\R)\big)$.
By the choice of our cover $\mathcal{U}_{n+1, k}$ of $F_{n+1}(\Gamma_{k})$, this is the same as showing that $H_{1}\big(F_{n+1}(\Gamma_{k})\big)$ is generated by $E^{1}_{0,1}[\Gamma_{k}](n+1)$.
Since the Mayer--Vietoris spectral sequence is trivial for $p, q<0$, this is equivalent to determining when $E^{2}_{1,0}[\Gamma_{k}](n+1)=0$, as this is the only other entry of the spectral sequence where new classes in first homology arise.

Let $E^{2}_{1, 0}[\Gamma_{k}]$ denote the FB-module, whose degree $(n+1)^{\text{th}}$-term is $E^{2}_{1,0}[\Gamma_{k}](n+1)$.
By Corollary \ref{E210 is 0}, we have that $E^{2}_{1,0}[\Gamma_{k}](n+1)=0$ for $n\ge 4$ and $k=3$, for $n\ge 3$ and $k=4$, and for $n\ge 2$ and $k\ge 5$.
It follows that as an FI$_{k,o}$-module, $H_{1}\big(F_{\bullet}(\Gamma_{k})\big)$ is finitely generated by $E^{2}_{1, 0}[\Gamma_{k}]$ in degree at most $4$ for $k=3$, degree at most $3$ for $k=4$, and degree at most $2$ for $k\ge 5$, i.e., there are surjections
\[
\bigoplus_{m\le N_{k}}M^{\text{FI}_{k, o}}\big(E^{2}_{1,0}[\Gamma_{k}](m)\big)\twoheadrightarrow H_{1}\big(F_{\bullet}(\Gamma_{k})\big),
\]
where if $k\ge 5$, we have $N_{k}=2$, if $k=4$, we have $N_{k}=3$, and if $k=3$, we have $N_{k}=4$, as in these cases we have $\bigoplus_{m\le N_{k}}M^{\text{FI}_{k, o}}\big(E^{2}_{1,0}[\Gamma_{k}](m)\big)=M^{\text{FI}_{k, o}}\big(E^{2}_{1,0}[\Gamma_{k}]\big)$.

Next, we show that $E^{2}_{1,0}[\Gamma_{k}](N_{k})\neq 0$.
To do this, we use the formula for the rank of $H_{1}\big(F_{n}(\Gamma_{k})\big)$ following from Proposition \ref{pi1 of conf}.
If $k\ge 5$, note that $E^{1}_{0,1}[\Gamma_{k}](2)=H_{1}\big(F_{1}(\Gamma_{k})\big)^{\oplus 2k}\cong0$, whereas $H_{1}\big(F_{2}(\Gamma_{k})\big)$ is non-trivial, so $E^{2}_{1,0}[\Gamma_{k}](2)\neq 0$.
If $k=4$, one may check that $E^{1}_{0,1}[\Gamma_{4}](3)=H_{1}\big(F_{2}(\Gamma_{4})\big)^{\oplus 12}\cong\Z^{60}$, but $H_{1}\big(F_{3}(\Gamma_{4})\big)\cong\Z^{61}$, so $E^{2}_{1,0}[\Gamma_{4}](3)\not\cong 0$.
Finally, if $k=3$, we have that $N(\mathcal{U}_{4,3})$ is a $2$-dimensional simplicial complex with $12$ $0$-cells, $36$ $1$-cells, and $24$ $2$-cells.
It follows from Betti number considerations and that the fact that every $1$-cell of $N(\mathcal{U}_{4,3})$ is contained in precisely two $2$-cells that $N(\mathcal{U}_{4,3})$ is homotopy equivalent to a $2$-torus, so $E^{2}_{1,0}[\Gamma_{3}](4)=H_{1}\big(N(\mathcal{U}_{4,3})\big)\cong \Z^{2}$. 
Thus, we have found a generating set for $H_{1}\big(F_{\bullet}(\Gamma)_{k}\big)$ of the desired degree; next, we show that no smaller generating set exists.

Let $W$ be some other FB-module generating $H_{1}\big(F_{\bullet}(\Gamma_{k})\big)$ such that $W_{m}=0$ for $m\ge N_{k}$.
It follows that the image of $\bigoplus_{m<N_{k}}M^{\text{FI}_{k,o}}\big(E^{2}_{1,0}[\Gamma_{k}](m)\big)$ and $M^{\text{FI}_{k,o}}(W)$ in $H_{1}\big(F_{N_{k}-1}(\Gamma_{k})\big)$ are equal, being all of $H_{1}\big(F_{N_{k}-1}(\Gamma_{k})\big)$.
From the FI$_{k,o}$-module structure of $H_{1}\big(F_{\bullet}(\Gamma_{k})\big)$, it follows that the image of $M^{\text{FI}_{k,o}}(W)_{N_{k}}$ in $H_{1}\big(F_{N_{k}}(\Gamma_{k})\big)$ is precisely all the classes in $H_{1}\big(F_{N_{k}}(\Gamma_{k})\big)$ that arise from linear combinations of classes in $H_{1}\big(F_{N_{k}}(\Gamma_{k})\big)$ that arise by adding a point at the end of an edge of $\Gamma_{k}$ to a class in $H_{1}\big(F_{N_{k}-1}(\Gamma_{k})\big)$, i.e., the classes of the $E^{1}_{0,1}[k](N_{k})$-entry of the Mayer--Vietoris spectral sequence.
Since $E^{2}_{1,0}[\Gamma_{k}](N_{k})\neq0$, it follows that these classes do not generate all of $H_{1}\big(F_{N_{k}}(\Gamma_{k})\big)$, so $W$ is not a generating set, a contradiction.
Therefore, $E^{2}_{1,0}[\Gamma_{k}]$ is a generating set of minimal degree, completing the proof.
\end{proof}

We have shown that $H_{i}\big(F_{\bullet}(\Gamma_{k})\big)$ has the structure of a finitely generated FI$_{k,o}$-module and determined its generation degree as such, proving that it satisfies a notion of representation stability.
The case of general graphs is significantly more complicated. 
When $\Gamma$ has edges not connected to leaves, the operation of adding particles at the leaves is in general insufficient to generate all classes in homology.
Moreover, a direct analogue of the edge stabilization maps of \cite{an2020edge} is not available due to the impossibility of such a map existing at the level of configuration space, though L\"{u}tgehetmann--Recio-Mitter introduce a different potential stabilization map in \cite[Section 5]{lutgehetmann2021topological}.

In the next section we consider the presentation degree of $H_{i}\big(F_{\bullet}(\Gamma_{k})\big)$ as an FI$_{k,o}$-module, by studying the $E^{2}_{2,0}[\Gamma_{k}](n+1)$-entry of our Mayer--Vietoris spectral sequence.

\section{Finite Presentability}\label{presentability}
Having determined the generation degree of $H_{1}\big(F_{\bullet}(\Gamma_{k})\big)$ as an FI$_{k,o}$-module, we turn to bounding its presentation degree.
Doing so gives an upper bound on the number of particles needed to determine when new relations no longer occur in $H_{1}\big(F_{\bullet}(\Gamma_{k})\big)$.

To approach this problem we again rely on the Mayer--Vietoris spectral sequence to determine when all relations in $H_{1}\big(F_{n+1}(\Gamma_{k})\big)$ arise from relations in $H_{1}\big(F_{n}(\Gamma_{k})\big)$.
We will show that it suffices to prove that the $E^{2}_{2,0}[\Gamma_{k}](n+1)$-entry of Mayer--Vietoris spectral sequence for $F_{n+1}(\Gamma_{k})$ eventually vanishes for all $k\ge 4$, something that is not true for $k=3$.
In fact, for all $n$ at least $3$ there are new relations $H_{1}\big(F_{n+1}(\Gamma_{3})\big)$, proving that the problem of finding a finite universal presentation for the homology of the ordered configuration space of graphs is impossible. 

Recall that $E^{2}_{0,2}[\Gamma_{k}](n+1)=H_{2}\big(N(\mathcal{U}_{n+1, k})\big)$. 
Calculating this homology group directly is a challenge; however, we will find an covering of the $3$-skeleton of $N(\mathcal{U}_{n+1, k})$ that will allow us to use the Mayer--Vietoris spectral sequence to determine when $H_{2}\big(N(\mathcal{U}_{n+1, k})\big)$ vanishes.

Consider the $0$-simplices in $N(\mathcal{U}_{n+1, k})$ that arise by fixing an edge $j$ of $\Gamma_{k}$ and letting the label of the particle fixed on this edge vary through all $n+1$ possible labels.
For each of the $n+1$ such $0$-simplices $U_{i,j}$ consider its closed star $S_{i}$. 
As long as $k\ge 4$ and $n\ge 4$ every $3$-dimensional subsimplex of $N(\mathcal{U}_{n+1, k})$ is contained in the star of one of these $0$-simplices, so taking a $\epsilon$-neighborhood of these stars for $\epsilon>0$ sufficiently small will give an open cover $\mathcal{S}_{n+1, k}$ of a subcomplex of $N(\mathcal{U}_{n+1, k})$ containing $N(\mathcal{U}_{n+1, k})^{(3)}$ that we can use to analyze the second homology of $N(\mathcal{U}_{n+1, k})$.

\begin{prop}\label{stars contain everything}
For $k\ge 4$ and $n\ge 4$, let $\tau\subset N(\mathcal{U}_{n+1, k})$ be a subsimplex of dimension at most $3$.
Then $\tau\subset S_{i}$ for some $1\le i\le n+1$.
\end{prop}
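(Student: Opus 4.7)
The plan is to give a short combinatorial case analysis, using the characterization of simplices of $N(\mathcal{U}_{k,n+1})$ already established in the discussion preceding Proposition \ref{3 cycles are contractible}: a simplex $\tau$ is given by a collection of pairs $(i_0,j_0),\ldots,(i_p,j_p)$ with the edges $i_l\in[k]$ pairwise distinct and the labels $j_l\in[n+1]$ pairwise distinct, corresponding to the nonempty intersection $U'_{i_0,j_0}\cap\cdots\cap U'_{i_p,j_p}$. Let $e\in[k]$ denote the fixed edge used to define the $0$-simplices $\sigma_i=U'_{e,i}$ for $i\in[n+1]$. Since $S_i$ is the closed star of $\sigma_i$, showing $\tau\in S_i$ amounts to showing that $\tau\cup\{\sigma_i\}$ spans a simplex of $N(\mathcal{U}_{k,n+1})$, i.e.\ that $U'_{e,i}\cap U'_{i_0,j_0}\cap\cdots\cap U'_{i_p,j_p}$ is nonempty.

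I would split into two cases depending on whether the fixed edge $e$ appears among the edges of $\tau$. In the first case, $e=i_l$ for some $l$; then $\sigma_{j_l}=U'_{e,j_l}=U'_{i_l,j_l}$ is already a vertex of $\tau$, and $\tau\in S_{j_l}$ with nothing further to check. In the second case, $e\notin\{i_0,\ldots,i_p\}$; the $p+1$ edges of $\tau$ are pairwise distinct and all differ from $e$, so $p+1\le k-1$, and in particular at most $k-1$ of the $n+1$ labels are used in $\tau$. In the regime $n+1\ge k$ one can therefore choose a label $i\in[n+1]\setminus\{j_0,\ldots,j_p\}$. Placing particle $i$ as the outermost particle on edge $e$ is then compatible with the constraints defining $\tau$ (the new edge $e$ is unused, and the new label $i$ is unused), so $U'_{e,i}\cap U'_{i_0,j_0}\cap\cdots\cap U'_{i_p,j_p}$ is nonempty and $\tau\in S_i$.

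The main, and rather mild, obstacle is the bookkeeping producing the free label $i$ in the second case: it is a one-line pigeonhole argument but relies on identifying the relevant regime $n+1\ge k$, which is exactly the regime in which the cover $\mathcal{S}_{k,n+1}$ of $N(\mathcal{U}_{k,n+1})$ will be used in the subsequent Mayer--Vietoris computation of $H_2\bigl(N(\mathcal{U}_{k,n+1})\bigr)=E^2_{2,0}[\Gamma_k](n+1)$.
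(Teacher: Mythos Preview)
Your case split is exactly the paper's: either the fixed edge $e$ appears among the edges of $\tau$ (and then some $\sigma_{j_l}$ is already a vertex of $\tau$, so $\tau\in S_{j_l}$), or it does not (and one adjoins $\sigma_i$ for a free label $i$). You are in fact more careful than the paper in the second case: the paper asserts $\tau\in S_i$ for \emph{all} $i$, which is too strong---if label $i$ already occurs in $\tau$ on some edge $\ne e$, then $\tau*\sigma_i$ is not a simplex. Your pigeonhole step producing an unused label is the correct fix.

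One caveat about your closing remark: the hypothesis $n+1\ge k$ you isolate is \emph{not} the full regime in which the cover $\mathcal{S}_{k,n+1}$ is later applied. Lemma~\ref{second homology of nerve} invokes it for $k\ge 6$ and $n\ge 3$, which includes cases with $n+1<k$; there a top-dimensional simplex of $N(\mathcal{U}_{k,n+1})$ using all $n+1$ labels on $n+1$ edges other than $e$ genuinely lies in no $S_i$. So the proposition as stated needs the extra hypothesis you supply---this is a wrinkle in the paper's formulation rather than a defect in your argument.
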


\begin{proof}
First, note that every $0$-, $1$-, and $2$-dimensional subsimplex of $N(\mathcal{U}_{n+1, k})$ is contained in some $3$-dimension subsimplex of $N(\mathcal{U}_{n+1, k})$.
This follows from the fact that there are at least $4$ edges in $\Gamma_{k}$ and there are at least $4$ distinct particles in $F_{n+1}(\Gamma_{k})$.
As such, it suffices to show that every $3$-dimensional subsimplex of $N(\mathcal{U}_{n+1, k})$ is in some $S_{i}$.
Let $\tau$ be such a subsimplex, and let $U_{i_{0}, j_{0}}$, $U_{i_{1}, j_{1}}$, $U_{i_{2}, j_{2}}$, and $U_{i_{3}, j_{3}}$ be its vertices---note that $j_{0}, j_{1}, j_{2}$, and $j_{3}$ are distinct and so are $i_{0}, i_{1}, i_{2}$, and $i_{3}$.
If one of these $j_{l}$ is equal to the fixed edge $j$, then $\tau$ is in $S_{i_{l}}$.
Otherwise none of $j_{0}, j_{1}, j_{2}$, and $j_{3}$ are equal to $j$.
In this case, $k\ge 5$ and there is some $U_{i_{4}, j_{4}}$ in $N(\mathcal{U}_{n+1, k})$ such that $j_{4}=j$ and $i_{4}\in \{1, \dots, n+1\}$ is not one of $i_{0}, i_{1}, i_{2}$, or $i_{3}$.
Therefore, $\tau$ is in $S_{i_{4}}$, proving that every $3$-dimensional subsimplex of $N(\mathcal{U}_{n+1, k})$ is in some $S_{i}$ for some $1\le i\le n+1$.
\end{proof}

The above argument can be adapted to show that if $n+1\ge k$, then every subsimplex $\tau\subset N(\mathcal{U}_{n+1, k})$ is in some $S_{i}$, though we do not need the full strength of this statement.

Giving a simplex the standard Euclidean metric induces a metric on the simplicial complex $N(\mathcal{U}_{n+1, k})$, and if we take the $\epsilon$-neighborhood of $S_{i}$ for a sufficiently small $\epsilon>0$, the resulting open neighborhoods $S'_{i}$ have the same intersection pattern as the $S_{i}$.
While $\mathcal{S}_{n+1, k}:=\{S_{i}\}_{i=1}^{n+1}$ might not cover $N(\mathcal{U}_{n+1, k})$, it does cover the latter's $3$-skeleton if $k\ge 4$ and $n\ge 4$.
Since the $3$-skeleton of any CW complex has the same zeroth, first, and second homologies as the entire complex we can use $\mathcal{S}_{n+1, k}$ and the corresponding Mayer--Vietoris spectral sequence to calculate $H_{2}\big(N(\mathcal{U}_{n+1, k})\big)$.
To do this we begin by noting the following fact about the intersection of the $S_{i}$, and hence the $S'_{i}$.

\begin{prop}\label{intersection of stars}
For $l\ge 1$ and $i_{0}, \dots, i_{l}$ distinct, the subspace $S_{i_{0}}\cap\cdots\cap S_{i_{l}}$ of $N(\mathcal{U}_{n+1, k})$ is homotopy equivalent to $N(\mathcal{U}_{n-l, k-1})$.
\end{prop}

\begin{proof}
A $0$-simplex of $N(\mathcal{U}_{n+1, k})$ is in $S_{i_{0}}\cap\cdots\cap S_{i_{l}}$ if and only if it corresponds to a particle with label not in $\{i_{0}, \dots, i_{l}\}$ and not on the fixed edge $j$. 
Since the $S_{i}$ are closed stars of a $0$-simplex of $N(\mathcal{U}_{n+1, k})$, the intersection $S_{i_{0}}\cap\cdots\cap S_{i_{l}}$ is a full subcomplex of $N(\mathcal{U}_{n+1, k})$.
As $N(\mathcal{U}_{n+1, k})$ is flag, it follows that $S_{i_{0}}\cap\cdots\cap S_{i_{l}}$ is homotopy equivalent to $N(\mathcal{U}_{n-l, k-1})$ by the map that sends the $k-1$ other edges of $\Gamma_{k}$ to the edges of $\Gamma_{k-1}$ and the $n-l$ particles not in $\{i_{0}, \dots, i_{l}\}$ to the $n-l$ particles in $F_{n-l}(\Gamma_{k-1})$.
\end{proof}

Given that we wish to show that $H_{2}\big(N(\mathcal{U}_{n+1, k})\big)=0$ and only the $E^{\infty}_{0,2}$-, $E^{\infty}_{1,1}$-, and $E^{\infty}_{p,0}$-entries of the Mayer--Vietoris spectral sequence arising from the cover $\mathcal{S}_{n+1,k}$ of $N(\mathcal{U}_{n+1, k})^{(3)}$ can contribute to this, we wish to show that the corresponding $E^{2}$-page entries are $0$.
Note that the $S_{i}$ are connected, being stars of a $0$-simplex, and that $N(\mathcal{U}_{n-l, k-1})$ is connected for $k\ge 4$ and $n-l\ge 2$.
It follows from Proposition \ref{E2 of mayer--vietoris} that the $E^{2}_{p,0}$-entry of this Mayer--Vietoris spectral sequence is the $p^{\text{th}}$-homology of $N(\mathcal{S}_{n+1, k})$.
With this in mind, we prove the following:

\begin{prop}\label{nerve is a ball}
For $k\ge 4$ and $n\ge 4$, the first and second homology groups of $N(\mathcal{S}_{n+1, k})$ are $0$ .
\end{prop}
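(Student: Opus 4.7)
The plan is to show that the nerve $N(\mathcal{S}_{k,n+1})$ is homeomorphic to the boundary of the $n$-simplex, i.e., to a sphere $S^{n-1}$, and then read off the first and second homology for $n\ge 4$, where the sphere has dimension at least $3$.

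First, I would identify precisely which subsets $T\subseteq [n+1]$ index simplices of $N(\mathcal{S}_{k,n+1})$. By definition $T$ is a simplex iff $\bigcap_{i\in T}S_i'\neq \emptyset$. Every singleton is a simplex since each $S_i'$ is non-empty. For $2\le |T|=j\le n$, Proposition \ref{intersection of stars} gives $\bigcap_{i\in T}S_i'\simeq N(\mathcal{U}_{k-1,n+1-j})$, and the latter is a non-empty simplicial complex because it has $(k-1)(n+1-j)\ge 1$ vertices. Thus every subset of $[n+1]$ of size at most $n$ is a simplex.

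Next, I would argue by a short direct calculation that $\bigcap_{i=1}^{n+1}S_i'=\emptyset$. Looking at the proof of Proposition \ref{intersection of stars}, a $0$-simplex of $N(\mathcal{U}_{k,n+1})$ lies in $\bigcap_{i\in T}S_i$ precisely when it corresponds to a particle whose label avoids $T$ and whose edge is not the fixed edge; for $T=[n+1]$ no such label exists, and there are no higher simplices either, so the intersection is empty. Consequently $N(\mathcal{S}_{k,n+1})$ consists of all proper non-empty subsets of $[n+1]$, which is exactly $\partial\Delta^{n}\cong S^{n-1}$.

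Finally, for $n\ge 4$ we have $n-1\ge 3$, so $H_1(S^{n-1})=H_2(S^{n-1})=0$, which gives the conclusion. I do not anticipate a major obstacle in this argument; the only delicate step is that Proposition \ref{intersection of stars} handles the $|T|\le n$ cases uniformly but not the top case $|T|=n+1$, so that one requires the brief direct combinatorial check described above. Everything else is bookkeeping in the nerve.
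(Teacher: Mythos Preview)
Your proof is correct and takes a cleaner route than the paper's. The paper only identifies the $3$-skeleton of $N(\mathcal{S}_{k,n+1})$ as that of the $n$-simplex (for $n\ge 5$) and then invokes the Hurewicz theorem; it treats $n=4$ separately, describing the $3$-skeleton as the $2$-skeleton of $\Delta^4$ with $k-1$ copies of each $3$-simplex glued along their boundaries. You instead compute the \emph{entire} nerve as $\partial\Delta^n\cong S^{n-1}$, which handles all $n\ge 4$ uniformly and pins down the full homotopy type rather than just $H_1$ and $H_2$. The only ingredient you add beyond the paper is the top-dimensional check that $\bigcap_{i=1}^{n+1}S_i'=\emptyset$; this indeed follows immediately from the vertex description in the proof of Proposition~\ref{intersection of stars} (or simply from that proposition applied at $j=n+1$, giving $N(\mathcal{U}_{k-1,0})=\emptyset$).

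One remark: the paper's $n=4$ description with ``$k-1$ copies of each $3$-simplex'' is really tracking the $E^1_{*,0}$-row of the Mayer--Vietoris spectral sequence (where the disconnected quadruple intersections $S_{i_1}\cap\cdots\cap S_{i_4}\simeq N(\mathcal{U}_{k-1,1})$ contribute $k-1$ summands to $H_0$) rather than the abstract nerve complex; under the standard definition of the nerve your identification $N(\mathcal{S}_{k,5})=\partial\Delta^4$ is the correct one. Your argument sidesteps this ambiguity entirely.
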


\begin{proof}
There is $0$-simplex in $N(\mathcal{S}_{n+1, k})$ for each set $S_{i}$, i.e., there are $n+1$ such $0$-simplices.
When $n\ge 5$, any collection of $2\le l\le 4$ of these $S_{i}$ has non-trivial intersection as it is homotopy equivalent to $N(\mathcal{U}_{k-1, n+1-l})$, which is non-empty and connected as $k\ge 4$.
Thus, any $l\le 4$ of the $S_{i}$ define an $(l-1)$-simplex whose boundary is a signed sum of the $(l-2)$-simplices that arise by forgetting one of the $S_{i}$.
It follows that the $3$-skeleton of $N(\mathcal{S}_{n+1, k})$ is the $3$-skeleton of the $n$-simplex.
For $n\ge3,$ the $3$-skeleton of the $n$-simplex is $2$-connected, so the Hurewicz Theorem implies $H_{1}\big(N(\mathcal{S}_{n+1, k})\big)=H_{2}\big(N(\mathcal{S}_{n+1, k})\big)=0$.

When $n=4$, the we have a similar result, though in this case the $3$-skeleton of $N(\mathcal{S}_{n+1, k})$ is not the $3$-skeleton of the $4$-simplex. 
Instead, it is the $2$-skeleton of the $4$-simplex with with $k-1$ copies of each $3$-simplex of the $3$-skeleton identified at their faces.
This space is still $2$-connected, so $H_{1}\big(N(\mathcal{S}_{n+1, k})\big)=H_{2}\big(N(\mathcal{S}_{n+1, k})\big)=0$.
\end{proof}

We use these results to determine the second homology of $N(\mathcal{U}_{n+1, k})$ for $n$ sufficiently large with respect to $k$.
The vanishing of this homology group corresponds to the non-existence of new relations in $H_{1}\big(F_{n+1}(\Gamma_{k})\big)$.

\begin{lem}\label{second homology of nerve}
For $k\ge 7$ and $n\ge 3$, $k=6$ and $n\ge 4$, $k=5$ and $n\ge 5$, and $k=4$ and $n\ge 6$, the second homology of the nerve of the cover $\mathcal{U}_{n+1, k}$ of $F_{n+1}(\Gamma_{k})$ is $0$, i.e., $H_{2}\big(N(\mathcal{U}_{n+1, k})\big)=0$.
Moreover, if $k\ge 7$, then $H_{2}\big(N(\mathcal{U}_{k, 3})\big)\neq 0$.
\end{lem}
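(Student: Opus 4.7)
\emph{Plan.} I apply the Mayer--Vietoris spectral sequence for the cover $\mathcal{S}_{k, n+1}$ of $N(\mathcal{U}_{k, n+1})$ constructed just above the lemma. Single stars are contractible, so $E^{1}_{0, q} = 0$ for $q \ge 1$, and by Proposition \ref{intersection of stars} every $j$-fold intersection with $j \ge 2$ is homotopy equivalent to the smaller nerve $N(\mathcal{U}_{k-1,\, n+1-j})$, so the whole $E^{1}$ page is expressed in terms of the homology of strictly smaller configuration nerves. To prove $H_{2}\big(N(\mathcal{U}_{k, n+1})\big) = 0$ it suffices to show $E^{\infty}_{2, 0} = E^{\infty}_{1, 1} = 0$, since $E^{\infty}_{0, 2}$ vanishes automatically.

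\emph{Main vanishing.} In the deep range where all multifold intersections are connected, the complex $E^{1}_{*, 0}$ is the augmented simplicial chain complex of $N(\mathcal{S}_{k, n+1})$, so $E^{2}_{2, 0} = H_{2}\big(N(\mathcal{S}_{k, n+1})\big)$, which is $0$ by Proposition \ref{nerve is a ball} whenever $n \ge 4$. Meanwhile $E^{1}_{1, 1}$ is a direct sum of copies of $H_{1}\big(N(\mathcal{U}_{k-1, n-1})\big)$, which vanishes by Corollary \ref{E210 is 0} once $(k-1, n-2)$ lies in the range of that corollary. Together these two inputs handle $k \ge 6$ with $n \ge 4$, $k = 5$ with $n \ge 5$, and $k = 4$ with $n \ge 6$ in one stroke.

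\emph{Hard cases.} The technical heart of the proof is the two remaining cases $(k, n) = (4, 5)$ and $k \ge 6$ with $n = 3$; these require hands-on analysis of the $d_{1}$ and $d_{2}$ differentials. For $(4, 5)$ the pair-intersection homology $H_{1}\big(N(\mathcal{U}_{3, 4})\big) \cong \Z^{2}$ is nonzero by the footnote in the proof of Theorem \ref{rep stability for star}, so I cannot kill $E^{1}_{1, 1}$ term-by-term and must instead verify directly that $d_{1}\colon E^{1}_{2, 1} \to E^{1}_{1, 1}$ is surjective. For $k \ge 6$ with $n = 3$ the four-fold intersection of stars is empty and each triple intersection $S_{i_{1}} \cap S_{i_{2}} \cap S_{i_{3}} \simeq N(\mathcal{U}_{k-1, 1})$ consists of $k-1$ disjoint points, so Proposition \ref{nerve is a ball} no longer applies; here I have to compute $E^{2}_{2, 0}$ by unwinding $d_{1}$ explicitly, keeping track of which of the $k-1$ components of a triple intersection sits inside the unique component of each containing pair intersection, and then check that whatever survives is killed by $d_{2}$.

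\emph{Non-vanishing.} For the sharpness statement I exploit the involution $(i, j) \mapsto (j, i)$ which identifies $N(\mathcal{U}_{k, 3})$ with $N(\mathcal{U}_{3, k})$ (cf.\ the footnote in the proof of Theorem \ref{rep stability for star}), reducing the problem to an Euler characteristic computation. The complex $N(\mathcal{U}_{3, k})$ is $2$-dimensional with $3k$ vertices, $3k(k-1)$ edges, and $k(k-1)(k-2)$ triangles (the last parametrized by injections $[3] \hookrightarrow [k]$), so $\chi = k(k-2)(k-4)$. Since the complex is connected, $\mathrm{rk}\, H_{2} - \mathrm{rk}\, H_{1} = \chi - 1$, which is at least $47$ when $k \ge 6$, forcing $H_{2}\big(N(\mathcal{U}_{k, 3})\big) \neq 0$.
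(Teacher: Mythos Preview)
Your overall strategy---run the Mayer--Vietoris spectral sequence for the star cover $\mathcal{S}_{k,n+1}$, kill $E^{2}_{2,0}$ via Proposition~\ref{nerve is a ball}, and kill $E^{1}_{1,1}$ via Lemma~\ref{contract 1-cycles}---is exactly the paper's. Your non-vanishing argument via the particle/edge duality $N(\mathcal{U}_{k,3})\simeq N(\mathcal{U}_{3,k})$ together with an Euler characteristic count also matches the paper's.

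The genuine gap is in your ``Hard cases'' paragraph. You correctly isolate $(k,n)=(4,5)$ and $k\ge 6$, $n=3$ as not covered by the main spectral-sequence argument, but you only \emph{announce} what you would compute (surjectivity of $d_{1}$ onto $E^{1}_{1,1}$, explicit component-tracking in triple intersections, a $d_{2}$ check) without carrying out a single step. As written this is a plan, not a proof.

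For $(k,n)=(4,5)$ the paper does something much simpler than your proposed direct attack on $d_{1}\colon E^{1}_{2,1}\to E^{1}_{1,1}$: it invokes the \emph{same} duality $N(\mathcal{U}_{k,n+1})\simeq N(\mathcal{U}_{n+1,k})$ that you already use in your non-vanishing paragraph, obtaining $H_{2}\big(N(\mathcal{U}_{4,6})\big)\cong H_{2}\big(N(\mathcal{U}_{6,4})\big)$ and deferring to the $k\ge 6$ range. You should use that trick here rather than unwinding differentials.

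For $k\ge 6$ with $n=3$ you are right that both inputs fail: Proposition~\ref{nerve is a ball} requires $n\ge 4$, and $H_{1}\big(N(\mathcal{U}_{k-1,2})\big)$ is the first homology of a $1$-dimensional complex with nonzero first Betti number, so $E^{1}_{1,1}\neq 0$. The paper's written proof simply asserts $E^{1}_{1,1}=0$ for $k\ge 6$ and $n\ge 3$ and does not supply the extra analysis you describe, so neither your proposal nor the paper's proof as written fully closes this boundary case. If you want a complete argument you will actually have to execute the direct computation you outlined (or find another reduction); merely flagging it is not enough.
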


\begin{proof}
The $E^{1}_{p,q}$-entry of the Mayer--Vietoris spectral sequence with respect to the covering $\mathcal{S}_{n+1,k}$ of $N(\mathcal{U}_{n+1, k})$ is given by $\bigoplus_{i_{0}, \dots, i_{p}}H_{q}(S'_{i_{0}}\cap \cdots\cap S'_{i_{p}})$.
By Proposition \ref{intersection of stars}, the intersection $S'_{i_{0}}\cap \cdots\cap S'_{i_{p}}\simeq S_{i_{0}}\cap \cdots\cap S_{i_{p}}$ is homotopy equivalent to $N(\mathcal{U}_{k-1, n-p})$ for $p\ge 1$ and $k\ge 4$.
Moreover, $H_{q}(S_{i_{0}})=0$ for all $q\ge 1$, as $S_{i_{0}}$ is contractible onto the $0$-simplex where particle $i_{0}$ sits on the chosen fixed edge. 
See Figure \ref{MayerVietoris E1 of nerve}.

\begin{figure}[h]
\centering
\begin{tikzpicture} \footnotesize
  \matrix (m) [matrix of math nodes, nodes in empty cells, nodes={minimum width=3ex, minimum height=5ex, outer sep=2ex}, column sep=3ex, row sep=3ex]{
 3    &  0&\bigoplus_{\{i_{0}, i_{1}\}} H_{3}\big(N(\mathcal{U}_{n-1, k-1})\big) &  \bigoplus_{\{i_{0}, i_{1}, i_{2}\}} H_{3}\big(N(\mathcal{U}_{n-2, k-1})\big)  & \bigoplus_{\{i_{0}, \dots, i_{3}\}} H_{3}\big(N(\mathcal{U}_{n-3, k-1})\big)& \\  
 2    &  0 &\bigoplus_{\{i_{0}, i_{1}\}} H_{2}\big(N(\mathcal{U}_{n-1, k-1})\big) &  \bigoplus_{\{i_{0}, i_{1}, i_{2}\}} H_{2}\big(N(\mathcal{U}_{n-2, k-1})\big)  & \bigoplus_{\{i_{0}, \dots, i_{3}\}} H_{2}\big(N(\mathcal{U}_{n-3, k-1})\big)&  \\          
 1    &  0  &\bigoplus_{\{i_{0}, i_{1}\}} H_{1}\big(N(\mathcal{U}_{n-1, k-1})\big) &  \bigoplus_{\{i_{0}, i_{1}, i_{2}\}} H_{1}\big(N(\mathcal{U}_{n-2, k-1})\big)  & \bigoplus_{\{i_{0}, \dots, i_{3}\}} H_{1}\big(N(\mathcal{U}_{n-3, k-1})\big)& \\             
  0    &  \bigoplus_{\{i_{0}\}} H_{0}(S_{i_{0}})  &\bigoplus_{\{i_{0}, i_{1}\}} H_{0}\big(N(\mathcal{U}_{n-1, k-1})\big) &  \bigoplus_{\{i_{0}, i_{1}, i_{2}\}} H_{0}\big(N(\mathcal{U}_{n-2, k-1})\big)  & \bigoplus_{\{i_{0}, \dots, i_{3}\}} H_{0}\big(N(\mathcal{U}_{n-3, k-1})\big)& \\       
 \quad\strut &  0  &  1  & 2  &3&\\}; 

\draw[thick] (m-1-1.east) -- (m-5-1.east) ;
\draw[thick] (m-5-1.north) -- (m-5-5.north east) ;
\end{tikzpicture}
\caption{The $E^{1}$-page of the Mayer--Vietoris spectral sequence for the cover $\mathcal{S}_{n+1,k}$ of $N(\mathcal{U}_{n+1, k})$.}
\label{MayerVietoris E1 of nerve}
\end{figure}

For $k\ge 4$, Proposition \ref{nerve is a ball} proves that the $E^{2}_{p,0}$-entry of this spectral sequence is $0$ for $p=1,2$.
Additionally, Lemma \ref{contract 1-cycles} proves that if $k=3$ and $n\ge 4$, or $k=4$ and $n\ge 3$, or $k\ge 5$ and $n\ge 2$ that $H_{1}\big(N(\mathcal{U}_{n+1, k})\big)=0$.
Therefore, if $k=4$ and $n\ge 6$, or $k=5$ and $n\ge 5$, or $k\ge 6$ and $n\ge 4$ the $E^{1}_{1,1}$- and hence $E^{2}_{1,1}$-entries of this spectral sequence are $0$.
It follows that in these situations, the $E^{2}$-page of the Mayer--Vietoris spectral sequence is of the form in Figure \ref{MayerVietoris E2 of nerve}.
This implies $E^{2}_{0,2}=E^{\infty}_{0,2}=0$, $E^{2}_{1,1}=E^{\infty}_{1,1}=0$, and $E^{2}_{2,0}=E^{\infty}_{2,0}=0$.
Since this spectral sequence converges to the homology of a subcomplex of $N(\mathcal{U}_{n+1, k})$ containing $N(\mathcal{U}_{n+1, k})^{(3)}$, it follows that $H_{2}\big(N(\mathcal{U}_{n+1, k})\big)=0$ for $k=4$ and $n\ge 6$, $k=5$ and $n\ge 5$, and $k\ge 6$ and $n \ge 4$.

To show that $H_{2}\big(N(\mathcal{U}_{n+1, k})\big)=0$ for $k\ge 7$ and $n=3$, note that interchanging the roles of the leaves and the particles yields a homotopy equivalence between $N(\mathcal{U}_{n+1, k})$ and $N(\mathcal{U}_{n+1, k})$.
Since $H_{2}\big(N(\mathcal{U}_{4, k})\big)=0$ for $k\ge 7$ by the above, it follows that $H_{2}\big(N(\mathcal{U}_{k, 4})\big)=0$ for $k\ge 7$.

This homotopy equivalence also proves that $H_{2}\big(N(\mathcal{U}_{k, 3})\big)=H_{2}\big(N(\mathcal{U}_{3, k})\big)$.
Since $N(\mathcal{U}_{3, k})$ is a connected simplicial complex and Lemma \ref{contract 1-cycles} proves that $H_{1}\big(N(\mathcal{U}_{3, k})\big)=0$ for $k\ge 5$, an Euler characteristic computation can be used to show that $H_{2}\big(N(\mathcal{U}_{3, k})\big)\cong \Z^{k^{3}-6k^{2}+8k-1}$ for $k\ge 5$.
It follows that if $k\ge 7$, then $H_{2}\big(N(\mathcal{U}_{k, 3})\big)\neq 0$.

\begin{figure}[h]
\centering
\begin{tikzpicture} \footnotesize
  \matrix (m) [matrix of math nodes, nodes in empty cells, nodes={minimum width=3ex, minimum height=5ex, outer sep=2ex}, column sep=3ex, row sep=3ex]{
 3    &  0&* &  *  & *& \\  
 2    &  0 &* &  *  & *&  \\          
 1    &  0  &0 & * & *& \\             
  0    &  \Z  &0 &  0  & *& \\       
 \quad\strut &  0  &  1  & 2  &3&\\}; 

\draw[thick] (m-1-1.east) -- (m-5-1.east) ;
\draw[thick] (m-5-1.north) -- (m-5-5.north east) ;
\end{tikzpicture}
\caption{The $E^{2}$-page of the Mayer--Vietoris spectral sequence for the cover $\mathcal{S}_{n+1,k}$ of $N(\mathcal{U}_{n, k+1})$ when $n$ is sufficiently large with respect to $k$.}
\label{MayerVietoris E2 of nerve}
\end{figure}
\end{proof}

\begin{remark}\label{wedge of spheres}
One can improve the result of Proposition \ref{nerve is a ball} by using the technique of Lemma \ref{second homology of nerve} to prove that if $n$ is large with respect to $k$, then $N(\mathcal{U}_{n+1, k})$ only has homology in degree $k-1$.
It is unclear what extra information this tells us about $H_{1}\big(F_{\bullet}(\Gamma_{k})\big)$, though the author believes that this is connected to the minimal length of a resolution of $H_{1}\big(F_{\bullet}(\Gamma_{k})\big)$ by free FI$_{k, o}$-modules.
\end{remark}

With this in mind we recall the statement of Theorem \ref{no new relations}, which we now prove.

\begin{T2}
  \thmtextone
\end{T2} 

\begin{proof}
The first page differential $d^{1}:E^{1}_{1,1}[\Gamma_{k}](n+1)\to E^{1}_{0,1}[\Gamma_{k}](n+1)$ of the Mayer--Vietoris spectral sequence for $F_{n+1}(\Gamma_{k})$ with respect to the cover $\mathcal{U}_{n+1,k}$ does not introduce new relations in homology.
Rather, this differential merely identifies classes in $H_{1}\big(F_{n+1}(\Gamma_{k})\big)$ where two different particles are fixed on two different edges of $\Gamma$, i.e., if $\alpha\in H_{1}\big(F_{n-1}(\Gamma_{k})\big)$ is a class on $n-1$ particles, then $\alpha$ induces an element $\tilde{\alpha}$ in $E^{1}_{1,1}[\Gamma_{k}](n+1)$ where we fix two additional particles $i_{0}$ and $i_{1}$ at different leaves of $\Gamma$.
It follows that $d^{1}(\tilde{\alpha})$ is the difference of classes in distinct copies of $H_{1}\big(F_{n}(\Gamma_{k})\big)\subset E^{1}_{0,1}[\Gamma_{k}](n+1)$ where there is an uncounted particle $i_{0}$ is fixed on a leaf $j_{0}$ in one copy of $F_{n}(\Gamma_{k})$ and a different uncounted particle $i_{1}$ is fixed on a different leaf $j_{1}$ in different copy of $F_{n}(\Gamma_{k})$.
It follows that this differential does not introduce relations in $H_{1}\big(F_{n+1}(\Gamma_{k})\big)$, so the only differential that could is $d^{2}:E^{2}_{2, 0}[\Gamma_{k}](n+1)\to E^{2}_{0,1}[\Gamma_{k}](n+1)$. 
Therefore, if we can show that $E^{2}_{0,1}[\Gamma_{k}](n+1)=0$, then all relations in $H_{1}\big(F_{n+1}(\Gamma_{k})\big)$ would arise from relations already described in the $E^{1}_{0,1}[\Gamma_{k}](n+1)$-entry, i.e., every relation in $H_{1}\big(F_{n+1}(\Gamma_{k})\big)$ can be described as a sum of relations among classes in $H_{1}\big(F_{n}(\Gamma_{k})\big)$ where an extra particle has been fixed at the end of the same edge in all of the involved classes.

By Lemma \ref{second homology of nerve}, the $E^{2}_{2, 0}[\Gamma_{k}](n+1)$-entry of the Mayer--Vietoris spectral sequence for $F_{n+1}(\Gamma_{k})$ is $0$ for $k=4$ and $n\ge 6$, for $k=5$ and $n\ge 5$, for $k=6$ and $n\ge 4$, and for $k\ge 7$ and $n\ge 3$.
Therefore, the image of the $d^{2}$-differential from this entry into $E^{2}_{0,1}[\Gamma_{k}](n+1)$ is $0$.

By Theorem \ref{rep stability for star}, we have that $H_{1}\big(F_{\bullet}(\Gamma_{k})\big)$ is finitely generated as an FI$_{k,o}$-module in degree $2$ for $k\ge 5$ and in degree $3$ for $k=4$.
Since $E^{2}_{2,0}[\Gamma_{k}](n+1)$ is $0$ for $n\ge 6$ and $k=4$, for $n\ge 5$ and $k=5$, for $n\ge 4$ and $k=6$, and for $n\ge 3$ and $k\ge 7$, the FI$_{k,o}$-module of relations is generated at most these degrees.
Moreover, $E^{2}_{2,0}[\Gamma_{k}](n+1)$ is finite dimensional being the second homology of finite cellular complex, so the FI$_{k,o}$ module of relations is also finitely generated.

To confirm that $H_{1}\big(F_{\bullet}(\Gamma_{k})\big)$ cannot be presented in degree less than $3$ for $k\ge 7$, note that $H_{1}\big(F_{1}(\Gamma_{k})\big)=0$ for all $k$ and $H_{1}\big(F_{2}(\Gamma_{k})\big)\neq 0$ for all $k\ge 3$.
For $k\ge 4$ there must relations among the images of the classes of $H_{1}\big(F_{2}(\Gamma_{k})\big)$ in $H_{1}\big(F_{3}(\Gamma_{k})\big)$ under the insertion maps as $E^{2}_{2, 0}[\Gamma_{k}](3)\neq 0$, so $H_{1}\big(F_{\bullet}(\Gamma_{k})\big)$ cannot be presented in degree less than $3$.
Therefore, for $k\ge 4$, the sequence of homology groups $H_{1}\big(F_{\bullet}(\Gamma_{k})\big)$ has the structure of a finitely presented FI$_{k,o}$-module, presented in degree at most $6$ for $k=4$, degree at most $5$ for $k=5$, degree at most $4$ for $k=6$,  and degree $3$ for $k\ge 7$.
\end{proof}

It follows that every class in $H_{1}\big(F_{n+1}(\Gamma_{k})\big)$ can be generated by classes on at most $4$ points, and that for $k\ge 4$ every relation in homology arises from relations on classes on at most $6$ points.

\begin{remark}
The proof of Theorem \ref{no new relations} proves that $H_{1}\big(F_{\bullet}(\Gamma_{k})\big)$ cannot be presented in degree $2$ for $k\ge 7$.
Similar arguments can be used to show that $H_{1}\big(F_{\bullet}(\Gamma_{4})\big)$ cannot be presented in degree $6$ and $H_{1}\big(F_{\bullet}(\Gamma_{6})\big)$ cannot be presented in degree $4$.
These arguments rely on the fact that $H_{2}\big(N(\mathcal{U}_{6,4})\big)\cong H_{2}\big(N(\mathcal{U}_{4,6})\big)\neq0$, which can be determined by calculating that $H_{2}\big(N(\mathcal{U}_{6,4});\Z/2\Z\big)\cong H_{2}\big(N(\mathcal{U}_{4,6});\Z/2\Z\big)\cong (\Z/2\Z)^{5}$ and applying the Universal Coefficients Theorem for homology.
Similar calculations show that $H_{2}\big(N(\mathcal{U}_{5,5});\Z/2\Z\big)=0$; since there is no reason to believe that this complex has torsion, this would imply that $H_{1}\big(F_{\bullet}(\Gamma_{5})\big)$ can be presented in degree $4$.
\footnote{These calculations were done in Macaulay2. For $k=4$ and $n+1=6$ and for $k=5$ and $n=5$, this was done with the SimplicialComplexes package by labeling the top dimensional simplices of $N(\mathcal{U}_{n+1,k})$ by $k$-tuples of elements, where the $(l+1)^{\text{th}}$-element of each $k$-tuple is drawn from the set $\{1+l(n+1), \dots, n+1+l(n+1)\}$, and all of the elements are distinct modulo $n+1$. The isomorphism between $N(\mathcal{U}_{6,4})$ and $N(\mathcal{U}_{4,6})$ that arises from interchanging the roles of the edges of $\Gamma$ and the particles, yields the $N(\mathcal{U}_{4,6})$ result.}
\end{remark}

In the next section we show that $H_{1}\big(F_{\bullet}(\Gamma_{3})\big)$ is not finitely presented as an FI$_{3,o}$-module.

\subsection{Relations for $k=3$}

In this subsection we discuss relations in $H_{1}\big(F_{\bullet}(\Gamma_{3})\big)$.
Unlike larger $k$, the $E^{2}_{2,0}[\Gamma_{3}](n+1)$-entry of the Mayer--Vietoris spectral sequence for $F_{n+1}(\Gamma_{3})$ is never $0$ for large $n$.
In fact, using Lemma \ref{contract 1-cycles}, the torsion-freeness of $H_{1}\big(F_{n}(\Gamma_{3})\big)$, and an Euler characteristic computation, one can check that $E^{2}_{2,0}[\Gamma_{3}](n+1)\cong \Z^{n^{3}-3n^{2}-n+2}$.
As such, if we limit ourselves to the minimal generators suggested by our spectral sequence, i.e., the generators arising from the nonzero $E^{2}_{1,0}[\Gamma_{3}](n+1)$-entry, we always need new relations on classes of $n+1$ particles that do not arise from relations on classes of $n$ particles.
This raises the question of whether $H_{1}\big(F_{\bullet}(\Gamma_{3})\big)$ is finitely presentable as an FI$_{3,o}$-module.
It could be the case that we have merely chosen a bad generating set and a different finite generating set would lead to a finite presentation.
We will prove that this is not the case, and that $H_{1}\big(F_{\bullet}(\Gamma_{3})\big)$ is not finitely presentable as an FI$_{3,o}$-module.
This is in stark contrast to the homology groups of the ordered configuration space of the other star graphs, which are finitely presentable as FI$_{k, o}$-modules, the homology groups of the unordered configuration space of a graph, which are finitely presentable as modules over $\Z[E]$ \cite[Theorem 1.1]{an2020edge}, and the homology groups of the ordered configuration space of a connected non-compact finite type manifold of dimension at least 2, which are are free as FI-modules \cite[Theorem 6.4.3]{church2015fi}.

We begin by proving that if we restrict ourselves to the generating set described by the $E^{1}_{0,1}[\Gamma_{3}]$-entry of the Mayer--Vietoris spectral sequence, then the kernel of map from the resulting free FI$_{3,o}$-module to $H_{1}\big(F_{\bullet}(\Gamma_{3})\big)$ is not finitely generated.

\begin{prop}\label{kernel is not finitely generated}
Let $E^{2}_{1,0}[\Gamma_{3}]$ denote the FB-module such that $E^{2}_{1,0}[\Gamma_{3}]_{n+1}$ corresponds to the homology generators of $H_{1}\big(F_{n+1}(\Gamma_{3})\big)$ given by the $E^{2}_{1,0}[\Gamma_{3}](n+1)$-entries of the Mayer--Vietoris spectral sequence.
The kernel of $M^{\text{FI}_{3,o}}\big(E^{2}_{1,0}[\Gamma_{3}]\big)\twoheadrightarrow H_{1}\big(F_{\bullet}(\Gamma_{3})\big)$ is not finitely generated.
\end{prop}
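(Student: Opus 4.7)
The plan is to argue by contradiction. Assume $K := \ker\bigl(M^{\text{FI}_{3,o}}(E^{2}_{1,0}[\Gamma_{3}]) \to H_{1}(F_{\bullet}(\Gamma_{3}))\bigr)$ is finitely generated, so $K$ is generated by some FB-submodule $V \subseteq K$ supported in degrees at most some finite $N$. My goal is to exhibit, for arbitrarily large $n$, an element of $K_{n}$ that cannot lie in the FI$_{3,o}$-span of $V$, producing a contradiction.

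The source of such elements is the $d^{2}$-differential of the Mayer--Vietoris spectral sequence for $F_{n}(\Gamma_{3})$ associated to the cover $\mathcal{U}_{3,n}$. A direct cell count shows that $N(\mathcal{U}_{3,n})$ has $3n$ vertices, $3n(n-1)$ edges, and $n(n-1)(n-2)$ triangles, with no higher cells since $\Gamma_{3}$ has only three edges. Combined with the vanishing $H_{1}(N(\mathcal{U}_{3,n}))=0$ for $n \ge 5$ from Lemma~\ref{contract 1-cycles} and the connectivity of the nerve, an Euler characteristic computation yields $E^{2}_{2,0}[\Gamma_{3}](n) = H_{2}(N(\mathcal{U}_{3,n})) \cong \Z^{n^{3}-6n^{2}+8n-1}$ for $n \ge 5$. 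The $d^{2}$-differential sends each such class to a nontrivial relation among the ``old'' classes in $H_{1}(F_{n}(\Gamma_{3}))$, i.e., those in the image of $E^{1}_{0,1}[\Gamma_{3}](n)$. Using Theorem~\ref{rep stability for star} to rewrite each such old class as an FI$_{3,o}$-image of a generator in $E^{2}_{1,0}[\Gamma_{3}]$ (supported in degrees at most $4$), each relation lifts to an element of $K_{n}$. In this way I produce a sub-FB-module $R \subseteq K$ whose degree-$n$ part has rank at least $n^{3}-6n^{2}+8n-1$.

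The remaining and main task is to show that $R$ cannot lie entirely in the FI$_{3,o}$-span of $V$ for $n$ sufficiently large. For this I would use a representation-theoretic comparison. By Proposition~\ref{size of a free FIOk} and the induced-representation formula stated immediately after it, the FI$_{3,o}$-span of $V$ at degree $n$ is a quotient of a direct sum of $S_{n}$-representations of the form $\text{Ind}^{S_{n}}_{S_{m}\times S_{a_{1}}\times S_{a_{2}}\times S_{a_{3}}} V_{m} \boxtimes \text{Reg}_{a_{1}} \boxtimes \text{Reg}_{a_{2}} \boxtimes \text{Reg}_{a_{3}}$ with $m \le N$ and $a_{1}+a_{2}+a_{3} = n-m$. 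Meanwhile, the natural $S_{n}$-action on $N(\mathcal{U}_{3,n})$ by permutation of particle labels endows $R_{n}$ with an $S_{n}$-module structure compatible with that on $K_{n}$. The goal is then to show that, as $n$ grows, the $S_{n}$-irreducible constituents of $R_{n}$ include components that do not appear in any of the induced representations above — a phenomenon in the spirit of the accumulation of ``too much representation content in high degree'' that drives the proof of Proposition~\ref{not Noetherian}.

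I expect this last step to be the main obstacle, since it requires enough control over the $S_{n}$-structure on $H_{2}(N(\mathcal{U}_{3,n}))$ to detect forbidden irreducibles. A plausible route is to exhibit a family of highly symmetric $H_{2}$-cycles in $N(\mathcal{U}_{3,n})$ — for instance, cycles supported on orbits of a large subgroup of $S_{n}$ whose $S_{n}$-spans are computable — and to verify by character calculations together with Frobenius reciprocity that the corresponding $S_{n}$-representations cannot occur in any induced representation from $S_{m}\times S_{a_{1}}\times S_{a_{2}}\times S_{a_{3}}$ with bounded $m$.
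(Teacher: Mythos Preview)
Your proof-by-contradiction setup matches the paper's, and you correctly identify the nonvanishing of $E^{2}_{2,0}[\Gamma_{3}](n)\cong\Z^{n^{3}-6n^{2}+8n-1}$ as the source of new relations. However, your main step is circular and cannot produce a contradiction as written.

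You assume $V$ generates $K$, then propose to exhibit elements of $R_{n}\subseteq K_{n}$ not in the FI$_{3,o}$-span of $V$. But under your own hypothesis the FI$_{3,o}$-span of $V$ at degree $n$ is all of $K_{n}$, and $R_{n}\subseteq K_{n}$, so every element of $R_{n}$ \emph{is} in that span. Equivalently, $K_{n}$ is a quotient of $M^{\text{FI}_{3,o}}(V)_{n}$, so every $S_{n}$-irreducible occurring in $R_{n}$ already occurs in $M^{\text{FI}_{3,o}}(V)_{n}$ with at least the same multiplicity. There is no ``forbidden irreducible'' to find, and the character/Frobenius-reciprocity argument you sketch cannot succeed.

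The paper sidesteps this by comparing $V$ not with $R$ but with a \emph{second} candidate generating set: the truncation $\bigoplus_{m\le N}E^{2}_{2,0}[\Gamma_{3}](m)$. Both $V$ and this truncation surject onto $K_{N}$; since every FI$_{3,o}$-morphism $[m]\to[n]$ with $m\le N\le n$ factors through $[N]$, the two have the \emph{same} image in $K_{n}$ for all $n\ge N$. Hence if $V$ generates $K$, so does the truncation. The contradiction then comes from showing that the truncation does \emph{not} generate $K$: the images under $d^{2}$ of classes in $E^{2}_{2,0}[\Gamma_{3}](n)$ for $n>N$ are genuinely new relations, because the pushforwards of relations from degrees $\le N$ are already absorbed into the $E^{2}_{0,1}[\Gamma_{3}](n)$-term of the spectral sequence and therefore cannot account for the nonzero image of $d^{2}$.

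The key idea you are missing is that the spectral sequence already \emph{separates} the two types of relations for you: relations inherited from fewer particles live in $E^{2}_{0,1}$, while $E^{2}_{2,0}$ records precisely the residual relations not of that form. No representation theory is needed---only the bookkeeping built into the Mayer--Vietoris filtration.
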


\begin{proof}

Let $K$ denote the kernel of the obvious map $M^{\text{FI}_{3,o}}\big(E^{2}_{1,0}[\Gamma_{3}]\big)\twoheadrightarrow H_{1}\big(F_{\bullet}(\Gamma_{3})\big)$, and let $E^{2}_{2,0}[\Gamma_{3}]$ denote the FB-module whose degree $(n+1)$-term is $E^{2}_{2,0}[\Gamma_{3}](n+1)$.
By the construction of the Mayer--Vietoris spectral  we have the following commutative diagram

\begin{center}
\begin{tikzcd}
            & {M^{\text{FI}_{3,o}}\big(E^{2}_{2,0}[\Gamma_{3}]\big)} \arrow[r] \arrow[d, two heads] & {M^{\text{FI}_{3,o}}\big(E^{2}_{1,0}[\Gamma_{3}]\big)} \arrow[r] \arrow[d] & H_{1}\big(F_{\bullet}(\Gamma_{3})\big) \arrow[r] \arrow[d] & 0 \\
0 \arrow[r] & K \arrow[r]                                  & {M^{\text{FI}_{3,o}}\big(E^{2}_{1,0}[\Gamma_{3}]\big)} \arrow[r]           & H_{1}\big(F_{\bullet}(\Gamma_{3})\big) \arrow[r]           & 0
\end{tikzcd}
\end{center}
where the right two vertical arrows are the identity, and the left vertical arrow is a surjection.

Let $V$ be an FB-module such that for some $N$ we have $V_{N}\neq 0$ and $V_{n+1}=0$ for all $n\ge N$ and such that the diagram

\begin{center}
\begin{tikzcd}
            & {M^{\text{FI}_{3,o}}(V)} \arrow[r] \arrow[d, two heads] & {M^{\text{FI}_{3,o}}\big(E^{2}_{1,0}[\Gamma_{3}]\big)} \arrow[r] \arrow[d] & H_{1}\big(F_{\bullet}(\Gamma_{3})\big) \arrow[r] \arrow[d] & 0 \\
0 \arrow[r] & K \arrow[r]                                  & {M^{\text{FI}_{3,o}}\big(E^{2}_{1,0}[\Gamma_{3}]\big)} \arrow[r]           & H_{1}\big(F_{\bullet}(\Gamma_{3})\big) \arrow[r]           & 0
\end{tikzcd}
\end{center}
commutes, i.e., $V$ is a generating set for the kernel that is non-zero in only finitely many degrees.

The images of $M^{\text{FI}_{3,o}}(V)_{N}$ and $\bigoplus_{m\le N}M^{\text{FI}_{3,o}}\big(E^{2}_{2,0}[\Gamma_{3}](m)\big)_{N}$ in $K_{N}$ are identical, being all of $K_{N}$.
It follows from the FI$_{3,o}$-module structures of $H_{1}\big(F_{\bullet}(\Gamma_{3})\big)$ and $M^{\text{FI}_{3,o}}\big(E^{2}_{1,0}[\Gamma_{3}]\big)$ that $K$ is the FI$_{3, o}$-module of relations in $H_{1}\big(F_{\bullet}(\Gamma_{3})\big)$, i.e., there are maps $K_{n}\to K_{n+1}$ that correspond to sending a relation on classes of $n$ particles to a relation on a class on $n+1$ particles by fixing a new particle on a leaf of $\Gamma_{3}$.
For sufficiently large $n$, we have that $M^{\text{FI}_{3,o}}(V)_{n}=K_{n}$ and $\bigoplus_{m\le N}M^{\text{FI}_{3,o}}\big(E^{2}_{2,0}[\Gamma_{3}](m)\big)_{n}\neq K_{n}$, but $M^{\text{FI}_{3,o}}(V)$ and $\bigoplus_{m\le N}M^{\text{FI}_{3,o}}\big(E^{2}_{2,0}[\Gamma_{3}](m)\big)$ must have the same image in $K_{n}$, since $V$ is generating and they have the same image at the generating degree of $V$.
This is a result of the fact $K_{n}$ that cannot be described by the relations arising from $\bigoplus_{m\le N}M^{\text{FI}_{3,o}}\big(E^{2}_{2,0}[\Gamma_{3}](m)\big)_{n}$ as these relations do not describe those coming from the $E^{2}_{2,0}[\Gamma_{3}](n)$-entry of the spectral sequence, since they are already taken into account in the $E^{2}_{0,1}[\Gamma_{3}](n)$-entry.
Thus, $M^{\text{FI}_{3,o}}(V)$ is not a generating set for $K$, so no finite generating set for the kernel of $M^{\text{FI}_{3,o}}\big(E^{2}_{2,0}[\Gamma_{3}]\big)\twoheadrightarrow H_{1}\big(F_{\bullet}(\Gamma_{3})\big)$ exists.
\end{proof}

\begin{remark}
Something more is probably true.
Calculations suggest that $M^{\text{FI}_{3,o}}\big(E^{2}_{2,0}[\Gamma_{3}]\big)$ \emph{is} the kernel of $M^{\text{FI}_{3,o}}\big(E^{2}_{1,0}[\Gamma_{3}]\big)\twoheadrightarrow H_{1}\big(F_{\bullet}(\Gamma_{3})\big)$, i.e., we have an exact sequence
\[
0\to M^{\text{FI}_{3,o}}\big(E^{2}_{2,0}[\Gamma_{3}]\big)\to M^{\text{FI}_{3,o}}\big(E^{2}_{1,0}[\Gamma_{3}]\big)\to H_{1}\big(F_{\bullet}(\Gamma_{3})\big)\to 0.
\]
The analogous statement for larger star graphs is not true, and for $k\ge 4$ there are relations among the relations arising from the $E^{2}_{2,0}[\Gamma_{k}](n+1)$-entry of the Mayer--Vietoris spectral sequence; see Remark \ref{wedge of spheres}.
This freeness among the relations of $H_{1}\big(F_{\bullet}(\Gamma_{3})\big)$ should correspond to the vanishing of the $E^{2}_{p,0}[\Gamma_{3}](n+1)$-entries of the spectral sequence for $p\ge 3$.
\end{remark}

Next, we prove that our choice of generators does not matter by showing that if we have a finite presentation of $H_{1}\big(F_{\bullet}(\Gamma_{3})\big)$, then every finite generating set is part of a finite presentation.

\begin{prop}\label{all presentations are the same}
Let 
\[
M^{\text{FI}_{3,o}}(V)\to M^{\text{FI}_{3,o}}(W)\to U\to 0
\]
be a finite presentation of an FI$_{3,o}$-module $U$, then given any finitely generated FB-module of generators $W'$ one can find a finitely generated FB-module of relations $V'$ such that
\[
M^{\text{FI}_{3,o}}(V')\to M^{\text{FI}_{3,o}}(W')\to U\to 0.
\]
is a finite presentation of $U$.
\end{prop}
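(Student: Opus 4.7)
The plan is to reduce the proposition to Schanuel's lemma in the abelian category of FI$_{3,o}$-modules. Write the given surjection as $\pi \colon M^{\text{FI}_{3,o}}(W) \twoheadrightarrow U$, and observe that its kernel $K := \ker \pi$ is finitely generated, being the image of $M^{\text{FI}_{3,o}}(V)$ in $M^{\text{FI}_{3,o}}(W)$. Write the new surjection as $\pi' \colon M^{\text{FI}_{3,o}}(W') \twoheadrightarrow U$ with kernel $K' := \ker \pi'$. The entire task reduces to showing that $K'$ is finitely generated, for then any finite FB-module $V'$ with a surjection $M^{\text{FI}_{3,o}}(V') \twoheadrightarrow K'$ produces the required presentation.

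The first step is to record that free FI$_{3,o}$-modules $M^{\text{FI}_{3,o}}(P)$ built from FB-projective $P$ (for instance, direct sums of regular representations $R[S_m]$) are themselves projective in the category of FI$_{3,o}$-modules. This is a formal consequence of the adjunction $\mathrm{Hom}_{\mathrm{FI}_{3,o}}(M^{\text{FI}_{3,o}}(P), N) \cong \mathrm{Hom}_{\mathrm{FB}}(P, N|_{\mathrm{FB}})$ together with the fact that the restriction functor to FB-modules is exact, as it is computed pointwise.

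The second step is the standard pullback argument underlying Schanuel's lemma. Form $X := M^{\text{FI}_{3,o}}(W) \times_{U} M^{\text{FI}_{3,o}}(W')$; the two projections give short exact sequences
\[
0 \to K \to X \to M^{\text{FI}_{3,o}}(W') \to 0 \quad\text{and}\quad 0 \to K' \to X \to M^{\text{FI}_{3,o}}(W) \to 0,
\]
both of which split by projectivity of their rightmost terms, yielding
\[
K \oplus M^{\text{FI}_{3,o}}(W') \;\cong\; X \;\cong\; K' \oplus M^{\text{FI}_{3,o}}(W).
\]
Because $K$ is finitely generated and $W'$ is a finite FB-module, the left-hand side is finitely generated; hence so is its counterpart $K' \oplus M^{\text{FI}_{3,o}}(W)$, and therefore $K'$ as a direct summand. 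A finite FB-generating set for $K'$ then defines $V'$.

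The main subtlety is the projectivity claim, since \emph{a priori} $M^{\text{FI}_{3,o}}(W')$ need not be projective when $W'$ is not FB-projective. The workaround is to first choose a finitely generated FB-projective cover $P \twoheadrightarrow W'$ (which exists since one may cover each $R[S_m]$-module $W'_m$ by a free one), run the pullback argument with $P$ in place of $W'$ to conclude that $\ker(M^{\text{FI}_{3,o}}(P) \to U)$ is finitely generated, and then observe by the snake lemma that this kernel surjects onto $K'$, transporting finite generation. Once this formality is in place the rest is purely formal.
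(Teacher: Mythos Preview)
Your argument is correct and shares the paper's overall strategy: reduce to showing that $K' = \ker\big(M^{\text{FI}_{3,o}}(W') \to U\big)$ is finitely generated, using projectivity of free FI$_{3,o}$-modules. The homological tools differ slightly. You invoke Schanuel's lemma via the fiber product, obtaining $K \oplus M^{\text{FI}_{3,o}}(W') \cong K' \oplus M^{\text{FI}_{3,o}}(W)$; the paper instead lifts the identity on $U$ (using projectivity of $M^{\text{FI}_{3,o}}(W)$, for which it cites \cite[Proposition~8.4.3]{sam2012introduction}) to a map $f\colon M^{\text{FI}_{3,o}}(W) \to M^{\text{FI}_{3,o}}(W')$, restricts to $\tilde f\colon K \to K'$, and applies the snake lemma to obtain $\mathrm{Coker}\,\tilde f \cong \mathrm{Coker}\,f$, so that $K'$ sits in a short exact sequence between the finitely generated modules $\mathrm{Im}\,\tilde f$ and $\mathrm{Coker}\,f$. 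These are essentially interchangeable standard arguments. One small point: your Schanuel step requires \emph{both} $M^{\text{FI}_{3,o}}(W)$ and $M^{\text{FI}_{3,o}}(W')$ to be projective, whereas the paper's lifting uses only the former; your workaround of replacing $W'$ by an FB-projective cover $P$ is sound, but the same caveat applies to $W$, which you should also cover (or simply observe that one may take $W$ to be FB-free from the outset without loss of generality). The paper sidesteps the issue by citing projectivity directly, so your explicit attention to when $M^{\text{FI}_{3,o}}(-)$ is projective is, if anything, a gain in rigor.
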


\begin{proof}
We may replace $M^{\text{FI}_{3,o}}(V)$ with $K$, the kernel of $M^{\text{FI}_{3,o}}(W)\twoheadrightarrow U$.
By definition, $K$ is finitely generated by $V$, and $K$ fits into the short exact sequence of FI$_{3,o}$-modules
\[
0\to K\to M^{\text{FI}_{3,o}}(W)\to U\to 0.
\]

Given $W'$ another finitely generated module of generators for $U$, let $K'$ be the kernel of $M^{\text{FI}_{3,o}}(W')\twoheadrightarrow U$, i.e., we have a short exact sequence of FI$_{3,o}$-modules
\[
0\to K'\to M^{\text{FI}_{3,o}}(W')\to U\to 0.
\]

Since $M^{\text{FI}_{3,o}}(W)$ and $M^{\text{FI}_{3,o}}(W')$ are free FI$_{3,o}$-modules we can lift the identity map $U\to U$ to a map $f:M^{\text{FI}_{3,o}}(W)\to M^{\text{FI}_{3,o}}(W')$, and we can restrict this to a map $\tilde{f}:K\to K'$ to make the following commutative diagram

\begin{center}
\begin{tikzcd}
0 \arrow[r] & K \arrow[r] \arrow[d, "\tilde{f}"] & {M^{\text{FI}_{3,o}}(W)} \arrow[r] \arrow[d, "f"] & U \arrow[r] \arrow[d] & 0 \\
0 \arrow[r] & K' \arrow[r]                       & {M^{\text{FI}_{3,o}}(W')} \arrow[r]               & U \arrow[r]                                & 0

\end{tikzcd}
\end{center}
where the right most vertical arrow is the identity.

By the snake lemma, there is an isomorphism of FI$_{3,o}$-modules $\text{Coker}\tilde{f}\cong \text{Coker}f$, so we have a short exact sequence of FI$_{3,o}$-modules
\[
0\to \text{Im}\tilde{f}\to K'\to \text{Coker}f\to 0.
\]
Since $\text{Im}\tilde{f}$ and $\text{Coker}f$ are finitely generated FI$_{3,o}$-modules, $K'$ must be finitely generated as well.
\end{proof}

Since the module of relations for the generating set coming from the Mayer--Vietoris spectral sequence is not finitely generated as it has terms in infinitely many degrees, it follows that $H_{1}\big(F_{\bullet}(\Gamma_{k})\big)$ is not a finitely presentable FI$_{3,o}$-module.

\begin{T3}
  \thmtexttwo
\end{T3}

\begin{proof}
By Proposition \ref{kernel is not finitely generated}, Corollary \ref{E210 is 0}, and Euler characteristic considerations, we have found a presentation for $H_{1}\big(F_{\bullet}(\Gamma_{k})\big)$ with finitely many generators whose kernel cannot be finitely generated. 
Therefore, Proposition \ref{all presentations are the same} proves that we cannot find a finite presentation of $H_{1}\big(F_{\bullet}(\Gamma_{k})\big)$ as an FI$_{3,o}$-module.
\end{proof}

One may interpret Theorem \ref{infinite presentation for 3} as implying that the ordered configuration space of the star graph $\Gamma_{3}$ is significantly more complex than the ordered configuration spaces of the other star graphs.
Moreover, Theorem \ref{infinite presentation for 3} proves that one cannot find a finite universal presentation for the homology of the ordered configuration space of graphs.

\section{Appendix}\label{appendix}

In this auxiliary section we include illustrations of generators for the $E^{2}_{1,0}[\Gamma_{k}]$-terms of the Mayer--Vietoris spectral sequence for $H_{1}\big(F_{\bullet}(\Gamma_{k})\big)$ when $k=3,4$.
These generators are not minimal as there will be relations among them, but they are sufficient.
We hope that these pictures can serve as inspiration for understanding the generators of the homology of the ordered configuration space of more complex graphs.

For more than $2$ particles we take the convention of always having half-stars, see \cite{an2020edge, an2022asymptotic}, as moving counterclockwise.
This is an arbitrary choice that leads to the symmetric group span of the classes depicted being larger than needed. 
The only way to remedy this would be to take averages over all possible orientations of the half-stars when the number of particles is greater than $2$, something that is not possible with integer coefficients.

\subsection{$\Gamma_{3}$}

Sections \ref{rep stab section} and \ref{presentability} show that $\Gamma_{3}$ has the most complex ordered configuration space of all the star graphs.
In our proof of Theorem \ref{rep stability for star}, we found a generator for $H_{1}\big(F_{\bullet}(\Gamma_{3})\big)$ in degree $4$.
A careful examination of the Mayer--Vietoris spectral sequence proves that there are generators in degrees $2$ and $3$ as well.
In this subsection we provide pictures of these generators.

\subsubsection{$n=2$}\label{star class on 2 particles}
When $n=2$, we have that $H_{1}\big(F_{2}(\Gamma_{3})\big)\cong \Z$, which is generated by a single class, a representative of which is depicted in Figure \ref{NewCyclesF2G3}.
The symmetric group $S_{2}$ acts trivially on this class, which is the ordered analogue of the star classes of \cite{an2020edge, an2022asymptotic}.
Unlike its unordered counterparts, the image of the free FI$_{3,o}$-module generated by this class is not all of $H_{1}\big(F_{\bullet}(\Gamma_{3})\big)$.
In fact, one can show that asymptotically that these ordered star classes only account for only half of $H_{1}\big(F_{n}(\Gamma_{3})\big)$.

\begin{figure}[h]
\centering
\captionsetup{width=.8\linewidth}
\includegraphics[width = 6cm]{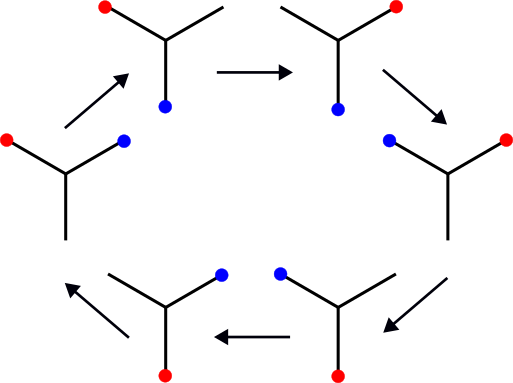}
\caption{The sole class of $H_{1}\big(F_{2}(\Gamma_{3})\big)$ generating $E^{2}_{1,0}[\Gamma_{3}](2)$.
}
\label{NewCyclesF2G3}
\end{figure}

\subsubsection{$n=3$}

When $n=3$, we have that $E^{2}_{1,0}[\Gamma_{3}](3)\cong \Z^{4}$.
By \cite[Theorem 3.3]{lu14} upon taking rational coefficients, this decomposes as two copies of the alternating representation of $S_{3}$ and one copy of the standard representation of $S_{3}$. 
In Figure \ref{NewCycles2F3G3} we demonstrate a class whose orbit, after accounting for our choice of orientation of the half-stars corresponding to the edges of the hexagon, under the $S_{3}$-action arising from the symmetries of the labeling of the particles generates two copies of the alternating representation.
Moreover, any one of the three cycles generated by this action can be written as a sum of the two others.
The non-zero $E^{2}_{2,0}[\Gamma_{3}](4)$-entry of the Mayer--Vietoris spectral sequence corresponds to a relation between the $24$ classes in $H_{1}\big(F_{4}(\Gamma_{3})\big)$ that arise from applying all three insertion maps to the classes in Figure \ref{NewCycles2F3G3}.
In Figure \ref{NewCycles1F3G3} we demonstrate a generator of the standard representation, again needing to factor in for the fact we have made choice to orient our half-stars counterclockwise on each edge of the hexagon.

Note that in the cycle of Figure \ref{NewCycles2F3G3}, in the movements of particles corresponding to sides of the hexagon there is never a particle fixed on the bottom leaf of $\Gamma_{3}$ and all particles are involved in the movements on at least one side of the hexagon.
In the cycle of Figure \ref{NewCycles1F3G3}, in each movement corresponding to a side of the hexagon the blue particle is never fixed, whereas every leaf is involved at least once.
This can be interpreted via the duality between edges and particles; if we consider the $S_{3}$-action arising from the symmetries of $\Gamma_{3}$, the cycle of Figure \ref{NewCycles2F3G3} generates a copy of the alternating representation, and the cycle of Figure \ref{NewCycles1F3G3} generates the standard representation.

\begin{figure}[h]
\centering
\captionsetup{width=.8\linewidth}
\includegraphics[width = 10cm]{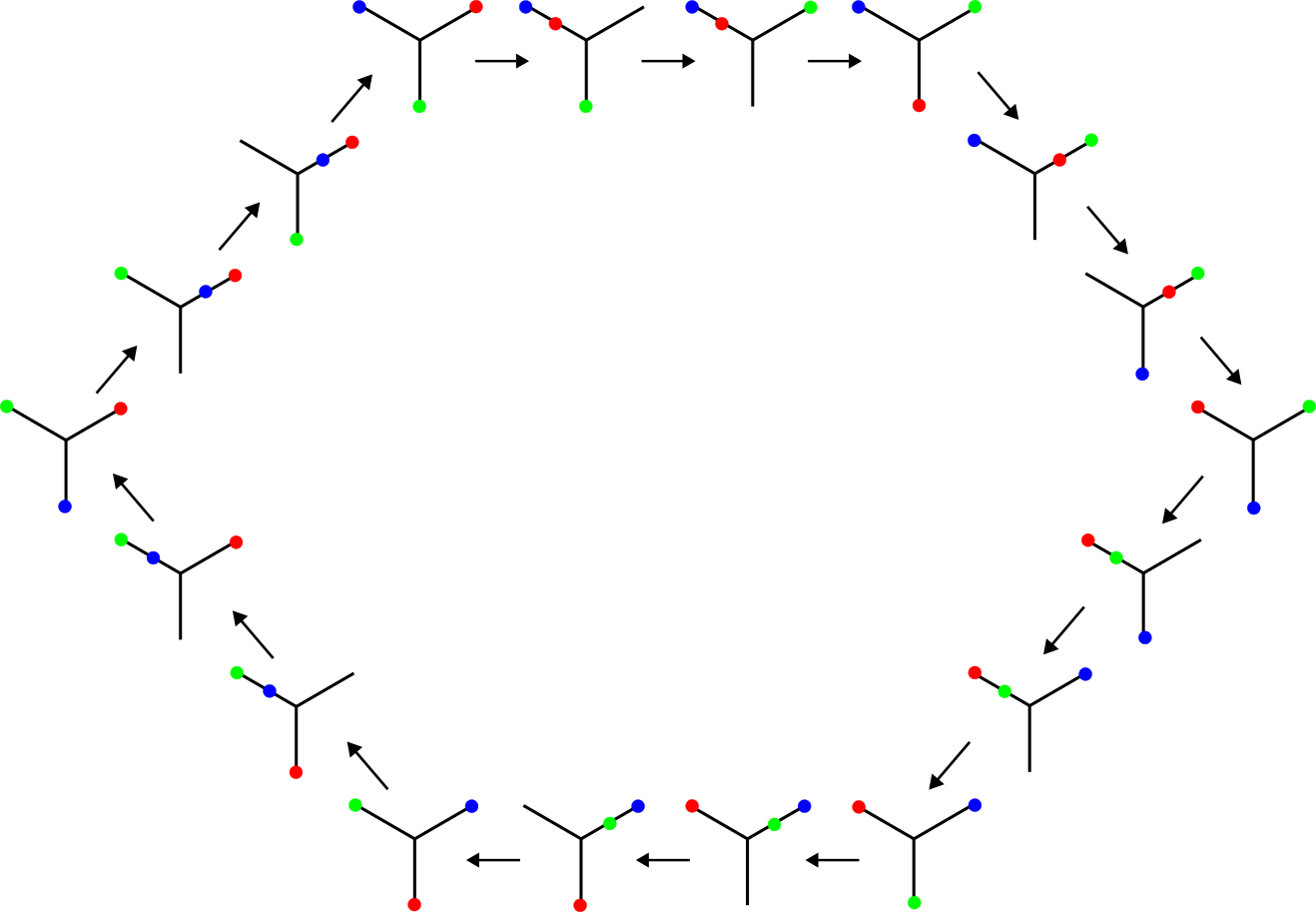}
\caption{A class in $H_{1}\big(F_{3}(\Gamma_{3})\big)$ whose $S_{3}$-orbit generates half of $E^{2}_{1,0}[\Gamma_{3}](4)$, corresponding to the two copies of the alternating representation.
}
\label{NewCycles2F3G3}
\end{figure}

\begin{figure}[H]
\centering
\captionsetup{width=.8\linewidth}
\includegraphics[width = 10cm]{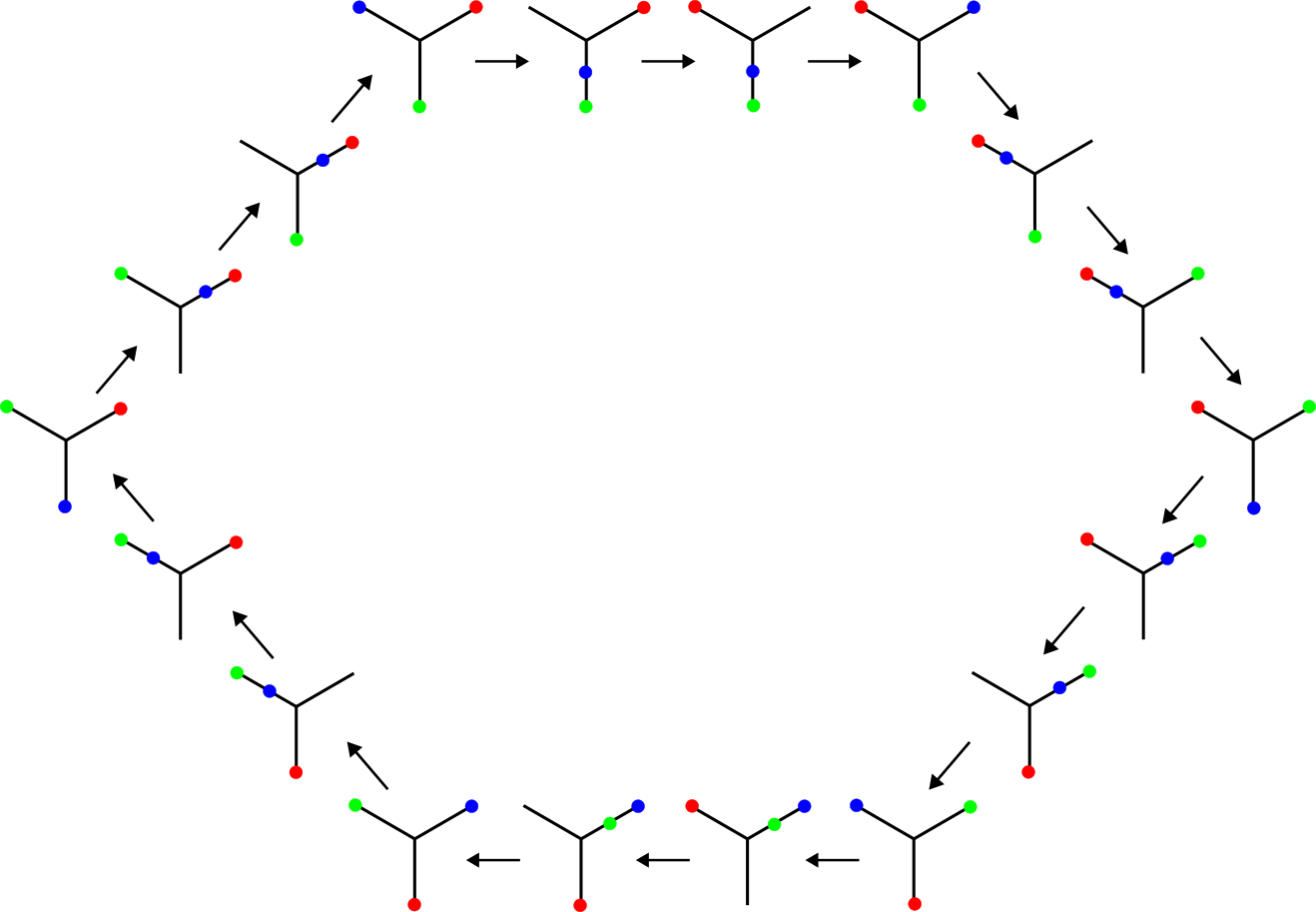}
\caption{A class in $H_{1}\big(F_{3}(\Gamma_{3})\big)$ whose $S_{3}$-orbit generates the other half of $E^{2}_{1,0}[\Gamma_{3}](4)$ corresponding to the standard representation.
}
\label{NewCycles1F3G3}
\end{figure}

\subsubsection{$n=4$}

When $n=4$, we have that $E^{2}_{1, 0}[\Gamma_{3}](4)\cong \Z^{2}$.
We demonstrate two generators for these classes in Figure \ref{NewCyclesF4G3}.
After taking rational coefficients and quotienting by the classes that come from our insertion maps, these classes yield the irreducible $2$-dimension representation of $S_{4}$.

\begin{figure}[h]
\centering
\captionsetup{width=.8\linewidth}
\includegraphics[width = 16cm]{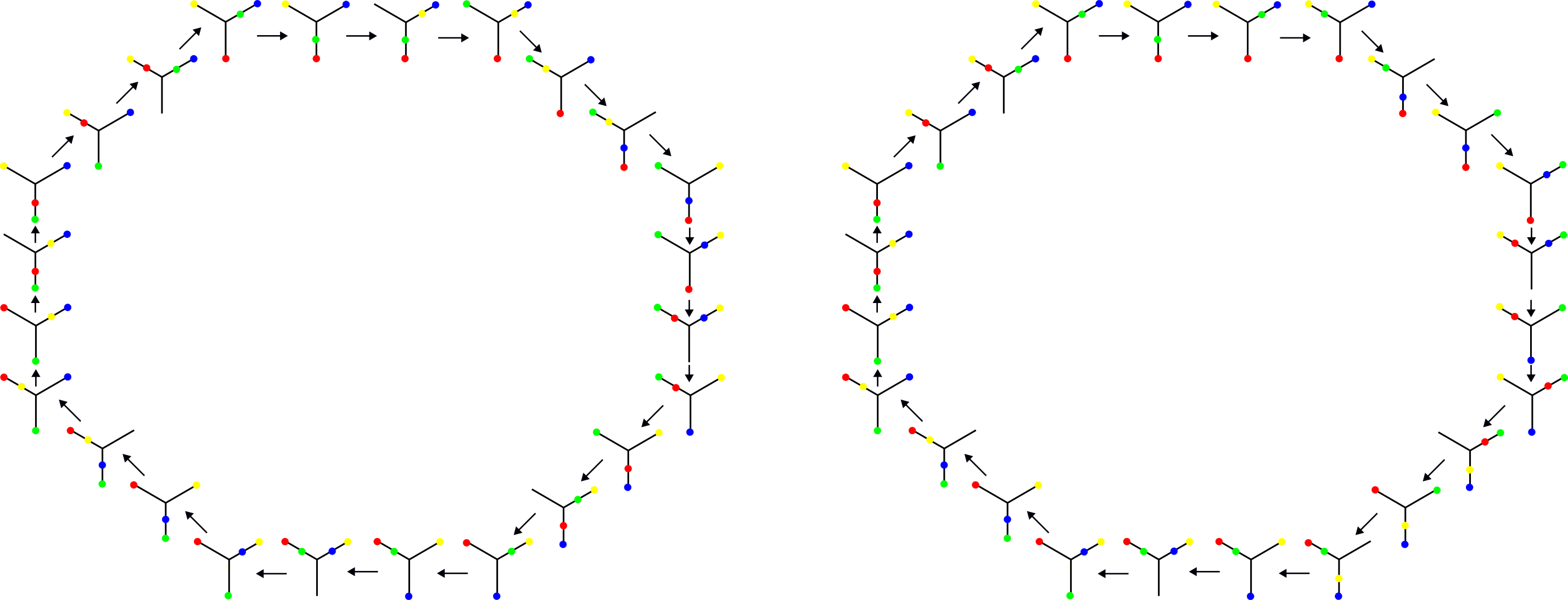}
\caption{Two classes in $H_{1}\big(F_{4}(\Gamma_{3})\big)$ generating $E^{2}_{1,0}[\Gamma_{3}](4)$.
}
\label{NewCyclesF4G3}
\end{figure}

\subsection{$\Gamma_{4}$}

In this subsection we provide pictures of the $E^{2}_{1, 0}[\Gamma_{k}]$-generators of $H_{1}\big(F_{\bullet}(\Gamma_{4})\big)$.
As the proof of Theorem \ref{rep stability for star} showed, there are generators in degrees $2$ and $3$.
For $k\ge 5$, the degree $2$-generators for $H_{1}\big(F_{\bullet}(\Gamma_{4})\big)$ yield generators for $H_{1}\big(F_{\bullet}(\Gamma_{k})\big)$ arising from the inclusions of $\Gamma_{4}\hookrightarrow \Gamma_{k}$.
This is also true in the unordered case.
However, in that case, the degree $2$-generators for $H_{1}\big(C_{\bullet}(\Gamma_{4})\big)$ arise from the degree $2$-generators for $H_{1}\big(C_{\bullet}(\Gamma_{3})\big)$, something that does not hold in the ordered case.

\subsubsection{$n=2$}

When $n=2$, we have that $E^{2}_{1, 0}[\Gamma_{4}](2)\cong H_{1}\big(F_{2}(\Gamma_{4})\big)\cong \Z^{5}$.
In Figure \ref{NewCyclesF2G4} we demonstrate one generator for this space.
Note that in this generator the blue particle only moves along two of the edges of $\Gamma_{4}$ and the red particle only moves along the other two.
There are $\binom{4}{2}=6$ ways to choose pairs of edges, corresponding to $6$ classes in $H_{1}\big(F_{2}(\Gamma_{4})\big)$; any $5$ of these classes generate $E^{2}_{1, 0}[\Gamma_{4}](2)$, with a signed sum of all $6$ of them being trivial.
By \cite[Theorem 3.3]{lu14}, the rational homology of $F_{2}(\Gamma_{4})$ decomposes as 2 copies of the alternating representation and 3 copies of the trivial representation.
Additionally, there is a relation between the $60$ images in $H_{1}\big(F_{3}(\Gamma_{4})\big)$ of any $5$ of these generators.
This relation corresponds to the nonzero $E^{2}_{2,0}[\Gamma_{4}](3)$-entry of the Mayer--Vietoris spectral sequence.

\begin{figure}[h]
\centering
\captionsetup{width=.8\linewidth}
\includegraphics[width = 4cm]{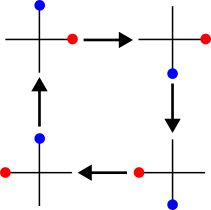}
\caption{A class in $H_{1}\big(F_{2}(\Gamma_{4})\big)$.
There are six such classes corresponding to how we pick a pair of edges for the blue particle to travel on.
}
\label{NewCyclesF2G4}
\end{figure}

\subsubsection{$n=3$}

When $n=3$, we have that $E^{2}_{1,0}[\Gamma_{4}](3)\cong \Z^{2}$, and in Figure \ref{NewCyclesF3G4} we demonstrate two generators for these classes.
After taking rational coefficients and quotienting by the classes that come from our insertion maps, these generators span two copies of the alternating representation of $S_{3}$.
Interchanging the roles of the edges of $\Gamma$ and the particles yields a duality between these classes and those of Figure \ref{NewCyclesF4G3}.

\begin{figure}[h]
\centering
\captionsetup{width=.8\linewidth}
\includegraphics[width = 12cm]{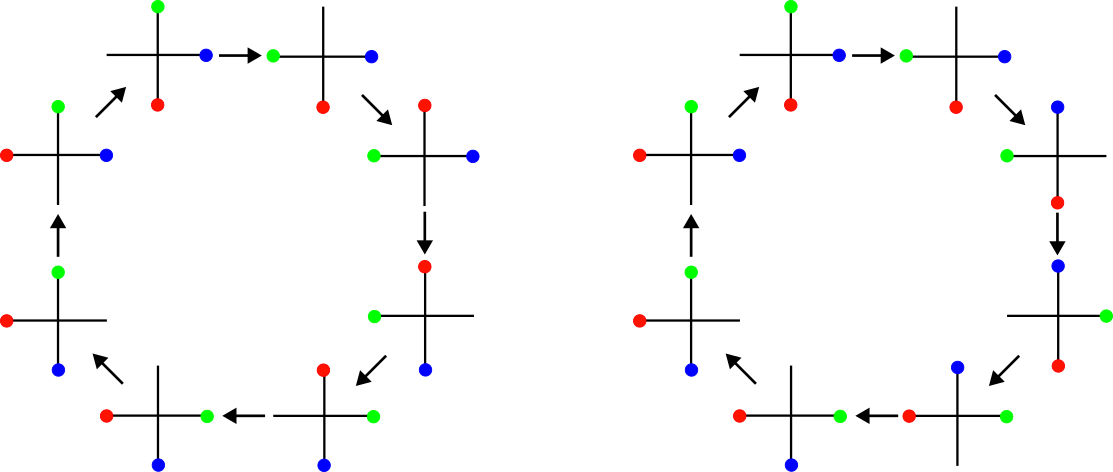}
\caption{Two classes in $H_{1}\big(F_{3}(\Gamma_{4})\big)$ generating $E^{2}_{1,0}[\Gamma_{4}](3)$.
}
\label{NewCyclesF3G4}
\end{figure}

\bibliographystyle{amsalpha}
\bibliography{RepStabForStarGraphs}

\end{document}